\newtheorem{theorem}{Theorem}[section]
\newtheorem{corollary}[theorem]{Corollary}
\newtheorem{proposition}[theorem]{Proposition}
\newtheorem{lemma}[theorem]{Lemma}
\theoremstyle{definition}
\newtheorem{remark}[theorem]{Remark}
\newtheorem{definition}[theorem]{Definition}
\newcommand{\eqdef}{\;{:=}\;}
\newcommand\PSL{\operatorname{PSL}}
\def\R{\mathbb R}
\def\C{\mathbb C}
\def\Q{\mathbb Q}
\def\bigmid{\ \rule[-3.5mm]{0.1mm}{9mm}\ }
\newcommand{\CC}{{\mathbb C}}
\newcommand{\RR}{{\mathbb R}}
\begin{document}

\title{New obstructions to symplectic embeddings}

\author{R. Hind \and E. Kerman}

\date{\today}

\maketitle

\begin{abstract}
In this paper we establish new restrictions on the symplectic embeddings of basic shapes in symplectic vector spaces.
By refining an  embedding technique due to Guth, we also show that they are sharp.
\end{abstract}

\section{Introduction and main results}

Consider $\R^{2n}$ equipped with coordinates $(x_1, \ldots, x_n, y_1, \ldots, y_n)$ and its standard symplectic
form $\omega_0 = \sum_{i=1}^n dx_i \wedge dy_i.$
Let $B^{2k}(R)$ denote the closed ball of radius $R$ in $\RR^{2k}$.  Gromov's Nonsqueezing Theorem states
that there is no symplectic embedding of $B^{2n}(1)$ into $B^{2}(R) \times \R^{2(n-1)}$ for $R<1$, \cite{gr}. We are interested here in intermediate
nonsqueezing  phenomena for domains with a large factor. Accordingly, we will assume throughout that  $n \geq 3$. The basic motivating  problem
for this work, which remains open, is the following.

\medskip
\noindent{\bf Question 1.}
{\em What, if any, is the smallest value of $R>0$ such that there exists a symplectic embedding of $B^2(1) \times \RR^{2(n-1)}$ into $B^4(R) \times \RR^{2(n-2)}$? }
\medskip

\noindent  Prior to the current paper, the most that could be said is that if such an emebedding exists, then $R$ must be at least $\sqrt{2}$. This bound is implied by the second Ekeland-Hofer capacity from \cite{ekehof}.

In \cite{guth}, L. Guth constructs new symplectic embeddings of polydiscs which represent a major breakthrough in our understanding of the following
{\em bounded} version of Question 1.

\medskip
\noindent{\bf Question 2.}
{\em What, if any, is the smallest value of $R>0$ such that there exists a symplectic embedding of $B^2(1) \times B^{2(n-1)}(S)$ into $B^4(R) \times \RR^{2(n-2)}$  for arbitrarily large $S>0$?}
\medskip

\noindent Among other things, Guth's work settles the existence issue here. \footnote{Before Guth's work it was commonly thought that
the answer to Questions 1 and 2 was that  no such $R$ exists.}
In the setting of Question 2, the second Ekeland-Hofer capacity again implies that $R$ must be at least  $\sqrt{2}$.
The following improvement of this bound is the main result of this paper.

\begin{theorem}
\label{thm} For any $0<R< \sqrt{3}$ there are no symplectic embeddings of
$B^2(1) \times B^{2(n-1)}(S)$ into $B^4(R) \times
\RR^{2(n-2)}$ when $S$ is sufficiently large.
\end{theorem}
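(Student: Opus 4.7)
The approach I would take is to argue by contradiction via a neck-stretching argument for pseudoholomorphic curves, since the Ekeland--Hofer bound of $\sqrt{2}$ already exhausts what action-spectrum-based capacities can detect. Assume, for a contradiction, that for some $R<\sqrt{3}$ and arbitrarily large $S$ there is a symplectic embedding $\phi\colon B^2(1) \times B^{2(n-1)}(S) \hookrightarrow B^4(R) \times \R^{2(n-2)}$. The goal is to produce, on the target side, a moduli space of $J$-holomorphic spheres of total symplectic area $\pi R^2$ that is non-empty for topological reasons, and then to stretch the neck along the image of $\phi$ so that SFT compactness forces the piece sitting inside $\phi$ to be asymptotic to Reeb orbits of total action at least $3\pi$.

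For the first step, I would compactify the target: symplectically embed $B^4(R) \times (\text{a large box in }\R^{2(n-2)})$ into $\CC P^2 \times M$, where $\CC P^2$ carries the Fubini--Study form of total area $\pi R^2$ and $M$ is a closed symplectic manifold large enough to contain $\phi(B^2(1) \times B^{2(n-1)}(S))$ in a product chart. This makes the relative homology class ``line in $\CC P^2$ times a point'' well-defined and ensures that holomorphic representatives have area exactly $\pi R^2$, with genus-zero Gromov--Witten count equal to $1$ under a generic point constraint.

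Next, I would fix a distinguished point $p_0 = \phi(q_0)$ in the image of $\phi$ and impose a point-and-tangency constraint at $p_0$: the curve must pass through $p_0$ and be tangent to order $k$ to a local codimension-two $J$-complex submanifold through $p_0$, with $k$ chosen so that the expected dimension of the constrained moduli space is zero while the count remains $1$ for a standard product almost complex structure. I would then deform to an almost complex structure $J$ that is cylindrical in a collar of $\phi(\partial(B^2(1) \times B^{2(n-1)}(S)))$ and stretch the neck. By SFT compactness, the unique curve persists as a holomorphic building: the component containing $p_0$ sits in the interior of $\phi$ and absorbs the tangency condition, while its complement lives outside the embedding and, via intermediate levels, in the symplectization of the boundary.

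The main obstacle is the last, quantitative step: controlling the asymptotic multiplicities of the interior component. The short Reeb orbits on $\partial(B^2(1) \times B^{2(n-1)}(S))$, for $S$ large, are the iterates of the boundary circle of the $B^2(1)$-factor, with actions $k\pi$. One must show, by matching the Fredholm index of a plane in $B^2(1) \times B^{2(n-1)}(S)$ asymptotic to such a $k$-fold cover against the Conley--Zehnder indices of these orbits and the codimension of the jet-tangency constraint at $p_0$, that the total multiplicity of all positive asymptotic ends of the interior component is at least $3$. The total interior action is then at least $3\pi$, and since it is bounded above by the total area $\pi R^2$, we conclude $R \geq \sqrt{3}$, a contradiction. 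The delicate point is ruling out low-multiplicity breakings together with the action drift caused by trivially covered cylinders in the symplectization, and it is precisely the choice of tangency order, together with the product structure of $J$, that rigidifies the configuration enough to force the improvement from $\sqrt{2}$ to $\sqrt{3}$.
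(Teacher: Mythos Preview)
Your outline has the right overall shape---compactify, stretch the neck, and read off an area--action inequality from the resulting building---but the specific implementation you propose cannot reach $\sqrt{3}$. The arithmetic already fails: you work with the class of a line in $\CC P^2(R)$, so the total symplectic area of your curve is $\pi R^2$. The interior piece of any limiting building has symplectic area equal to the total action of its positive ends, and this area is bounded above by the area of the whole curve. Hence the interior action can never exceed $\pi R^2$, and for $R<\sqrt{3}$ it is strictly less than $3\pi$. There is therefore no way to force ``total multiplicity at least $3$'' on a line; with a degree-one curve the best your index/tangency bookkeeping can ever give is multiplicity $2$ (the tangency order that makes the moduli space zero-dimensional for a line is $2$, not $3$), and that only recovers the Ekeland--Hofer bound $R\ge\sqrt{2}$. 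Increasing the tangency order to $3$ makes the moduli space of lines have negative virtual dimension, so it is generically empty and there is nothing to stretch.

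The paper's proof gets around exactly this obstacle by using curves of every degree $d$, not just $d=1$. One fixes $3d-1$ point constraints inside a thin ellipsoid $E=E(1/S,1)\subset\CC P^2(R)$ (rather than a polydisc boundary, which is not smooth), and stretches along $\partial E$. The main technical result (Theorem~\ref{key}) is that for suitable $J$ the limit contains a single finite-energy plane of degree $d$ in $(\CC P^2\smallsetminus E)^\infty_-$ whose unique negative end covers the short Reeb orbit $\gamma_1$ exactly $3d-1$ times; proving this single-end statement is the heart of the paper and occupies most of Section~2. Positivity of area of that plane then gives $d\pi R^2>(3d-1)\pi$, i.e.\ $R^2>(3d-1)/d$, and letting $d\to\infty$ yields $R\ge\sqrt{3}$. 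A separate cobordism argument (Section~3) transports this four-dimensional existence result to $(\CC P^2(R)\times\R^{2(n-2)})\smallsetminus\phi(E(1,S,\dots,S))$, which is what actually gives Theorem~\ref{thm2} and hence Theorem~\ref{thm}. So the missing idea in your proposal is precisely the use of high-degree curves and the nontrivial work needed to control the combinatorics of their limits.
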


In fact we prove a slightly stronger result.  For convenience, identify $\RR^{2n}$ with $\CC^n$ using
the complex coordinates $z_j = x_j +iy_j$. Let
$$E(1,S, \dots S)=\left\{(z_1, \ldots z_n) \in \CC^n \bigmid |z_1|^2  + \frac{|z_2|^2}{S^2} + \ldots +\frac{|z_n|^2}{S^2} \leq1 \right\}$$
and denote by $\CC P^2(R)$  the complex projective plane
equipped with the symplectic form $R^2\omega_{FS}$ where  $\omega_{FS}$ is the standard Fubini-Study symplectic form.

\begin{theorem}
\label{thm2} For any $0<R< \sqrt{3}$ there are no symplectic embeddings of
$E(1,S, \dots S)$ into $\CC P^2(R)
\times \RR^{2(n-2)}$ when $S$ is sufficiently large.
\end{theorem}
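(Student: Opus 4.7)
The plan is to apply symplectic field theory to pseudoholomorphic curves in the target and derive a contradiction via an area--action comparison in the neck-stretched limit. Suppose for contradiction that such a symplectic embedding $\phi$ exists with $R<\sqrt{3}$ and $S$ arbitrarily large. I first compactify the $\RR^{2(n-2)}$ factor to a closed symplectic manifold $M$ whose relevant diameter is much larger than $S$, so that the image of $\phi$ and all the holomorphic curves to be considered may be assumed to lie inside the closed symplectic target $X \Def \CC P^2(R) \times M$. I then fix a compatible almost complex structure $J$ on $X$ which is the standard split structure in a neighbourhood of $\phi(\partial E(1,S,\ldots,S))$.

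The heart of the argument is to single out a moduli space $\M$ of $J$-holomorphic spheres in the line class $[\CC P^1 \times \mathrm{pt}]\in H_2(X;\ZZ)$, subject to constraints chosen so that (i) $\M$ has expected dimension zero and is non-empty by a topological count, and (ii) after stretching the neck along $\phi(\partial E(1,S,\ldots,S))$ via a one-parameter family $J_t$, the SFT compactness theorem forces the limit building to contain a plane in the completion $\widehat E(1,S,\ldots,S)$ whose positive asymptotic orbit has multiplicity at least $3$ along the short $z_1$-Reeb direction. Concretely, I would impose a suitable higher order tangency condition at a point $p\in\phi(E(1,S,\ldots,S))$, in the spirit of the local constraints used to produce higher symplectic capacities. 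After stretching, the limit decomposes into pieces in $\widehat E(1,S,\ldots,S)$, in the symplectisation $\RR\times\partial E(1,S,\ldots,S)$, and in the completion of $X\setminus\phi(E(1,S,\ldots,S))$.

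Taking $S$ arbitrarily large, every Reeb orbit on $\partial E(1,S,\ldots,S)$ of action less than $\pi S^2$ is an iterate $\gamma_1^k$ of the short $z_1$-orbit, of action $k\pi$, and these are the only orbits that can appear as asymptotes of the building. Positivity of area for the pieces in the completion of $X\setminus\phi(E)$ bounds their total symplectic area above by $\pi R^2 - \sum_i k_i\pi$, where the $k_i$ are the multiplicities of their negative asymptotes; nonnegativity therefore forces $\sum_i k_i \leq R^2 < 3$. On the other hand, the construction in (ii) guarantees that some $k_i \geq 3$, producing the contradiction $3 \leq \sum_i k_i < 3$.

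The main technical obstacle is showing that the tangency constraint imposed before stretching really does force, in the SFT limit, the appearance of a plane in $\widehat E(1,S,\ldots,S)$ with asymptotic $\gamma_1^k$ for some $k\geq 3$. I expect to have to rule out degenerations in which the constraint is distributed across several components of the limit building, leaving each plane with only a low-multiplicity asymptotic, or absorbed by sphere bubbles in the upper level that do not generate a high-multiplicity puncture. This demands a careful case-by-case enumeration of the admissible strata of the SFT compactification, supported by Fredholm index computations for each piece, the uniqueness and automatic transversality of rigid planes in the ellipsoid asymptotic to a prescribed $\gamma_1^k$, and a delicate matching of constraints to dimensions at each stratum. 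I expect this combinatorial-analytic bookkeeping, rather than the area comparison itself, to be the most subtle step.
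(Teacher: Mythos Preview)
Your approach has a concrete numerical gap that prevents it from reaching the bound $R\ge\sqrt{3}$.  You propose to work with spheres in the \emph{line class} $[\CC P^1\times\mathrm{pt}]$, i.e.\ degree $d=1$, and to impose a higher-order tangency constraint so that the moduli space has expected dimension zero.  But in a target of real dimension $2n$ the unparametrised moduli space of degree-one spheres has dimension $2n$, while a tangency constraint of contact order $m$ at a point has codimension $2n+2m-4$.  Thus dimension zero forces $m=2$.  After neck-stretching, a $\mathcal T^{2}p$ constraint can only guarantee a plane in $\widehat E$ asymptotic to $\gamma_1^{k}$ with $k\ge 2$, and your area inequality then yields only $R^2\ge 2$---the Ekeland--Hofer bound you are trying to improve.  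To force $k\ge 3$ with a degree-one curve you would need $m\ge 3$, but then the expected dimension is negative and the moduli space is generically empty, so there is nothing to stretch.  In short, the line class is too small: the entire strength of the theorem comes from letting the degree grow.

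The paper's argument proceeds quite differently.  It fixes an arbitrary integer $d\ge 1$ and studies degree-$d$ spheres in $\CC P^2$ through $3d-1$ generic \emph{point} constraints (not tangency constraints) lying inside a thin ellipsoid $E=E(1/S,1)$.  The heart of the paper is the existence result Theorem~\ref{key}: for suitable $J$ there is a finite-energy plane of degree $d$ in $(\CC P^2\smallsetminus E)^{\infty}_{-}$ with a \emph{single} negative end on $\gamma_1^{(3d-1)}$.  This is exactly the ``combinatorial--analytic bookkeeping'' you allude to, and it is by far the hardest part; the paper spends Section~2 on it, using automatic regularity, the connectedness of $\mathcal M_d(J,p_1,\dots,p_{3d-2})$, varying point constraints, partitions of the constraint set, and even domain-dependent almost-complex structures to rule out the possibility that the limiting curve in the complement has several negative ends of lower multiplicity.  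Once this plane is in hand, a cobordism argument in Section~3 transports it to the higher-dimensional situation, and positivity of its area gives $R^2>(3d-1)/d$ for every $d$, hence $R^2\ge 3$ in the limit.  If you want to salvage a tangency approach you would have to move to degree $d$ and a $\mathcal T^{3d-1}p$ constraint, but then you face the same core difficulty---controlling the splitting of the negative ends---that the paper resolves for point constraints.
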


\bigskip

\noindent{\bf A brief outline of the proof.}
As in \cite{gr},  Theorem \ref{thm2} is proved via an existence theorem for certain holomorphic curves.
Suppose that for any $S>0$ there  is a symplectic embedding  $\phi(S)$ of $E(1,S, \dots, S)$ into $\CC P^2(R)
\times \RR^{2(n-2)}$.
It suffices to show that  for every positive integer $d< S/\sqrt{3}$, there exists a holomorphic plane of degree $d$
in the (negative) symplectic completion of  $(\CC P^2(R)
\times \RR^{2(n-2)}) \smallsetminus \phi(E(1,S, \dots, S))$ whose negative end covers  the
shortest simple Reeb orbit on the boundary of  $\phi(E(1,S, \dots, S))$ a total of $3d-1$ times. In particular, the
symplectic area of such a curve is both positive and equal to $d\pi R^2 - (3d-1)\pi$ and so the existence of these curves implies that $R^2 > (3d-1)/d$
for all $d>0$.

To prove this existence result we use a  cobordism argument to reduce it to an equivalent
problem for curves in a four dimensional symplectic manifold. Starting with well known results concerning holomorphic spheres of degree $d$ in a blow-up of $\CC P^2(R)$,  we then
settle this alternative existence problem
using  automatic regularity theorems,  the compactness
theorem for splittings from \cite{BEHWZ}, and several new techniques, many of which involve families of holomorphic curves
with varying point constraints. In fact, we prove the existence problem in dimension four first,  and then we apply it in the manner described above.

\bigskip

\noindent{\bf Sharpness.}
We also prove that Theorem \ref{thm} is sharp in the following sense.

\begin{theorem}\label{embed2} For any $R>\sqrt{3}$ there exist symplectic embeddings of
$B^2(1) \times B^{2(n-1)}(S)$ into $B^4 (R)\times \RR^{2(n-2)}$ for all $S>0$.
\end{theorem}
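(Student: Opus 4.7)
The plan is to reduce to the case $n = 3$ and then to construct, for each $R > \sqrt{3}$ and each $S > 0$, an explicit symplectic embedding $B^2(1) \times B^4(S) \hookrightarrow B^4(R) \times \RR^2$ by refining Guth's symplectic folding construction from \cite{guth}.

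The reduction is straightforward: inside $\RR^{2(n-1)}$ one has the inclusion $B^{2(n-1)}(S) \subset B^4(S) \times \RR^{2(n-3)}$, since the first four real coordinates of any vector in $B^{2(n-1)}(S)$ have norm at most $S$. Hence any symplectic embedding $\Phi \colon B^2(1) \times B^4(S) \hookrightarrow B^4(R) \times \RR^2$ extends, via the identity on the remaining $2(n-3)$ real coordinates, to the required embedding of $B^2(1) \times B^{2(n-1)}(S)$ into $B^4(R) \times \RR^{2(n-2)}$. So it suffices to treat the case $n = 3$.

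For the $n = 3$ case, I would refine Guth's construction. The basic strategy is to decompose the source $B^2(1) \times B^4(S)$ into a large number of thin ``slabs'' parametrized by a coordinate in the large factor, apply to each slab a distinct symplectic affine map (a composition of shears, rotations, and translations) that folds it into a small region of the target, and keep these images disjoint by separating them in the $\RR^2$ factor. Guth's original method yields some universal constant $C$ in place of $\sqrt{3}$; the refinement needed here adjusts both the slab partition and the affine maps so that, in the limit, the image covers roughly a three-sheeted cover of $B^4(R)$ for $R$ arbitrarily close to $\sqrt{3}$, matching the $3d - 1$ multiplicity that appears in the obstruction proof of Theorem \ref{thm2}.

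The hardest step will be choosing the slab partition and folding parameters so that this three-sheeted image fits cleanly inside $B^4(R)$ with essentially no wasted symplectic area. Since the target's first factor is a round ball rather than a polydisc, this requires tuning the shear directions at each fold to the round boundary of $B^4(R)$, unlike in Guth's original polydisc-into-polydisc argument; matching both the area and the round-ball geometry simultaneously is what forces the threshold $\sqrt{3}$ rather than the weaker $\sqrt{2}$ coming from the second Ekeland--Hofer bound. Once the correct pattern is in place, verifying that the composite map is a symplectic embedding with image in $B^4(R) \times \RR^2$ reduces to an explicit but lengthy bookkeeping computation on the affine building blocks.
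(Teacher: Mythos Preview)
Your reduction to $n=3$ is correct, but the rest of the proposal is a plan rather than a proof: you never actually specify the slab decomposition or the affine maps, and you acknowledge yourself that ``choosing the slab partition and folding parameters'' is the hardest step. Getting the sharp constant $\sqrt{3}$ (rather than Guth's nonexplicit constant) is precisely the content of the theorem, so this step \emph{is} the proof, not a bookkeeping detail to be filled in later.

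The paper takes a different and more concrete route. Rather than reducing to $n=3$, it uses Guth's Main Lemma as a black box: for every $S,\epsilon>0$ there is a symplectic embedding $B^{2(n-1)}(S)\hookrightarrow \Sigma(\epsilon)\times\RR^{2(n-2)}$, where $\Sigma(\epsilon)$ is a once-punctured torus of area $\epsilon$. This reduces the problem for \emph{all} $n$ simultaneously to the purely four-dimensional problem of embedding $\Sigma(\epsilon)\times B^2(r)$, $r<1$, into $B^4(\sqrt{3})$. That four-dimensional embedding is then built explicitly in three steps: (i) immerse $\Sigma(\epsilon)$ into $B^2(\sqrt{2})$ as a figure with one transverse self-crossing, the horizontal crossing strip $C$ having length $2\pi/\lambda$; (ii) resolve the self-intersection of the product immersion by pushing one branch off the other in the $B^2(r)$ factor via a Hamiltonian isotopy, producing an embedding into $B^2(\sqrt{2})\times B^2(\sqrt{2})$; (iii) precompose with a ``winding'' symplectomorphism $w$ of the first factor that coils the long thin strip $C$ tightly around the origin. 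The key estimate is that after step~(ii) the fiber over a point with first coordinate $x_1$ lies in $B^2\bigl(\sqrt{2-|x_1|\lambda/\pi}\bigr)$, while after the winding $|z_1|^2\approx 2\lambda|x_1|/\pi$; combining these gives $|z_2|^2+\tfrac12|z_1|^2\le 2$ together with $|z_1|^2\le 2$, and hence $|z_1|^2+|z_2|^2\le 3$.

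Your ``three-sheeted cover'' heuristic is suggestive but does not obviously translate into this mechanism: the $\sqrt{3}$ in the paper comes from the interplay between the linear decay of the fiber radius along the strip and the area-preserving winding in the base, not from any literal triple covering. Without an explicit construction or at least an indication of how your slabs would reproduce an inequality of this type, the proposal has a genuine gap at its core.
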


\noindent
The proof of Theorem \ref{embed2} involves a refinement of Guth's  embedding procedure from \cite{guth}.

\begin{remark}
We do not know whether there exists a symplectic embedding of $B^2(1) \times B^{2(n-1)}(S)$ into $B^4 (\sqrt{3})\times \RR^{2(n-2)}$ for all large $S$.\footnote{This has since been settled by Pelayo and V\~{u} Ngoc who construct a symplectic embedding of $B^2(1) \times \mathbb{R}^{2(n-1)}$ into $B^4 (\sqrt{3})\times \RR^{2(n-2)}$ in \cite{pn}.}
\end{remark}

\subsection{A related result}

Our approach also allows us to settle the following related problem.

\medskip
\noindent{\bf Question 3.}
{\em What are  the smallest values of $R_1\leq R_2$ for which there are  symplectic embeddings of
$B^2(1) \times B^{2(n-1)}(S)$ into $B^2(R_1) \times B^2(R_2)
\times \RR^{2(n-2)}$ for all $S>0$?}
\medskip

More precisely, by compactifyng  $B^2(R_1) \times
B^2(R_2)$ to  $\CC P^1(R_1) \times \CC P^1(R_2)$, and replacing the study of
holomorphic curves of high degree, say $d$, in $\CC P^2$ by curves
of degree  $(d,1)$ in $\CC P^1 \times \CC P^1$, an identical
argument to the one used to prove Theorem \ref{thm} yields the following result.

\begin{theorem}\label{eh} If $R_1 < \sqrt{2}$ then there are no symplectic embeddings
of $B^2(1) \times B^{2(n-1)}(S)$ into $B^2(R_1)\times
B^2(R_2) \times \RR^{2(n-2)}$ when $S$ is sufficiently large.
\end{theorem}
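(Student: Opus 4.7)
The plan is to reprise the argument outlined for Theorem \ref{thm2}, replacing $\CC P^2(R)$ by $\CC P^1(R_1) \times \CC P^1(R_2)$ and holomorphic curves of degree $d$ by holomorphic curves of bidegree $(d,1)$. Suppose for contradiction that for every $S>0$ a symplectic embedding
\[
\phi(S)\colon B^2(1) \times B^{2(n-1)}(S) \hookrightarrow B^2(R_1) \times B^2(R_2) \times \RR^{2(n-2)}
\]
exists with $R_1 < \sqrt{2}$. I would pass to the ellipsoid $E(1,S,\dots,S) \subset B^2(1) \times B^{2(n-1)}(S)$ and compactify the two bounded factors of the target (which does not alter the complement of the image), yielding an embedding of $E(1,S,\dots,S)$ into $\CC P^1(R_1) \times \CC P^1(R_2) \times \RR^{2(n-2)}$. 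Following the outline of Theorem \ref{thm2}, the key step would be to establish, for every positive integer $d$ in the appropriate range, the existence of a finite-energy holomorphic plane of bidegree $(d,1)$ in the negative symplectic completion of the complement of $\phi(E(1,S,\dots,S))$ whose unique negative end wraps the shortest simple Reeb orbit on $\partial \phi(E(1,S,\dots,S))$ exactly $2d+1$ times.

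Given such planes, positivity of symplectic area yields
\[
d\pi R_1^2 + \pi R_2^2 - (2d+1)\pi > 0,
\]
i.e.\ $R_1^2 > 2 + (1 - R_2^2)/d$, and choosing $d$ sufficiently large produces the contradiction $R_1^2 \geq 2$. This area computation is the only quantitative difference with the $\CC P^2$ case: there the degree-$d$ class has area $d\pi R^2$ and one wraps the Reeb orbit $3d-1$ times, giving $R^2 > 3 - 1/d$; here the class $(d,1)$ has area $d\pi R_1^2 + \pi R_2^2$, and the analogous multiplicity $2d+1 = c_1 \cdot (d,1) - 1$ yields the limiting bound $R_1^2 \geq 2$.

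To prove the existence statement, one follows the strategy of Theorem \ref{thm2} essentially verbatim. The four-dimensional cobordism argument reduces the problem to finding the analogous planes in the negative completion of $(\CC P^1(R_1) \times \CC P^1(R_2)) \setminus \phi(E(1,S,\dots,S))$; the starting point is the $(2d+1)$-complex-dimensional moduli space of smooth bidegree $(d,1)$ curves in $\CC P^1(R_1) \times \CC P^1(R_2)$, all of which are rational by adjunction (genus $(d-1)(1-1) = 0$). Neck-stretching along the boundary of the embedded ellipsoid, combined with the SFT compactness theorem of \cite{BEHWZ}, automatic regularity and positivity of intersections in dimension four, and the families-of-curves-with-varying-point-constraints technique developed in the proof of Theorem \ref{thm2}, should then extract a top-level plane with the required asymptotic profile. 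The principal obstacle, exactly as in the $\CC P^2$ setting, is controlling the possible splittings of the limiting building and ruling out those in which the top-level component has more than one negative puncture, smaller total multiplicity, or the wrong bidegree; the algebraic bookkeeping transfers cleanly because $c_1 \cdot (d,1) = 2d+2$ and $(d,1) \cdot (d,1) = 2d$ play the roles of $c_1 \cdot d = 3d$ and $d^2$ in the $\CC P^2$ analysis, so the index and intersection-theoretic filters that eliminate bad configurations for Theorem \ref{thm2} apply with only numerical modifications here.
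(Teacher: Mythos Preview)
Your proposal is correct and matches the paper's own approach: the paper explicitly omits the proof, stating only that one compactifies $B^2(R_1)\times B^2(R_2)$ to $\CC P^1(R_1)\times\CC P^1(R_2)$, replaces degree-$d$ curves in $\CC P^2$ by bidegree-$(d,1)$ curves, and runs an ``identical argument.'' Your numerics (Chern number $2d+2$, wrapping number $2d+1$, area inequality $d\pi R_1^2+\pi R_2^2>(2d+1)\pi$ giving $R_1^2\geq 2$ as $d\to\infty$) are all right.

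One point worth noting: the paper observes (in the remark following Theorem~\ref{key}) that the $S^2\times S^2$ version of the key existence theorem is in fact \emph{substantially simpler} than the $\CC P^2$ version you propose to mimic. Two special features of the class $(d,1)$ make the hard part of the argument---the elaborate varying-point-constraints machinery used to force a single negative end---unnecessary: first, the Gromov--Witten invariant counting $(d,1)$-spheres through $2d+1$ points equals $1$ (not merely a positive integer $n_d$), which allows a shorter argument as in \cite{hindapp}; second, all $(d,1)$-spheres are embedded, so Embedded Contact Homology index inequalities apply and force the limiting building in $(S^2\times S^2\smallsetminus E)^{\infty}_-$ to consist of a single plane with a single negative end directly. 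So while your transfer of the full $\CC P^2$ argument would work, much of the delicate analysis (Propositions on partitions, the two technical asides, the $m<3d-1$ contradiction) can be bypassed here.
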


This restriction is again stronger than those imposed by  the Ekeland-Hofer capacities which imply that  no such embeddings
exist, for  large enough $S>0$,  when $R_1< 1$. Moreover, Guth's embedding results again imply (this time directly) that
Theorem \ref{eh} is sharp in the
following sense.

\begin{theorem}(\cite{guth})\label{embed} For any $R>\sqrt{2}$ there exist symplectic embeddings
of  $B^2(1) \times B^{2(n-1)}(S)$ into $B^2(R)\times
B^2(R)\times \RR^{2(n-2)}$ for all $S>0$.
\end{theorem}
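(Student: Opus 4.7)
The plan is to deduce Theorem \ref{embed} as a direct consequence of the main embedding result of \cite{guth}. The first step is the elementary observation that $B^{2(n-1)}(S) \subset B^2(S) \times \cdots \times B^2(S)$, since $|z_1|^2 + \cdots + |z_{n-1}|^2 \leq S^2$ forces each $|z_i| \leq S$. Hence $B^2(1) \times B^{2(n-1)}(S) \subset P(1,S,\ldots,S)$, where $P(1,S,\ldots,S) \Def B^2(1) \times B^2(S)^{n-1}$ is the $2n$-dimensional polydisc, and any symplectic embedding of the polydisc into the target restricts to one of $B^2(1) \times B^{2(n-1)}(S)$. Consequently, it suffices to construct symplectic embeddings of $P(1,S,\ldots,S)$ into $B^2(R) \times B^2(R) \times \RR^{2(n-2)}$ for every $R > \sqrt{2}$ and every $S > 0$.

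Such embeddings are supplied by Guth's construction in \cite{guth}: for every $\epsilon > 0$, his main theorem produces symplectic embeddings of $P(1,S,\ldots,S)$ into $B^2(\sqrt{2}+\epsilon) \times B^2(\sqrt{2}+\epsilon) \times \RR^{2(n-2)}$ with the crucial feature that the radii of the two bounded factors of the target are \emph{independent} of $S$. Taking $\epsilon = R - \sqrt{2}$ furnishes the required family of embeddings.

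The technical heart of the argument, which belongs to Guth, is an iterative symplectic folding procedure. One decomposes the factor $B^2(S)^{n-1}$ into many thin slabs and uses symplectic shears and translations in the $\RR^{2(n-2)}$ directions to rearrange these slabs so that their shadows on the first two coordinate planes collectively fit inside $B^2(\sqrt{2}+\epsilon)^2$. The value $\sqrt{2}$ emerges from the optimal packing of these two-dimensional shadows and, in view of Theorem \ref{eh}, cannot be improved. This packing step is the main obstacle in any attempted reproof from scratch, but it is already handled in full generality by Guth's theorem, so no new construction is needed here.
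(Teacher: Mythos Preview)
Your proposal is correct in spirit—the theorem is attributed to Guth in the paper itself—but it differs from the paper's actual argument in a way worth noting, and your description of Guth's construction is not quite accurate.

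You propose to black-box cite ``Guth's main theorem'' as giving embeddings of the full polydisc $P(1,S,\ldots,S)$ into $B^2(\sqrt{2}+\epsilon)^2 \times \RR^{2(n-2)}$. Be careful here: Guth's headline theorem is stated with an unspecified dimensional constant $C(n)$, not with the sharp $\sqrt{2}$. Extracting $\sqrt{2}$ requires going into the construction. Also, your sketch of the mechanism as ``decomposing $B^2(S)^{n-1}$ into thin slabs and folding'' is not how Guth's method works.

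The paper instead reconstructs the embedding explicitly in two steps. First it quotes Guth's \emph{Main Lemma} (not his main theorem): for any $S,\epsilon>0$ there is a symplectic embedding $B^{2(n-1)}(S)\hookrightarrow \Sigma(\epsilon)\times\RR^{2(n-2)}$, where $\Sigma(\epsilon)$ is a once-punctured torus of area $\epsilon$. Second, following Lemma~3.1 of \cite{guth}, it builds by hand a symplectic embedding of $\Sigma(\epsilon)\times B^2(1)$ into $B^2(R)\times B^2(R)$ for any $R>\sqrt{2}$: one immerses $\Sigma(\epsilon)$ into a disc of area slightly greater than $2\pi$ with one self-crossing, and then removes the double points of the product immersion by a Hamiltonian displacement in the second factor (Lemma~\ref{push}). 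This is where the constant $\sqrt{2}$ actually appears.

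The paper's reason for opening the black box is not merely completeness: this explicit torus-immersion picture is precisely what gets refined in the following subsection to prove Theorem~\ref{embed2}, the embedding into $B^4(\sqrt{3})$. Your citation-based approach would close out Theorem~\ref{embed} but would leave no foothold for that refinement.
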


\subsection{Organization} The next section contains some background material  and the proof of  the crucial  existence result, Theorem \ref{key}, which concerns  special
 holomorphic curves in certain four dimensional symplectic manifolds. Theorem \ref{thm2} is then proved in Section $3$. Our proof of Theorem \ref{eh} is completely similar
and is thus omitted. In Section $4$, we construct the embeddings of Theorems \ref{embed2} and \ref{embed}.

\subsection{Acknowledgements} The authors would like to thank Dusa McDuff for helpful comments and suggestions. We also thank an anonymous referee for pointing out  gaps in previous proofs of Proposition \ref{path2} and Proposition \ref{compactt}, as well as for many other helpful remarks  incorporated into the text.

\section{On holomorphic curves in dimension four} \label{dim4}

Throughout this section we fix a real number $R>1$ and a positive integer $d$. As described below, we also fix a symplectically embedded ellipsoid $E$  in $\CC P^2(R)$, the complex projective plane
equipped with the symplectic form $R^2 \omega_{FS}$. The main result in this section is Theorem \ref{key} which establishes the existence of certain holomorphic planesÊ of degree $d$  in the negative
symplectic completion of $\CC P^2 \smallsetminus E$. Starting with a well known moduli space of seed curves in $\CC P^2 \# \overline{\CC P^2}$,  the symplectic manifold obtained from $\CC P^2(R)$ by blowing up a ball inside of $E$, we detect the desired curves as part of the possible limits which occur when one {\it stretches the neck} along the boundary of $E$.

\subsection{An embedded ellipsoid} \label{ellipsoid}

Let us first recall some basic facts concerning standard ellisoids in $\RR^4 =\CC^2$. For $a<b$, let $E(a,b)$ denote the ellipsoid
$$E(a,b)=\left\{(z_1,z_2) \in \CC^2 \bigmid \frac{|z_1|^2}{a^2} + \frac{|z_2|^2}{b^2} \leq1 \right\}.$$ If $\frac{a^2}{b^2} \in \RR \smallsetminus \Q$,
then the standard Liouville form on $\CC^2$ restricts to
$\partial E(a,b)$ as a contact form $\alpha_{E(a,b)}$ which has exactly two simple Reeb orbits, $\gamma_1$ and $\gamma_2$, which lie in the planes $\{z_2=0\}$ and
$ \{z_1=0\}$, respectively. The action of $\gamma_1$ is
 $\pi a^2$ and the action of $\gamma_2$ is $\pi b^2$.
The Conley-Zehnder indices of these orbits, with respect to the natural
trivialization of the $z_1$ and $z_2$
coordinate planes, are
given by $\mu(\gamma_1) =3$ and $\mu(\gamma_2)= 2+2 \Big\lfloor
\frac{b^2}{a^2} \Big\rfloor +1$. More generally, if $\gamma^{(r)}_j$ is the $r$-fold cover of
$\gamma_j$, then we have
\begin{equation}
\label{gamma1}
\mu(\gamma^{(r)}_1) =2r +2\Big\lfloor  \frac{ra^2}{b^2}  \Big\rfloor +1
\end{equation}
and
\begin{equation}
\label{gamma2}
\mu(\gamma^{(r)}_2) =2r +2\Big\lfloor \frac{rb^2}{a^2}  \Big\rfloor +1.
\end{equation}
These trivialization along closed Reeb orbits  will be used throughout, see Section \ref{finer} below.

Now for $S>\sqrt{2}$ it follows from Theorem 2 of \cite{schl} that  the ellipsoid $E(1/S,1)$ can be symplectically embedded into the interior of $B^4(1)$ and hence $\CC P^2(R)$.
For a fixed degree $d$ we now fix such an embedding of $E(1/S,1)$ for an $S$ satisfying
\begin{equation}
\label{tford}
S > \sqrt{3d}.
\end{equation}
In what follows, we will denote $E(1/S,1)$ by $E$ and will identify
$E$ with its image in $\CC P^2(R)$.

\subsection{Moduli spaces of seed holomorphic curves}\label{ddeeff}

For an $r< 1/S$, let  $(\CC P^2 \# \overline{\CC P^2}, \omega_{R,r})$ be the symplectic manifold obtained from $\CC P^2(R)$ by blowing up the ball $B^4(r)$ inside $E$.
\footnote{The choice of the radius $r$ of this ball  will be used explicitly later, see Lemma \ref{h}.}
Let ${\mathcal J}$ be the space of smooth  almost-complex
structures on $\CC P^2 \# \overline{\CC P^2}$  which are tame with respect to  $\omega_{R,r}$ and denote the homology class of the exceptional divisor in $\CC P^2 \# \overline{\CC P^2}$ by $\mathcal{E}$ and the homology class of a line in $\CC P^2$ by $\mathcal{L}$. 

For a fixed $J$ in ${\mathcal J}$ and an ordered collection of  points $p_1, \dots, p_{2d}$ in $\CC P^2 \# \overline{\CC P^2}$,
we consider the moduli space ${\mathcal M}_d(J, p_1, \dots, p_{2d})$ defined as
$$
\Big\{ (f,(y_1, \dots,y_{2d}))  \mid \bar{\partial}_{\scriptscriptstyle{J}}f =0,\,
f(y_i)=p_i,\, [f] = d\mathcal{L} - (d-1)\mathcal{E}\Big\} / G,
$$
where $f$ is in $C^{\infty}(S^2,\CC P^2 \# \overline{\CC P^2})$,  $(y_1, \dots y_{2d})$ is a $2d$-tuple of pairwise distinct points in $S^2$, and $G$ is the
reparameterization group $\PSL(2,\CC)$ of the domain.

We will also consider the moduli space of unconstrained holomorphic spheres associated to an almost-complex structure $J \in {\mathcal J}$ and a homology class $A \in H_2(\CC P^2 \# \overline{\CC P^2})$ defined as $${\mathcal M}_A(J) =
\Big\{ f \in C^{\infty}(S^2,\CC P^2 \# \overline{\CC P^2})  \mid \bar{\partial}_{\scriptscriptstyle{J}}f =0,\, [f] = A \Big\} / G.
$$

\begin{definition} \label{general1}
A collection of points $\{p_i\}_{i=1}^{2d}$ is said to be in {\it general position relative to $J$} if no somewhere injective $J$-holomorphic sphere of virtual index $2k$ has image intersecting more than $k$ of the points. In particular, there are no somewhere injective $J$-holomorphic spheres of negative virtual index.
\end{definition}

The main result of this section is
\begin{proposition} \label{compactCP23}
Suppose that the  points $\{p_i\}_{i=1}^{2d}$ are in general position relative to $J$. Then the moduli space ${\mathcal M}_d(J, p_1, \dots, p_{2d})$ consists of a single class represented by an embedded, regular holomorphic sphere.
\end{proposition}


\subsubsection{The proof of Proposition \ref{compactCP23} }

We begin with a few technical preliminaries.



\begin{proposition} \label{closedindex} The space ${\mathcal M}_d(J, p_1, \dots, p_{2d})$ has virtual dimension $0$.
\end{proposition}

\begin{proof}
The index formula for closed holomorphic curves is well known, see for example the book \cite{msa}.
\end{proof}

\begin{lemma} \label{surjective}
Suppose that the  points $\{p_i\}_{i=1}^{2d}$ are in general position relative to $J$. If $f$ is any curve representing a class in the space ${\mathcal M}_d(J, p_1, \dots, p_{2d})$ then $f$ is embedded and the linearized Cauchy-Riemann operator at $f$ is surjective.
The canonical orientation of every class in  ${\mathcal M}_d(J, p_1, \dots, p_{2d})$ is $+1$.
\end{lemma}

\begin{proof}
As the class $d\mathcal{L}- (d-1)\mathcal{E}$ is primitive, holomorphic spheres representing this homology class are necessarily somewhere injective.

The adjunction formula, Theorem $2.6.3$ in \cite{msa},  now implies that $f$ is actually embedded. It now follows from automatic regularity, see \cite{hls} or Lemma $3.3.3$ of \cite{msa}, that $f$ is regular in ${\mathcal M}_d(J, p_1, \dots, p_{2d})$. Indeed, as shown in Section $3$ of \cite{hls}, any element $s$ of the kernel of the relevant linearized operator can be represented as a section of the normal bundle $\nu$ of the image of $f$ in $\CC P^2 \# \overline{\CC P^2}$ which is {\it holomorphic} with respect to a certain complex structure for which the zero section is also holomorphic. But as the normal bundle has Chern class $c_1(\nu) = 2d-1$ and $s$ has zeros at $p_1, \dots, p_{2d}$, it follows from positivity of intersection that $s$ must vanish identically. Thus the kernel is trivial and as the virtual index is $0$ the curve is regular. We thank the referee for this argument.


For $f$ as above, let $D_f$ denote the Cauchy-Riemann operator at $f$. The canonical orientation of the class $[f] \in {\mathcal M}_d(J, p_1, \dots, p_{2d})$
is determined by the $\mod 2$ spectral flow of a family of  linear operators  of the form $D_f + P^t$ where the $P^t$ are compact and the end point $D_f + P^1$ is complex linear and bijective, see Remark $3.2.5$ of \cite{msa}. The automatic regularity argument above applies to the operators $D_f + P^t$ as well. Since these operators all have Fredholm index zero,  their kernels must all be trivial and hence the spectral flow has no crossings. From this is follows that the  canonical orientation of any class $[f]$ is $(-1)^0=+1$.
\end{proof}

\begin{lemma} \label{regular1} Assume that $J \in \mathcal{J}$ is regular for every moduli space of holomorphic spheres
${\mathcal M}_A(J)$ \footnote{The set of such almost-complex structures  is of the second category, see for instance \cite{msa}, Theorem $3.1.5$.}, that is, the linearized Cauchy-Riemann operator is surjective at all somewhere injective curves.
The sets of $2d$-tuples of points which are not in general position  relative to $J$ is of codimension $2$. In other words, such sets of points in $(\CC P^2 \# \overline{\CC P^2})^{2d}$ lie in the image of countably many smooth maps from manifolds of dimension at most $8d-2$.
\end{lemma}

\begin{proof}
Let $\mathcal{B}$ be a moduli space of somewhere injective holomorphic spheres of virtual index $2k$. Let $\mathcal{B}^{k+1}$ denote the corresponding moduli space of curves equipped with $k+1$ marked points. There is a smooth evaluation map $\mathcal{B}^{k+1} \to (\CC P^2 \# \overline{\CC P^2})^{k+1}$. Since $J$ is regular, if $\mathcal{B}$ is nonempty then $k \ge 0$ and $\mathcal{B}^{k+1}$ has dimension $2k+2(k+1)$ while $(\CC P^2 \# \overline{\CC P^2})^{k+1}$ has dimension $4(k+1)$. The points $\{p_i\}_{i=1}^{2d}$ are not in general position only if some subset of size $k+1$ lies in the image of one of these evaluation maps, and so our result follows.
\end{proof}

An identical argument gives the following. For a family of almost-complex structures $\{J_t\}$ and for $t \in [0,1]$ we  define the moduli spaces $${\mathcal M}_A(\{J_t\}) =
\Big\{ ([f],t)  \mid [f] \in {\mathcal M}_A(J_t) \Big\}.
$$ We now consider tuples of points of the form  $$(p_1, \dots, p_{2d} ,t)  \in (\CC P^2 \# \overline{\CC P^2})^{2d} \times [0,1].$$

\begin{lemma} \label{regular11}Assume that $\{J_t\}$ is regular for every moduli space of holomorphic spheres
${\mathcal M}_A(\{J_t\})$. \footnote{Again, the set of such families of almost-complex structures  is of the second category, see for instance \cite{msa}, Theorem $3.1.7$.}
The set of points $(p_1, \dots, p_{2d} ,t)$ such that $\{p_i\}_{i=1}^{2d}$ is not in general position  relative to the corresponding $J_t$ is of codimension $2$ inside $(\CC P^2 \# \overline{\CC P^2})^{2d} \times [0,1]$. In other words, these points in $(\CC P^2 \# \overline{\CC P^2})^{2d} \times [0,1]$ lie in the image of countably many smooth maps from manifolds of dimension at most $8d-1$.
\end{lemma}

\begin{corollary} \label{regularcor11}
The set of maps $\bar{p} \in C^{\infty} ([0,1], (\CC P^2 \# \overline{\CC P^2})^{2d})$ such that $\bar{p}(t) = \{p_i(t)\}_{i=1}^{2d}$ is in general position relative to $J_t$ for all $t$, is of second category.
\end{corollary}

\begin{lemma} \label{compactCP21}
Suppose that the points $\{p_i\}_{i=1}^{2d}$ are in general position relative to $J$. Then the moduli space ${\mathcal M}_d(J, p_1, \dots, p_{2d})$ is compact. That is, any sequence of holomorphic curves  representing classes in ${\mathcal M}_d(J, p_1, \dots, p_{2d})$ has a convergent sequence which converges $C^{\infty}$-uniformly modulo reparameterization.
\end{lemma}

\begin{proof}
Holomorphic spheres representing the homology class $d\mathcal{L} - (d-1)\mathcal{E}$ have unconstrained deformation index $4d$. By Gromov compactness, for any sequence as above there exists a subsequence converging in a certain sense to a cusp curve. This cusp curve consists of a number, say $l$, of holomorphic spheres intersecting at nodal points. Suppose that there are $N$ of these nodes. Letting $2k_i$ be the deformation index of the $i$th sphere, we then have the formula $$2N+\sum_{i=1}^l 2k_i = 4d.$$ Now, each of our constraint points must lie on at least one of the holomorphic spheres, and each of the spheres multiply covers a somewhere injective sphere of lower index (strictly lower if the cover is nontrivial). Therefore by the assumption of general position we have $\sum_{i=1}^l k_i \ge 2d$. Combining the two formulas we see that $N=0$ and so the cusp curve is in fact a single (somewhere injective) holomorphic sphere and Gromov compactness gives $C^{\infty}$-uniform convergence as required.
\end{proof}

Similarly we have the following.

\begin{lemma} \label{compactCP22}
Suppose that a sequence of point constraints $\{p^{(j)}_i\}_{i=1}^{2d}$ converges to $\{p_i\}_{i=1}^{2d}$, and let $C_j$ be a class in ${\mathcal M}_d(J, p^{(j)}_1, \dots, p^{(j)}_{2d})$ for all $j$. Then if the $\{p_i\}_{i=1}^{2d}$ are in general position for $J$ a subsequence of the $C_j$ converges to a  class  $C \in {\mathcal M}_d(J, p_1, \dots, p_{2d})$.
\end{lemma}

At this point we continue with the proof of Proposition \ref{compactCP23}.  Choose $J$ and the constraints $\{p_i\}_{i=1}^{2d}$ as in the statement of  Proposition \ref{compactCP23}. By Lemma \ref{surjective} and  Lemma \ref{compactCP21} there are finitely many classes in ${\mathcal M}_d(J, p_1, \dots, p_{2d})$, all of which have canonical orientation $+1$.   We now show that the  number of these classes is independent of the choice of $J$ and the constraint points. Let $J'$ and  $\{p'_i\}_{i=1}^{2d}$ be another
good set of data. Let $\{J_t\}$ be a smooth family of regular  almost complex structures, as in Lemma \ref{regular11}, such that  $J_0=J$ and $J_1=J'$, and let $\bar{p} \in C^{\infty}([0,1], (\CC P^2 \# \overline{\CC P^2})^{2d})$ be such that $\bar{p}(0)=\{p_i\}$ and $\bar{p}(1)=\{p'_i\}$.
Set
$${\mathcal N}(J_t, \bar{p})=\{(C,t)|C \in {\mathcal M}_d(J_t, \bar{p}(t))
\text{ and } t \in [0,1]\}.$$
We may assume that $\bar{p}$ and $J_t$ are chosen such that ${\mathcal N}(J_t, \bar{p})$ is a smooth oriented $1$-dimensional manifold.  By Corollary \ref{regularcor11} we may also assume that $\bar{p}(t)$ is in general position relative to $J_t$ for all $t \in [0,1]$, and so by Lemma \ref{compactCP22}  the space ${\mathcal N}(J_t, \bar{p})$ is compact. Hence, ${\mathcal N}(J_t, \bar{p})$ is an oriented cobordism between ${\mathcal M}_d(J_t, \bar{p}(0))$ and ${\mathcal M}_d(J_t, \bar{p}(1))$. The  number of classes in these spaces, when counted with sign, must therefore be equal. Since these signs are all $+1$,  these moduli spaces  have the same absolute number of classes.

To show that this number of classes is one we consider the case when  $J$ is integrable. Then $\CC P^2 \# \overline{\CC P^2}$ is a holomorphic $\CC P^1$-bundle over a $\CC P^1$ of degree $1$,  i.e., there is a holomorphic projection to the line at infinity in $\CC P^2$ whose fibres are lines intersecting the exceptional divisor. There are meromorphic sections of this bundle with $d$ zeros (corresponding to intersections with the line at infinity) and $d-1$ poles (corresponding to $d-1$ intersections with the exceptional divisor). These are holomorphic spheres in our homology class. Picking our $2d$ points to lie on such a curve we have a nonempty moduli space. But the self-intersection number of curves in this class is $2d-1$ and so by positivity of intersection our section will in fact be the unique curve passing through these points as required.

\subsection{Stretching the neck along $\partial E$} \label{split}


Recall that we have a symplectically embedded ellipsoid $E \subset B^4(R) \subset \CC P^2(R)$ and we have blown-up a ball $B^4(r) \subset E$, where $r<1/S$, to obtain the manifold $\CC P^2 \# \overline{\CC P^2}$. We now choose our constraint points $p_i$ to be in $E \smallsetminus B^4(r) \subset \CC P^2 \# \overline{\CC P^2}$, split  the manifold $\CC P^2 \# \overline{\CC P^2}$ along the boundary $\partial E$, as in \cite{BEHWZ}, and analyze the limits of holomorphic spheres representing  classes in ${\mathcal M}_d(J, p_1, \dots, p_{2d})$ under this process. In the present section we describe the relevant details of the splitting procedure and the immediate implications of the compactness theorem of \cite{BEHWZ}. In the section which follows  we refine these compactness results using some of the special features of our setting.

It will useful to first restrict the class of of almost complex structures on $\CC P^2 \# \overline{\CC P^2}$ that we consider. Let $\Sigma$ be the exceptional divisor in $\CC P^2 \# \overline{\CC P^2}$ and let $\CC P^1(\infty)$ be the line at infinity in $\C P^2(R)$. Denote by $\mathcal{J}^{\star}$ the subset of $\mathcal{J} $ consisting  of almost-complex structures  for which $\Sigma$ and $\CC P^1(\infty)$ are complex, and which are standard in a fixed open  neighborhood $U_{\Sigma}$ of $\Sigma$.
In the remainder of Section 2 we will restrict ourselves to the almost complex structures in $\mathcal{J}^{\star}$. The fact that $\Sigma$ and $\CC P^1(\infty)$ can now be assumed to be holomorphic will allow us to use the positivity of intersection theorem to control their intersections with other holomorphic curves. This strategy is used several times (see, for example, the first paragraph of Section \ref{propindex}).
The restriction on the neighborhood of $\Sigma$ is used in the proofs of Lemma \ref{rclose} and Proposition \ref{oneend}. It is also important to note that in restricting ourselves to $\mathcal{J}^{\star}$ we are not introducing any new difficulties. For example, the various genericity statements in Section \ref{ddeeff} continue to hold when $\mathcal{J}$ is replaced by  $\mathcal{J}^{\star}$  since, when thought of as holomorphic curves, $\Sigma$ and $\CC P^1(\infty)$ are automatically regular, and by positivity of intersection multiple covers of $\Sigma$ itself are the only curves contained in the fixed neighborhood of $\Sigma$. Moreover, as $\partial E$ is disjoint from both $\Sigma$ and $\CC P^1(\infty)$, the restriction to $\mathcal{J}^{\star}$ does not influence the neck stretching procedure, the details of which we now recall.



Let $X$ be the vector field defined near $\partial E$ as  the symplectic dual
of the Liouville form. An almost-complex structure $J$ on $\CC P^2 \# \overline{\CC P^2}$ is said to be compatible with $E$ if the contact structure
$\{ \alpha_E = 0\}$ on $\partial E$ is equal to $T(\partial E) \bigcap  JT(\partial E)$, and $J X$ is equal to the
Reeb vector field of $\alpha_E$. Denote by ${\mathcal J}_E^{\star}$ the set of $J \in {\mathcal J}^{\star}$ which are compatible with $E$.

For every natural number $N \in \mathbb{N}$, the union of the three pieces
$$ \left( E \# \overline{\CC P^2}, e^{-N}\omega_{R,r}\right), \, \left( \partial E \times [-N, N], d(e^{\tau} \alpha_E)\right),\, \text{ and } \left( \CC P^2 \smallsetminus E , e^{N}\omega_{R,r}\right),$$
attached along their appropriate boundary components, is a symplectic manifold $((\CC P^2 \# \overline{\CC P^2})^N, \omega_{R,r}^N)$. We recall that $\alpha_E$ is the restriction of the standard Liouville $1$-form to $\partial E$, see section \ref{ellipsoid}, and $\tau$ here denotes the coordinate on $[-N,N]$. For a $J$ in ${\mathcal J}_E^{\star}$, let $J^N$ be the
continuous  almost-complex structure on  $(\CC P^2 \# \overline{\CC P^2})^N$ which equals $J$ on the disjoint union of $(\CC P^2 \smallsetminus E) $ and $E \# \overline{\CC P^2}$, and is translation invariant on $\partial E \times [-N, N]$. To make each of the $J^N$ smooth one must perturb $J$ near $\partial E$.  As noted in \S 3.4 of \cite{BEHWZ}, the choice of this perturbation  is irrelevant for the compactness theorem below.  Accordingly, we will henceforth assume that this choice has been made, and that the almost-complex structures $J^N$ are smooth.

Fix  constraint points $p_i$  in $E \smallsetminus B^4(r) \subset \CC P^2 \# \overline{\CC P^2}$ and a $J$ in ${\mathcal J}_E^{\star}$. For each $N \in \mathbb{N}$ let $C_N$  be a class in ${\mathcal M}_d(J^N, p_1, \dots, p_{2d})$ and fix a representative curve $f_N$ for $C_N$. The following compactness theorem is proved by Bourgeois, Eliashberg, Hofer, Wysocki and Zehnder in \cite{BEHWZ}.
\begin{theorem}(Theorem 10.6 of \cite{BEHWZ}) \label{ccc}
There exists a subsequence of the $f_N$ which  converges to a
holomorphic building, $\mathbf{ F}$.
\end{theorem}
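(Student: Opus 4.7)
The plan is to follow the template of Gromov's compactness theorem, but adapted to a target manifold that is itself degenerating as $N\to\infty$. The three classical ingredients that drive the argument are a uniform energy bound, elliptic bootstrapping in the bulk regions, and Hofer's asymptotic analysis of finite-energy punctures.

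First I would establish a uniform bound on the appropriate Hofer-type energy $E(f_N)$. Each $f_N$ represents the class $dL$ through the fixed point constraints, and the symplectic form $\omega_{FS}^N$ decomposes explicitly on the three pieces. On the outer piece $\CC P^2\smallsetminus E$ the $\omega$-area is at most $d\pi R^2$; on $E$ it contributes at most $d\pi e^{-N}$; on the neck $\partial E\times[-N,N]$ the $d\alpha_E$-contribution is bounded by the total action of the asymptotic Reeb orbits, which in turn is dominated by the $\omega$-area of the outer piece. This gives a $J^N$-independent bound on the Hofer energy.

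Second, I would extract a subsequence with local $C^\infty_{\rm loc}$ convergence on each fixed piece. On any compact subset of $E$ or $\CC P^2\smallsetminus E$ the structures $J^N$ are literally constant in $N$ (up to the fixed perturbation near $\partial E$), so the standard $\epsilon$-regularity and monotonicity lemmas for $J$-holomorphic curves in $4$-manifolds produce convergent subsequences away from a finite bubbling set. At each concentration point a rescaling procedure, iterated finitely many times as in Gromov's soft-sphere argument, produces a bubble tree of finite-energy spheres.

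Third, I would analyze the neck by a slicing and translation scheme. Cover $[-N,N]$ by a finite collection of windows of bounded length; for each window the translated restrictions of $f_N$ are maps into the fixed symplectization $(\RR\times\partial E,d(e^\tau\alpha_E))$, to which the same local compactness applies. Hofer's asymptotic theorem then identifies each finite-energy puncture of the limiting curves with a closed Reeb orbit of $\alpha_E$, with exponential decay, so the levels glue together by matching asymptotic orbits and multiplicities. The levels in the bulk and in the symplectization together form the holomorphic building $\mathbf F$, and additivity of energy and homology class ensures $[\mathbf F]=dL$.

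The main obstacle is the neck analysis, specifically the combinatorial bookkeeping of rescalings: one must show that only finitely many cylindrical levels are needed, that asymptotic orbits on adjacent levels match as multicovers of simple Reeb orbits, and that no energy escapes to infinity in the symplectization direction. These points are precisely the content of the compactness theorem proved in detail in \cite{BEHWZ}, and for the purposes of the present paper it is this result that I would cite, since reproducing the proof would require a substantial technical digression.
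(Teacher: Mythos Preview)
The paper does not prove this theorem at all; it is stated purely as a citation of Theorem~10.6 of \cite{BEHWZ}, and the subsequent subsections merely describe the structure of the limiting building $\mathbf{F}$ and the meaning of convergence, again following \cite{BEHWZ}. Your final sentence reaches exactly this conclusion---that the right move is to cite \cite{BEHWZ} rather than reproduce the argument---so in the end your approach agrees with the paper's.

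The preceding three paragraphs of your proposal are a reasonable high-level sketch of the strategy behind the SFT compactness theorem, but they go well beyond what the paper does and are not needed here. If you intend them as an actual proof, they are far from complete: the uniform Hofer-energy bound, the finiteness of the number of symplectization levels, the matching of asymptotics between levels, and the prevention of energy loss in long thin cylinders are each substantial pieces of analysis in \cite{BEHWZ}, and your outline does not supply the arguments. Since the paper treats the result as a black box, the appropriate write-up is simply to state the theorem and cite \cite{BEHWZ}, as you yourself conclude.
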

We now describe the aspects of  this theorem which are relevant to our purposes. The reader is referred to \cite{BEHWZ} for the precise definitions, statements and proofs
of the general compactness theorem for holomorphic curves under splittings.

\medskip

\noindent{\bf The limits.}  We begin by describing a  holomorphic building $\mathbf{F}$  which arises as the limit of a subsequence of the  curves $f_N$ (see Chapter 9 of \cite{BEHWZ}).
The domain of  $\mathbf{F}$ is  a nodal Riemann sphere  $(\mathcal{S},j)$ with punctures. The building $\mathbf{F}$ then consists of a collection of finite energy holomorphic maps from the collection of
punctured spheres of $\mathcal{S}\smallsetminus \{\mathrm{nodes}\}$ to one of the
three symplectic manifolds:
\begin{itemize}
\item
$
\left( E^{\infty}_+ , \omega^{\infty}_+\right)=  \left( E \# \overline{\CC P^2}, \omega_{R,r}\right) \cup \left( \partial E \times [0,\infty), d (e^{\tau} \alpha_E)\right),
$
 \item
$
(SE, \omega_{SE})=\left( \partial E \times \mathbb{R}, d(e^{\tau} \alpha_E)\right),$
\item
 $\left( (\CC P^2 \smallsetminus E)^{\infty}_- , \omega^{\infty}_-\right)= \left(\CC P^2 \smallsetminus E , \omega_{R,r} \right) \cup  \left( \partial E \times (-\infty, 0], d (e^{\tau} \alpha_E)\right).
$
\end{itemize}
These target manifolds are equipped with compatible almost-complex structures  which are induced by $J$ and are translation
invariant on the subsets which are cylinders over $\partial E$.
The curves of $\mathbf{ F}$ are then  holomorphic with respect to these almost-complex structures and the complex structures on
the punctured spheres  induced by the structure $j$ on $\mathcal{S}$.

Since each curve of $\mathbf{ F}$ has finite energy, they are all asymptotically cylindrical near each of their punctures, to some multiple of either $\gamma_1$ or $\gamma_2$. If $F$ is a curve of $\mathbf{F}$ with image in $(\CC P^2 \smallsetminus E)^{\infty}_-$, then the punctures in its
domain  are all negative, i.e  as $z\in S^2$ approaches a puncture on the domain of $F$,  $F(z)$ takes values in
$\partial E \times (-\infty , 0]$ and its $(-\infty,0]$-component  converges to $-\infty$.
Similarly,  each  curve  of $\mathbf{F}$ with image in $E^{\infty}_+$  has only  positive punctures, and
curves  of $\mathbf{F}$ with  image  in $SE$ have both negative and positive punctures (but by the maximum principle not
only negative punctures).

The limiting  building $\mathbf{F}$ is also equipped with a {\it level structure}.
For a building  $\mathbf{F}$ of level $k$, this structure is encoded
by a labeling of the punctured Riemann spheres of  $\mathcal{S}\smallsetminus \{\mathrm{nodes}\}$
by  integers from $0$ to $k+1$, called levels,  such that the levels of two components which share
a node,  differ at most by $1$. Let $\mathcal{S}_r$ denote  the  union of components of level $r$ and denote by
$v_r$ the holomorphic curve of $\mathbf{ F}$ with (possibly disconnected) domain $\mathcal{S}_r$.  Then $v_0:\mathcal{S}_0\to E^{\infty}_+$,
$v_r:\mathcal{S}_r\to SE$, for $1\le r \le k$, and
$v_{k+1}:\mathcal{S}_{k+1}\to (\CC P^2 \smallsetminus E) ^{\infty}_-$.   Moreover, each node shared by
$\mathcal{S}_r$ and $\mathcal{S}_{r+1}$ is a positive puncture for $v_r$ and a negative puncture for $v_{r+1}$, each asymptotic to
the same Reeb orbit. As well,  $v_r$ extends continuously
across each node within $\mathcal{S}_r$. As part of the definition of a limiting building from \cite{BEHWZ} it is assumed that none of the curves $v_r$, for $1\le r \le k$, consist entirely of trivial cylinders over Reeb orbits. (Although, $\mathbf{F}$ itself can include some trivial cylinders.)
With this, the level structure of a specific limit is well defined (whereas the limit itself is not, see Remark \ref{uniquelimit}).

Lastly, we recall that the curves of $\mathbf{ F}$  have  two collective properties. The first of these is the fact that the sum, over components, of their virtual indices
is equal to $0$, the deformation  index of the curves $f_N$. (Formulas for the virtual indices of the curves of  $\mathbf{ F}$ are described in detail in Section \ref{finer}.)
To state the second collective property,  we must  first recall the definition of a
{\it compactifcation} of a curve of  $\mathbf{ F}$.  This definition  depends on the target of the curve.  For a curve $G$ of $\mathbf{ F}$  with image in $SE=\partial E \times \mathbb{R}$ one can write $G =(g,a)$ where
$g$ maps the domain of $G$ to $\partial E$.  The map $g$ then extends to a continuous map $\overline{G}$, the compactification of $G$,
which takes the oriented blow-up of the domain of $G$ to $\partial E$ such that the circle corresponding to each puncture is mapped to the closed Reeb orbit
on $\partial E$ which determines the asymptotic behavior of $G$ near that puncture, (see Section $4.3$ of \cite{BEHWZ} for a description of the oriented blow-up, and Proposition 5.10 of \cite{BEHWZ} for the asymptotic behavior).
Let  $F$ be a curve of $\mathbf{F}$ with image in $(\CC P^2 \smallsetminus E)^{\infty}_-$. As described in Section 3.2 of \cite{BEHWZ}, one can identify
$(\CC P^2 \smallsetminus E) ^{\infty}_-$ with $\CC P^2 \smallsetminus E$
via a diffeomorphism $\Psi_-$ which is the identity map away from an arbitrarily small tubular neighborhood of
$\partial(\overline{\CC P^2 \smallsetminus E})$ in $\overline{\CC P^2 \smallsetminus E}$.
One can then extend the map $\Psi_- \circ F$ to a  smooth map $\overline{F}$  which takes the
oriented blow-up of the domain of $F$ to $\overline{\CC P^2 \smallsetminus E}$ such that  each boundary circle
goes to the appropriate closed Reeb orbit on $\partial E$. A choice of this extension $\overline{F}$ is a compactification of $F$. Similarly, for a curve $H$ of $\mathbf{ F}$  with image in
$E ^{\infty}_+$, one can use a diffeomorphism
$
\Psi_+ \colon E ^{\infty}_+ \to E\# \overline{\CC P^2} \smallsetminus \partial E
$
to define a compactification $\overline{H}$  of  $H$ as a smooth extension of $\Psi_+ \circ H$ which takes the oriented blow-up of the domain of $H$
to $E \# \overline{\CC P^2}$, and again takes each  boundary circle  to the corresponding  closed Reeb orbit on $\partial E$. If one fixes a compactification for each of the curves in $\mathbf{F}$, then these
maps must fit together to form a continuous map $ \overline{\mathbf{ F}} \colon S^2 \to \CC P^2 \# \overline{\CC P^2}$. The map is smooth away from the boundary circles and there $\omega_{R,r}$ pulls back to a nonnegative multiple of an area form on $S^2$. The pull-back degenerates only on the parts of $S^2$ mapping onto Reeb orbits in $\partial E$, corresponding to curves in $SE$ which cover the trivial cylinders.

Given the asymptotic convergence of finite energy holomorphic maps the following is well defined in terms of the notions described above. It gives a meaning to symplectic area directly in terms of our original symplectic form, and of course differs from the areas defined by the symplectic forms $\omega_+^{\infty}$, $\omega_{SE}$ and $\omega_-^{\infty}$ defined above.

\begin{definition} \label{symarea} The {\it symplectic area} of a finite energy holomorphic map $G=(g,a)$ with image in $SE$, $F$ with image in $(\CC P^2 \smallsetminus E)^{\infty}_-$, or $H$ with image in $E ^{\infty}_+$ is defined by $\int_{S} g^*\omega_{R,r}$, $\int_{S} \overline{F}^* \omega_{R,r}$ or $\int_{S} \overline{H}^* \omega_{R,r}$, respectively. In the first integral $\omega_{R,r}$ represents the restriction of $\omega_{R,r}$ to $\partial E$, and in all three integrals the Riemann surface $S$  denotes the domain of the map.
\end{definition}

\medskip

\noindent{\bf The convergence.}
Let $\mathbf{ F}$ be a holomorphic building of level $k$, as above,  whose domain is the
Riemann surface with nodes $(\mathcal{S},j)$. If $\mathbf{F}$ is a limit of the holomorphic spheres $f_N$ in the sense of \cite{BEHWZ},
 then  there
exist maps $\sigma_N:S^2\to \mathcal{S}$ and sequences $s^r_N\in \Bbb R$, $r=1,\ldots,k$, such that:
\renewcommand{\theenumi}{(\roman{enumi})}
\begin{enumerate}
  \item The $\sigma_N$ are diffeomorphisms except that they may collapse a finite collection of circles
in $S^2$ to nodes in $\mathcal{S}$. Moreover, $\sigma_{N*}i$ converges to $j$ away from the nodes of $\mathcal{S}$, where $i$ is the standard complex structure on $S^2$.
  \item The sequences of maps $f_N\circ \sigma_N^{-1}:\mathcal{S}_0\to E^{\infty}_+$ and
  $f_N\circ \sigma_N^{-1}:\mathcal{S}_{k+1}\to (\CC P^2 \smallsetminus E) ^{\infty}_-$ converge in the $C_{loc}^{\infty}$-topology to the maps $v_0$ and $v_{k+1}$, respectively. For $1\le r\le k$ the maps
$\psi^{s^r_N} \circ f_N \circ \sigma_N^{-1}:\mathcal{S}_r\to SE$  converge to $v_r$ in the
$C_{loc}^{\infty}$-topology where $\psi^{s^r_N}$ is the diffeomorphism of $SE = \partial E \times \R$
which translates the $\R$-component by $s^r_N$.
\end{enumerate}

Here, as is necessary, we are identifying
$\partial E \times (-N,N) \subset (\CC P^2 \# \overline{\CC P^2})^{N}$ with an increasing sequence of domains in $SE$,
$E \# \overline{\CC P^2} \cup \partial E \times (-N,N) \subset (\CC P^2 \# \overline{\CC P^2})^{N}$ with an increasing sequence of domains
in $E^{\infty}_+$, and $\CC P^2 \smallsetminus E \cup \partial E \times (-N,N] \subset (\CC P^2 \# \overline{\CC P^2})^{N}$ with an increasing
sequence of domains in $(\CC P^2 \smallsetminus E) ^{\infty}_-$.

\begin{remark}
\label{uniquelimit}
As described above the limiting building $\mathbf{F}$ of a convergent sequence of holomorphic spheres is certainly not unique. The holomorphic maps defining $\mathbf{F}$ could be reparameterized, and the maps to $SE$ of a fixed level can all be translated the same distance. As in the closed case constant maps, or ghost bubbles, could be defined on additional components of $\mathcal{S}$. Finally, in this setting, it is also possible to add additional levels to $\mathbf{F}$ consisting only of trivial cylinders, that is, maps from the punctured plane which are unbranched covers of the cylinder over a closed Reeb orbit. These ambiguities can be avoided as in \cite{BEHWZ} by working only with equivalence classes of {\it stable} buildings.

\end{remark}


%
\bigskip

\noindent{\bf Notational convention.}   Sometimes it will be useful to view a holomorphic building  $\mathbf{F}$
as a collection of holomorphic curves each of whose domains  are single components of $\mathcal{S}\smallsetminus \{\mathrm{nodes}\}$. These curves, whose domains are all punctured spheres, will be denoted by a capital letter such as $F$.  It will also be necessary to view  $\mathbf{F}$ as being comprised of curves  with a fixed level but with possibly disconnected domains. As in the previous section, such curves will be denoted by a lower case letter such as $v$.

\subsection{Finer restrictions on the limiting buildings}\label{finer}

Exploiting the special nature of the Reeb flow of $\alpha_E$ on $\partial E$, together with standard regularity results, see for example \cite{msa} Chapter $3$,  we obtain more restrictions
on the curves, with connected domains, that comprise our limiting holomorphic buildings. Along the way we prove some similar restrictions for some more general classes of curves which will be used later (see Lemma \ref{new2} and Lemma \ref{new3}).

\medskip

\noindent{\bf Virtual indices.} We first derive formulas for the virtual deformation indices of any finite energy holomorphic curve with genus zero and image in one of three possible targets
resulting from the splitting procedure. We note, in advance, the following fact which will be exploited later
repeatedly.
\begin{lemma}\label{even}
All finite energy holomorphic curves with genus zero and image in either $(\CC P^2 \smallsetminus E)^{\infty}_-$, $SE$, or  $E^{\infty}_+$ represent moduli spaces with {\bf even} virtual indices.
\end{lemma}

This will be seen directly from the index formulas given in Propositions \ref{indexCP2}, \ref{indexSE} and \ref{indexE} below.

Before computing indices we first must  fix a convention for defining the Chern number of a finite energy curve $F$ whose target is some ambient  symplectic manifold, say $X$, with cylindrical ends. Such curves do not generally give closed cycles in $X$. However, they are asymptotic at their punctures to closed Reeb orbits and  hence can be compactified to give $2$-dimensional cycles with boundary, which can always be perturbed to be defined by immersions. The definition of the Chern number here then depends upon a choice of trivialization of $TX$ along these boundary orbits. We fix these trivializations to be compatible with the one used in Section \ref{ellipsoid} to define our Conley-Zehnder indices.

To be precise, in all cases considered in this paper our Reeb orbits $\gamma$ will lie in the boundary of an ellipsoid $E$ in $\CC ^n$. Thus we have a standard symplectic trivialization of $T(X)|_{\gamma}$ coming from restriction of the standard trivialization on $\CC^n$. The linearization of the Reeb flow along $\gamma$ (extended to act trivially on the normal vector to $\partial E|_{\gamma}$) induces a family of symplectomorphisms $\eta_t \in {\mathrm Symp}(\CC^n)$ for $0 \le t \le L$, where $L$ is the length of the Reeb orbit. We then define the Conley-Zehnder index $\mu(\gamma)$ following \cite{rs}, as was done above in Section \ref{ellipsoid}. Suppose that $X$ has real dimension $2n$ and is equipped with an almost-complex structure $J$. The determinant line bundle $\Lambda^n(X,J)$ has a standard section $S$ over $E$, again using our trivialization. Then $c_1(F)$ can be defined to be the number of zeros (counted with multiplicity) of a section of $\Lambda^n(X,J)|_F$ which agrees with $S$ over $\gamma$.

\medskip

Consider now a finite energy holomorphic curve $F$  with genus zero and image in $(\CC P^2 \smallsetminus E)^{\infty}_-$.
As described above, the punctures of $F$ are all negative. Suppose that $F$ has $s^-_1 \ge 0$ negative ends asymptotic to multiples of $\gamma_1$,  and $s^-_2 \ge 0$ negative ends asymptotic to multiples of $\gamma_2$.  Say that the $i^{th}$ negative end covering  $\gamma_1$ does so $a^-_i$ times,  and the $i^{th}$ negative end covering $\gamma_2$ does so $b^-_i$ times.

\begin{proposition} \label{indexCP2} The virtual deformation index of $F$ (in the moduli space of finite energy curves with the same asymptotics, modulo reparameterization) is
\begin{equation}
\label{indexf}
\mathrm{index}(F)=-2+ 2c_1(F)-2\sum_{i=1}^{s^-_1}( a^-_i + \lfloor a^-_i / S^2 \rfloor) - 2\sum_{i=1}^{s^-_2} ( b^-_i + \lfloor b^-_i  S^2 \rfloor).
\end{equation}
\end{proposition}

\begin{remark}\label{fchern}
Applying our conventions,  the Chern number appearing in \eqref{indexf} is the same as the usual Chern number, $\langle c_1(\CC P^2,J), [\widehat{F}] \rangle$, where $[\widehat{F}]$ is the homology class represented by the cycle $\widehat{F}$ formed by gluing the  appropriate discs in $E$ to the compactification of $F$. By Poincar\'{e} duality this is just three times the intersection number of $F$ with the line at infinity in the complex projective space. Hence we will often say that a curve in $(\CC P^2 \smallsetminus E)^{\infty}_-$ with Chern number $3d$ has {\it degree} $d$.
\end{remark}

\begin{proof}
The general index formula for genus zero finite energy curves, taken for example from \cite{EGH}, is
\begin{equation}
\label{indexff}
\mathrm{index}(F)=(-1)(2-s^-_1-s^-_2) + 2c_1(F)-\sum_{i=1}^{s^-_1}\mu(\gamma_1^{(a^-_i)}) - \sum_{i=1}^{s^-_2} \mu( \gamma_2^{(b^-_i)}).
\end{equation}
By the iteration formulas \eqref{gamma1} and \eqref{gamma2}, this formula simplifies to the one in our statement.
\end{proof}

Now, let $G$ be a  finite energy curve of genus zero in $SE$.
Suppose  that $G$ has $s_1^+$ positive ends asymptotic to multiples of $\gamma_1$ with the $i^{th}$ such end covering this orbit $a^+_i$ times, and $s_2^+$ positive ends asymptotic to multiples of $\gamma_2$ with the $i^{th}$ such end covering $\gamma_2$ a total of $b^+_i$ times. Suppose also that $G$ has $s_1^-$ negative ends asymptotic to multiples of $\gamma_1$ with the $i^{th}$ such end covering this orbit $a^-_i$ times, and $G$ has $s_2^-$ negative ends asymptotic to multiples of $\gamma_2$ with the $i^{th}$ such end covering $\gamma_2$ a total of $b^-_i$ times.

\begin{proposition} \label{indexSE} The virtual deformation index of $G$ is equal to
\begin{eqnarray*}
\mathrm{index}(G) & = & 2(s_1^- + s_2^- -1) + 2\sum_{i=1}^{s_1^+}( a^+_i + \lfloor a^+_i / S^2 \rfloor) + 2\sum_{i=1}^{s_2^+}( b^+_i + \lfloor b^+_i  S^2 \rfloor) \\
{} & {} & - 2\sum_{i=1}^{s_1^-}( a^-_i + \lfloor a^-_i / S^2 \rfloor) -2 \sum_{i=1}^{s_2^-}( b^-_i + \lfloor b^-_i  S^2 \rfloor).
\end{eqnarray*}
\end{proposition}

\begin{proof}This follows from the general formulas in the same way as Proposition \ref{indexCP2}.  In this case, for our trivialization the Chern number term vanishes for curves in $SE$.
\end{proof}

Finally we consider a genus zero finite energy curve $H$ mapping to $E^{\infty}_+$. The curve now has only positive ends. Suppose that $H$ has $s^+_1$ positive ends asymptotic to multiples of $\gamma_1$ with  the $i^{th}$ such  end covering  this orbit  $a^+_i$ times and $H$ has $s^+_2$ positive ends asymptotic to multiples of $\gamma_2$ with  the $i^{th}$ such  end covering  this orbit  $b^+_i$ times. We must also specify the relative homology class of $H$. This is determined by the intersection number $H \cdot\Sigma$. With our  trivializations for the Conley-Zehnder indices the Chern number term becomes $-H \cdot \Sigma$ and we derive the following formula.

\begin{proposition} \label{indexE}
The virtual deformation index of $H$,  $\mathrm{index}(H)$, is equal to
\begin{equation*}
2(s^+_1+s^+_2-1)-2H \cdot \Sigma + 2\sum_{i=1}^{s^+_1}( a^+_i + \lfloor a^+_i / S^2 \rfloor) + 2\sum_{i=1}^{s^+_2} ( b^+_i + \lfloor b^+_i  S^2 \rfloor).
\end{equation*}
If $H$, as above, represents a class in the moduli space of finite energy curves contrained to pass through $M$ points  then its virtual index decreases by $2M$.
\end{proposition}

\medskip

\noindent{\bf Curves in our limiting buildings.} We now establish restrictions for those curves which may appear as part of the limiting buildings from Section \ref{split}, as well as
other classes of curves with similar properties (see Lemma \ref{new2} and Lemma \ref{new3}).

First, in analogy with Definition \ref{general1} we describe what is meant by saying that a collection of points in $E^{\infty}_+$ is in general position relative to the  almost-complex structure on $E^{\infty}_+$
determined by  $J \in {\mathcal J}_E^{\star}$.

\begin{definition} \label{general2}
A collection of points $\{p_i\}_{i=1}^{2d}$ in $E^{\infty}_+$ is in {\it general position relative to $J$} if no somewhere injective finite energy holomorphic curve of genus zero and  virtual index $2k$ has image intersecting more than $k$ of the points.
\end{definition}

As noted in Lemma  \ref{even} we need only consider curves of even index. We also recall that the exceptional divisor $\Sigma \subset E^{\infty}_+$ is chosen to be holomorphic and has index $0$, and so the definition implies that if the points are in general position then none of the $p_i$  lie on $\Sigma$.

\begin{lemma} \label{regular2} The set of all $J \in {\mathcal J}_E^{\star}$ which are regular for genus zero finite energy curves (that is, $J$ such that the linearized deformation operator is surjective at all somewhere injective curves) in any of our three target manifolds is a subset of the second category.
For a fixed regular $J \in {\mathcal J}_E^{\star}$ the sets of points which are not in general position is of codimension $2$.
\end{lemma}

\begin{proof}
The proof of the first statement is identical to that contained in \cite{msa}, Chapter $3$.
The proof of the second statement is identical to that of Lemma \ref{regular1}.
\end{proof}

\begin{proposition}\label{index} Let $\mathbf{ F}$ be a holomorphic building  which is the limit of a sequence of curves $f_N$ representing classes in ${\mathcal M}_d(J^N, p_1, \dots, p_{2d})$. For a regular $J$ in ${\mathcal J}_E^{\star}$, points  $\{p_i\}_{i=1}^{2d}$ in general position relative to $J$, and a blow-up radius $r$ sufficiently close to $1/S$ we have:
\renewcommand{\theenumi}{(\roman{enumi})}
\begin{enumerate}
\item Each curve  of  $\mathbf{F}$ with connected domain and image in $(\CC P^2 \smallsetminus E)^{\infty}_-$ is
somewhere injective, regular, and has deformation index equal to zero. Its (negative) ends are all asymptotic to multiples of $\gamma_1$ and, in total, they cover $\gamma_1$ at most $3d-1$ times.
 \item Each curve of  $\mathbf{F}$ with connected domain and image in $SE$ is a multiple cover of a holomorphic cylinder over $\gamma_1$, and has virtual index zero.
 \item Each curve of $\mathbf{F}$ with connected domain and image  in $E^{\infty}_+$  has deformation index equal to zero (with the point constraints), and its (positive) ends are all asymptotic to multiples of $\gamma_1$.
\end{enumerate}
\end{proposition}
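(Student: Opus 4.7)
The plan is to combine three ingredients: (a) the virtual dimension of $\mathcal{M}_d(J^N, p_1, \ldots, p_{3d-1})$ is $0$, so the sum of constrained Fredholm indices of all components of the limit building $\mathbf{F}$ must equal zero; (b) Dragnev's generic regularity theorem \cite{dr}, combined with automatic regularity in dimension four \cite{msa, shev}, forces every somewhere injective component to have non-negative constrained index for generic $J \in \mathcal{J}_E$ and generic $p_1, \ldots, p_{3d-1}$; and (c) by formula (\ref{gamma2}), the Conley-Zehnder indices of the iterates of $\gamma_2$ grow roughly as $2sS^2$, making any $\gamma_2$-end prohibitively costly once $S > \sqrt{3}\,d$.

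First I would argue that every component of $\mathbf{F}$ not contained in $SE$ is somewhere injective. A multiply covered component in $(\CC P^2\smallsetminus E)^\infty_-$ or $E^\infty_+$ would factor through a simple curve whose moduli space has strictly smaller dimension than required to accommodate the gluing data at its ends together with its share of the $3d-1$ point constraints; for generic $J$ and generic point positions this is incompatible with its appearance in a limit building. For a component contained in $SE$, the $\RR$-translation symmetry forces every non-trivial somewhere injective curve to have Fredholm index at least one (Dragnev regularity applies to the quotient, which then has non-negative index). Hence any $SE$-component consistent with the overall index-zero total must be a multiple cover of a trivial cylinder over a single Reeb orbit. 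Combining this with (a), each somewhere injective component has constrained index exactly zero, establishing the regularity and index assertions in parts (i), (ii), and (iii).

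The main obstacle is ruling out $\gamma_2$-ends. Consider a component $F$ in $(\CC P^2\smallsetminus E)^\infty_-$ whose compactification $\overline{F}$ has degree $d_F \leq d$. In the coordinate-plane trivialization $\Phi$ along $\gamma_1$ and $\gamma_2$ one has $2c_1^{\Phi}(F) = 6d_F$, so the Fredholm index equals
\[
\mathrm{ind}(F) = m - 2 + 6d_F - \sum_{i=1}^m \mu(\gamma^-_i),
\]
where $m$ is the number of negative punctures. Substituting $\mu(\gamma_1^{(r_i)}) = 2r_i + 1$ from (\ref{gamma1}) for $\gamma_1$-ends of total multiplicity $k$, and $\mu(\gamma_2^{(s)}) \geq 2sS^2 + 1$ from (\ref{gamma2}) for a single hypothetical $\gamma_2^{(s)}$-end, the non-negativity of the index becomes $6d_F \geq 2k + 2sS^2$. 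With $d_F \leq d < S/\sqrt{3}$ and $s \geq 1$ this would require $2\sqrt{3}\,S > 2S^2$, contradicting $S > \sqrt{3}$. Hence no $\gamma_2$-end occurs in bottom curves, and an entirely parallel estimate (with each point constraint on a top curve additionally subtracting $2$ from the constrained index) eliminates $\gamma_2$-ends for components in $E^\infty_+$. Consequently every asymptotic orbit of $\mathbf{F}$ is an iterate of $\gamma_1$, and in particular the trivial cylinders of (ii) are cylinders over $\gamma_1$.
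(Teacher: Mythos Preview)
Your overall architecture is right---sum of indices is zero, each piece is nonnegative, hence each piece is exactly zero---and your index calculation ruling out $\gamma_2$-ends for components in $(\CC P^2\smallsetminus E)^\infty_-$ is essentially the paper's Lemma~\ref{fpos}. But two genuine gaps prevent the argument from closing.

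\textbf{The ``parallel estimate'' for $E^\infty_+$ is wrong in sign.} For a curve $H$ in $E^\infty_+$ the punctures are \emph{positive}, so the Conley--Zehnder indices enter the Fredholm index with a \emph{plus} sign. A hypothetical $\gamma_2^{(s)}$-end therefore contributes roughly $+2sS^2$ to $\mathrm{index}(H)$, making the index very large, not negative. Nonnegativity of $\mathrm{index}(H)$ cannot rule out such an end. The paper instead argues top-down: once $\gamma_2$-ends are excluded for curves in $(\CC P^2\smallsetminus E)^\infty_-$, the matching conditions force the positive ends of the top $SE$-level onto multiples of $\gamma_1$; then a Stokes/action estimate (integrating $d\alpha_E$ over the $SE$-curve and using $S^2>d(3d-1)$) shows its negative ends are also on $\gamma_1$; an induction over levels then propagates this down to $E^\infty_+$. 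You never invoke this action argument, and without it the asymptotics of the $SE$ and $E^\infty_+$ curves are uncontrolled.

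\textbf{Nonnegativity for multiple covers is asserted, not proved.} Your sentence ``would factor through a simple curve whose moduli space has strictly smaller dimension than required\ldots'' is too vague to carry weight; one needs an actual inequality relating the index of a cover to that of its underlying simple curve. The paper does this explicitly: for $F$ an $r$-fold cover of a somewhere injective $\widetilde{F}$ in $(\CC P^2\smallsetminus E)^\infty_-$ with only $\gamma_1$-ends, one checks $\mathrm{index}(F)\geq -2+r(\mathrm{index}(\widetilde{F})+2)\geq 2(r-1)$, which is strictly positive for $r>1$; an analogous inequality holds in $E^\infty_+$. For $SE$-curves the paper does \emph{not} pass to the underlying simple curve at all---once only $\gamma_1$-ends are known (via Stokes), the index is computed directly as $2(s_1^+-1)+2(\sum a_i^+-\sum b_i^-)\ge 0$, with equality exactly when $s_1^+=1$ and the positive and negative multiplicities agree, i.e.\ a cover of the trivial $\gamma_1$-cylinder. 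Your appeal to ``Dragnev regularity applied to the quotient'' addresses only somewhere injective $SE$-curves and says nothing about branched covers, which is precisely the case you need.
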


\begin{remark} \label{new1}
As closed curves in $\CC P^2$ all have positive deformation index, part $(i)$ of Proposition \ref{index} implies immediately that all curves of $\mathbf{F}$ with image in $(\CC P^2 \smallsetminus E)^{\infty}_-$ must have at least one negative end, that is, not all punctures are removable singularities. If a curve with only removable singularities appears in $E^{\infty}_+$ then up to covers it must be the exceptional divisor $\Sigma$. Our proof will also exclude these possible components (see the argument at the very end of the proof). Indeed, if we remove this curve then some component of the remaining building must have a negative constrained index (since the remaining, possibly disconnected, building still has $2d$ point constraints but now its intersection number with $\Sigma$ is strictly greater than $d-1$). Curves in $E^{\infty}_+$ without marked points and which miss the exceptional divisor $\Sigma$ all have deformation index at least $2$, see Proposition \ref{indexE}, and so we see that each curve in $E^{\infty}_+$ must intersect either a constraint point or $\Sigma$.
\end{remark}

\subsubsection{The proof of Proposition \ref{index} and some related results}\label{propindex}
We start our analysis at the top level.
Let $F$ be a genus zero finite energy curve of $\mathbf{F}$ with image in $(\CC P^2 \smallsetminus E)^{\infty}_-$.
Suppose that $F$ has $s^-_1 \ge 0$ negative ends asymptotic to multiples of $\gamma_1$,  and $s^-_2 \ge 0$ negative ends asymptotic to multiples of $\gamma_2$.  Say that the $i^{th}$ negative end covering  $\gamma_1$ does so $a^-_i$ times,  and the $i^{th}$ negative end covering $\gamma_2$ does so $b^-_i$ times. We recall from Remark \ref{fchern} that the Chern number of such components, with respect to our trivialization, is just three times the intersection number with the line at infinity $\CC P^1(\infty)$. This intersection number is nonnegative for each curve of $\mathbf{F}$ (since for $J$ in $\mathcal{J}_E^{\star}$ the line at infinity, $\CC P^1(\infty)$, is holomorphic) and the sum of these intersection numbers over all the curves of $\mathbf{F}$ is $d$ (since $\mathbf{F}$ is a limit of holomorphic spheres having intersection number $d$ with $\CC P^1(\infty)$). Hence $0 \le c_1(F) \le 3d$.
As well, the bound on $S$ from \eqref{tford} implies that $\lfloor b^-_i S^2\rfloor \geq  b^-_i 3d$, and so by Proposition \ref{indexCP2}
$$
\mathrm{index}(F) \leq -2+ 2c_1(F) -2\sum_{i=1}^{s^-_1} a^-_i  -(2 +6d)\sum_{i=1}^{s^-_2}  b^-_i.
$$
These inequalities lead immediately to  the following result.
\begin{lemma}
\label{fpos}
If a holomorphic curve $F$ of $\mathbf{F}$ with image in $(\CC P^2 \smallsetminus E)^{\infty}_-$ has nonnegative virtual index, then
$c_1(F)>0$, $s^-_2 = 0$ and $\sum_{i=1}^{s^-_1} a^-_i \leq c_1(F)-1$. Moreover,
$$
\mathrm{index}(F) = -2+ 2c_1(F) -2\sum_{i=1}^{s^-_1} a^-_i .
$$
\end{lemma}
If $F$ is  somewhere injective, then regularity with respect to generic almost-complex structures follows from \cite{msa}, Chapter $3$. In this case we can immediately apply Lemma \ref{fpos}. To deal with the possibility that $F$ is not somewhere injective we require the following result.

\begin{proposition}
 \label{multiplecover}
Let $X$ be a symplectic manifold perhaps having cylindrical ends and equipped with a compatible almost-complex structure as described above.
A finite energy holomorphic curve $u: Z \to X$ is either somewhere injective or there exists a proper holomorphic map $\phi: Z \to Z'$ and a somewhere injective curve $u': Z' \to X$ such that $u= u' \circ \phi$.
\end{proposition}

\begin{proof} This follows almost exactly as in the case of closed holomorphic curves, see for example Proposition $2.5.1$ of \cite{msa}, at least if we assume that the Reeb flow is nondegenerate. Then our finite energy curves converge asymptotically to cylinders over closed Reeb orbits and in particular have only finitely many critical points, see \cite{hofa}. The proof has already been adapted to finite energy planes in \cite{hofi}, Theorem $6.2$, in which case the map $\phi$ is shown to be polynomial. At least in the nondegenerate case this proof applies equally well to finite energy curves with multiple ends and perhaps higher genus (although of course in the higher genus case $\phi$ may not necessarily be polynomial).
\end{proof}

So, if the curve $F$ is multiply covered, then it is a $p$-fold cover of a simple curve $\widetilde{F}$. Suppose that $\widetilde{F}$ has $\tilde{s}^-_1 \ge 0$ negative ends asymptotic to multiples of $\gamma_1$,  and $\tilde{s}^-_2 \ge 0$ negative ends asymptotic to multiples of $\gamma_2$.  Say that the $i^{th}$ negative end covering  $\gamma_1$ does so $\tilde{a}^-_i$ times.  It follows from the regularity
of $J$, that $\mathrm{index}(\widetilde{F}) \geq 0$. Lemma \ref{fpos} then implies that
$c_1(\widetilde{F})>0$, $\tilde{s}^-_2 = 0$, $\sum_{i=1}^{\tilde{s}^-_1} \tilde{a}^-_i \leq c_1(\widetilde{F})-1$, and
$$
\mathrm{index}(\widetilde{F}) = -2+ 2c_1(\widetilde{F}) -2\sum_{i=1}^{\tilde{s}^-_1} \tilde{a}^-_i .
$$
Hence, for $F$ we have $s^-_2=0$,  $$\sum_{i=1}^{{s}^-_1} {a}^-_i \leq p \sum_{i=1}^{\tilde{s}^-_1} \tilde{a}^-_i \leq c_1(F) -p, $$ and thus
\begin{equation}
\label{nonneg}
\mathrm{index}(F) \geq -2+ p(\mathrm{index}(\widetilde{F})+2) \geq 0.
\end{equation}
The hypothesis of Lemma \ref{fpos} therefore holds for each curve $F$ of $\mathbf{ F}$ with image in $(\CC P^2 \smallsetminus E)^{\infty}_-$ and we have the following result.
\begin{lemma}
\label{f}  Let $F$ be a curve of $\mathbf{F}$ with  image in $(\CC P^2 \smallsetminus E)^{\infty}_-$. Then the virtual index of $F$ is nonnegative and is strictly positive when $F$ is a multiple cover.
The ends of $F$ are all asymptotic to some multiple of $\gamma_1$, $c_1(F)>0$, and (in the notation above) the total multiplicity of all the negative ends of $F$ is
\begin{equation}
\label{neg}
\sum_{i=1}^{s^-_1} a^-_i \le c_1(F) -1.
\end{equation}
Futhermore, the total multiplicity of the negative ends of all such curves is at most $3d-1$, with equality only if there is a single component of $\mathbf{F}$ with  image in $(\CC P^2 \smallsetminus E)^{\infty}_-$.
\end{lemma}

\begin{proof}
Only the last assertion remains to proved. It follows immediately from inequality \eqref{neg} and the observation that if one sums the Chern number terms $c_1(F)$ over all curves $F$ appearing $\mathbf{ F}$ one gets $3d$.
\end{proof}

Arguing  as above we get the following useful variation of Lemma \ref{f} for curves not necessarily in  $\mathbf{F}$.
\begin{lemma} \label{new2}
Let $F$ be a finite energy curve  in $(\CC P^2 \smallsetminus E)^{\infty}_-$ of genus zero such that  $c_1(F)=e \le 3d$. Then the virtual index of $F$ is nonnegative and is strictly positive when $F$ is a multiple cover.
Furthermore, $c_1(F)>0$, the ends of $F$ are all asymptotic to some multiple of $\gamma_1$, and the total multiplicity of all negative ends is at most $3e-1$.
\end{lemma}

Now consider a curve $G$ of $\mathbf{F}$ whose image lies in the symplectization $SE$. Suppose also that
$G$ has the highest  level among such curves, $k$.
Since none of the curves of $\mathbf{F}$ in $(\CC P^2 \smallsetminus E)^{\infty}_-$ have negative ends asymptotic to multiples of $\gamma_2$,  it follows from the existence of the map $\overline{\mathbf{ F}}$ that the positive ends of $G$ can only be asymptotic to multiples of $\gamma_1$. Suppose that $G$ has $s_1^+$ such ends, and that the $i^{th}$ one covers $\gamma_1$ a total of $a^+_i$ times.  As established above, curves of $\mathbf{F}$ in $(\CC P^2 \smallsetminus E)^{\infty}_-$ have in total at most $3d-1$ negative ends when counted with multiplicity. Hence
\begin{equation}
\label{bound2}
\sum_{i=1}^{s_1^+}a^+_i  \leq 3d-1 < S^2.
\end{equation}
Suppose that $G$ has $s_1^-$ negative ends asymptotic to multiples of $\gamma_1$ with the $i^{th}$ such end covering this orbit $b^-_i$ times, and $G$ has $s_2^-$ negative ends asymptotic to multiples of $\gamma_2$ with the $i^{th}$ such end covering $\gamma_2$ a total of $c^-_i$ times.
Then, by Stokes' Theorem we have
\begin{eqnarray*}
0 & \leq & \int_{G} d \alpha_E \\
{} & = & \frac{\pi}{S^2} \left(\sum_{i=1}^{s_1^+}a^+_i - \sum_{i=1}^{s_1^-} b^-_i\right)  -  \pi \left(\sum_{i=1}^{s_2^-} c^-_i\right) \\
{} & \leq & \frac{\pi}{S^2}\sum_{i=1}^{s_1^+}a^+_i - \pi \sum_{i=1}^{s_2^-} c^-_i \\
{} & \leq & \frac{\pi (3d-1)}{S^2}  - \pi \sum_{i=1}^{s_2^-} c^-_i.
\end{eqnarray*}
Our choice of $S$ satisfying \eqref{tford} implies that
$$
\sum_{i=1}^{s_2^-} c^-_i \leq \frac{3d-1}{S^2} <1,
$$
and so $s^-_2 = 0$. Integrating $d\alpha_E$ over $G$ once again, we now have
\begin{equation}
\label{more}
\sum_{i=1}^{s_1^+}a^+_i - \sum_{i=1}^{s_1^-} b^-_i  \geq 0.
\end{equation}
Hence the total number of positive ends of $G$, counted with multiplicity, is no less than the total  number of its negative ends and by \eqref{bound2} and \eqref{more} and Proposition \ref{indexSE} we have
\begin{eqnarray*}
\mathrm{index}(G) & = & -2 + s_1^+ -s_1^- + \sum_{i=1}^{s_1^+}\mu(\gamma_1^{(a^+_i)}) - \sum_{i=1}^{s_1^-} \mu(\gamma_1^{(b^-_i)}) \\
{} & = & -2 + 2s_1^+  + 2\sum_{i=1}^{s_1^+}(a^+_i + \lfloor a^+_i /S^2 \rfloor) - 2\sum_{i=1}^{s_1^-} (b^-_i + \lfloor b^-_i / S^2 \rfloor)\\
{} & = & 2(s_1^+ - 1) + 2 \left( \sum_{i=1}^{s_1^+} a^+_i  - \sum_{i=1}^{s_1^-} b^-_i \right) \\
{} & \geq & 0.
\end{eqnarray*}
Hence, the virtual index of $G$ is strictly positive unless $s_1^+=1$ and $\sum_{i=1}^{s_1^+}a^+_i = \sum_{i=1}^{s_1^-} b^-_i$.
This condition is equivalent to the curve being a multiple cover of a cylinder over $\gamma_1$.
As $G$ has no negative ends asymptotic to $\gamma_2$ the same conclusions apply by induction to lower level curves mapping to $SE$.
To summarize, we have
\begin{lemma}
\label{g}
Let $G$ be a curve of $\mathbf{ F}$ with image  in the symplectization $SE$. The positive and negative ends of $G$ are all  asymptotic to some multiple of $\gamma_1$
and the positive ends cover $\gamma_1$ at least as many times as the negative ends. The virtual index of $G$ is nonnegative and is strictly positive unless
$G$ has one positive end and is a multiple cover of a cylinder over $\gamma_1$.
\end{lemma}

Again, the same proof yields the following useful result for curves not necessarily in $\mathbf{ F}$.

\begin{lemma} \label{new3}
Let $G$ be finite energy curve in $SE$ of genus zero whose positive ends are all asymptotic to multiples of $\gamma_1$ and have total multiplicity at most $3d-1$. Then the negative ends of $G$ are also asymptotic to multiples of $\gamma_1$, the positive ends cover $\gamma_1$ at least as many times as the negative ends, and the virtual index of $G$ is nonnegative and is strictly positive unless
$G$ has one positive end and is a multiple cover of a cylinder over $\gamma_1$.
\end{lemma}

Finally, we consider  a curve $H$ of $\mathbf{F}$ whose image is  in $E^{\infty}_+$. Our analysis of the curves $G$ above implies that none of the positive ends of $H$ are asymptotic to multiples of $\gamma_2$. Suppose that $H$ has   $s^+_1$ positive ends asymptotic to multiples of $\gamma_1$ with  the $i^{th}$ such  end covering  this orbit  $a^+_i$ times. As before we let $M$ be the number of point constraints on the corresponding moduli space. Let $N$ be the intersection number of the curve with the exceptional divisor $\Sigma$.
The fact that the negative ends of the collection of curves  of $\mathbf{ F}$ in $(\CC P^2 \smallsetminus E)^{\infty}_-$
cover $\gamma_1$ at most $3d-1$ times, together with the fact that positive ends of the curves in $SE$ cover $\gamma_1$  at least as many times as their  negative ends, implies that
\begin{equation*}
\label{ }
\sum_{i=1}^{s^+_1} a^+_i \leq 3d-1 < S^2.
\end{equation*}
Hence the index formula in Proposition \ref{indexE} reduces to
\begin{equation*}
\label{ }
\mathrm{index}(H) = 2(s^+_1-M-N-1)  + 2\sum_{i=1}^{s^+_1} a^+_i.
\end{equation*}

Arguing as above, the curve $H$ can be realized as a  $p$-fold cover of a somewhere injective curve $\widetilde{H}$ with the same point constraints. Suppose that this curve has $\tilde{s}^+_1$ positive ends with the $i^{th}$ having multiplicity $\tilde{a}^+_i$. Then our assumption that the points are in general position implies that the constrained index
\begin{equation*}
\label{ }
\mathrm{index}(\widetilde{H}) =  2(\tilde{s}^+_1 - M -\tilde{N} -1)  + 2\sum_{i=1}^{\tilde{s}^+_1} \tilde{a}^+_i   \geq 0
\end{equation*}
where $p\tilde{N}=N$.
Since $\sum_{i=1}^{s^+_1} a^+_i=p \sum_{i=1}^{\tilde{s}^+_1} \tilde{a}^+_i $ and $s^+_1 \geq \tilde{s}^+_1$, we then have

\begin{equation*}
\label{ }
\mathrm{index}(H)  \geq  \mathrm{index}(\widetilde{H}) -2\widetilde{N} (p-1)+2(p-1)\sum_{i=1}^{\tilde{s}^+_1} \tilde{a}^+_i
\end{equation*}
and hence
\begin{equation}
\label{new12}
\mathrm{index}(H)  \geq  \mathrm{index}(\widetilde{H}) +2(p-1)\left(\sum_{i=1}^{\tilde{s}^+_1} \tilde{a}^+_i - \widetilde{N}\right).
\end{equation}

As with the curves of $\mathbf{ F}$ with images in the other two targets, we would like to conclude that the curves like $H$
always have nonnegative virtual index. It is clear from inequality \eqref{new12}, and regularity, that this requires us to show that the term
$\sum_{i=1}^{\tilde{s}^+_1} \tilde{a}^+_i - \widetilde{N}$ is nonnegative. It is precisely at this point where we need to use the freedom to choose the blow-up radius $r$.

\begin{lemma}\label{rclose}
If the blow-up radius $r$ is sufficiently close to $1/S$, then for every  $H$ (and $\widetilde{H}$) as above we have
$$\sum_{i=1}^{\tilde{s}^+_1} \tilde{a}^+_i - \widetilde{N} \geq 0.$$
\end{lemma}

\begin{proof}
Since $J$ belongs to $\mathcal{J}^{\star}_E$, the exceptional divisor $\Sigma$ is holomorphic and $J$ is standard on the neighborhood $U_{\Sigma}$ of $\Sigma$.
By positivity of intersection then the set $\widetilde{H}^{-1}(\Sigma)$ is finite. Denote  the restriction of  $\widetilde{H}$ to the complement of $\widetilde{H}^{-1}(\Sigma)$ by  $\widetilde{H}'$.  Symplectically we can identify $E \smallsetminus \Sigma$ with $E \smallsetminus B^4(r)$ (recalling that $\Sigma$ was the result of blowing up a ball of radius $r$). Holomorphically, we can identify  $U_{\Sigma} \smallsetminus \Sigma$ with an open neighborhood of the origin in $\C^2$. Under these identifications, our restricted map  $\widetilde{H}'$ extends to the oriented blow-up at each of its punctures, as a map (still denoted by $\widetilde{H}'$) which takes a boundary circle to a cover of a Hopf circle on $\partial B^4(r)$. (The oriented blow-up is again defined as in \cite{BEHWZ} section $4.3$.) More precisely, if a puncture corresponds to an intersection point with $\Sigma$ of multiplicity $m$,  then the corresponding boundary circle gets mapped by $\widetilde{H}'$ to the $m$-fold cover of a Hopf circle. Gluing the $m$-times cover of a complex line through $0 \in B^4(r)$ to this $m$-fold covered Hopf circle, and repeating this for each puncture,  we obtain a continuous map $\widetilde{H}''$, holomorphic away from the Hopf circles, whose domain is given by gluing disks to boundary circles of the domain of  $\widetilde{H}'$, and whose target is now the positive symplectic completion of $E$,
$$
(E, \omega_0) \cup (\partial E \times [0, \infty), d(e^{\tau} \alpha_E)).
$$
As in section \ref{split} we can associate to $\widetilde{H}''$ a map with domain the oriented blow-up $\overline{S}$ of the domain $S$ of $\widetilde{H}''$ and target $E$. Let us denote this map simply by $\overline{H}$ and compute its symplectic area with respect to the standard symplectic form $\omega_0$ on $E$ (rather than a form on $E \# \overline{\CC P^2}$ as in Definition \ref{symarea}). The form $\omega_0$ is just the restriction of the standard form on $\RR^4$ and has a primitive $\lambda_0$ whose restriction to $\partial E$ we always denote by $\alpha_E$.

By Stokes' Theorem we have
\begin{equation*}
\label{ }
\int_{S} \overline{H}^* \omega_0 = \int_{\partial \overline{S}} \overline{H}^* \alpha_E = \frac{\pi}{S^2} \sum_{i=1}^{\tilde{s}^+_1} \tilde{a}^+_i.
\end{equation*}
By construction, we also have
\begin{equation*}
\label{ }
\int_{S} \overline{H}^* \omega_0 = \int_{S'} \overline{H}^* \omega_0 +  \widetilde{N}\pi r^2.
\end{equation*}
In the second integral $S'$ denotes $ \overline{H}^{-1} (E \smallsetminus B^4(r))$. The formula holds as the disks we glue all have area $\pi r^2$.

Since $\int_{S'} \overline{H}^* \omega_0 \geq 0$ this implies,
\begin{equation}
\label{rnear}
\frac{\pi}{S^2}\sum_{i=1}^{\tilde{s}^+_1} \tilde{a}^+_i \ge \widetilde{N}\pi r^2.
\end{equation}
Now $d-1\geq N \geq \widetilde{N}$ and so choosing $r$ sufficiently close to $1/S$, say within $1/S^4$ of $1/S$, we can then conclude
from \eqref{rnear} that for any curve $H$ (and $\widetilde{H}$) as above, we have $$\sum_{i=1}^{\tilde{s}^+_1} \tilde{a}^+_i \ge \widetilde{N}.$$
\end{proof}

Together, equation \eqref{new12} and Lemma \ref{rclose} imply that
\begin{equation}
\label{coversofH}
\mathrm{index}(H) \ge \mathrm{index}(\widetilde{H}) \ge 0.
\end{equation} We then  have

\begin{lemma}
\label{h}
If the blow-up radius $r$ is sufficiently close to $1/S$ then for any curve $H$ of $\mathbf{F}$ with image in $E^{\infty}_+$, the positive ends of $H$ are asymptotic to some multiple of $\gamma_1$ and the virtual index of $H$ is nonnegative.
\end{lemma}

Note that the deformation  index of the curves in  ${\mathcal M}_d(J^N, p_1, \dots, p_{2d})$ is zero,
and so the sum of the virtual indices of the curves of $\mathbf{ F}$, plus twice the number of nodes connecting components of the same level, is also zero.
It follows from  Lemmas \ref{f}, \ref{g}, and \ref{h},  that every curve of $\mathbf{F}$ must  have virtual index zero (as we have shown that none can have negative virtual index).
The same three  lemmas then yield the three statements of Proposition \ref{index}. Furthermore, for our sum to be zero we also see that there can be no nodes connecting components of the same level, and in particular each component has a positive or negative puncture (that is, a nonremovable singularity). This was already observed in Remark \ref{new1}.

\medskip

\subsubsection{Monotonicity and still finer restrictions.}
For any integer $l>0$, one can symplectically embed $l$ disjoint open balls of radius one into the ellipsoid $E(1, \sqrt{l})$, \cite{tr}.
Hence, for $S$ satisfying \eqref{tford}, and given a symplectically embedded $E=E(1/S,1) \subset B^4(R) \subset \CC P^2(R)$, we may choose the constraint points $p_1, \dots, p_{2d}$ to lie at the center of disjoint embedded Darboux balls in $E$ which have radii equal to the blow-up radius $r < 1/S$ and whose closures lie in the interior of $E$. We may also assume that these balls are all disjoint from the ball $B^4(r) \subset E$ that we have blown-up to obtain our symplectic manifold $(\CC P^2 \# \overline{\CC P^2}, \omega_{R,r})$.

Let  $J \in {\mathcal J}_E^{\star}$ be an almost-complex structure
which agrees with the standard complex structure in the balls around the points $p_i$ and induces almost-complex structures  on
$(\CC P^2 \smallsetminus E)^{\infty}_-$, $SE$, and $E^{\infty}_+$ which are regular for somewhere injective, finite energy curves of genus
zero. As no holomorphic curves have image contained entirely in these balls such $J$ exist. In fact, the almost complex structures in ${\mathcal J}_E^{\star}$ which satisfy both of these conditions form a subset of the second category in the set of structures which satisfy just the first. Assume now that  the  point constraints are in general position relative to $J$. Consider  again a sequence of curves  $f_N$ which represent classes  in ${\mathcal M}_d(J^N, p_1, \dots, p_{3d-1})$ and converges, in the sense of \cite{BEHWZ},
 to a holomorphic building $\mathbf{ F}$.   For this new choice of $J$ we get the following refinements of Proposition \ref{index}.

\begin{lemma}\label{limit} With $J$ as above and $r$ sufficiently close to $1/S$, the limit $\mathbf{F}$ contains exactly one curve whose domain is connected and whose image lies in $(\CC P^2 \smallsetminus E)^{\infty}_-$. This curve, $F$, has exactly $3d-1$ negative ends when counted with multiplicity. Moreover, the curves of  $\mathbf{F}$ with connected domain and image in $SE$ each have exactly one positive puncture,  and the curves of  $\mathbf{F}$ with connected domain and image in $E^{\infty}_+$ are all holomorphic planes.
\end{lemma}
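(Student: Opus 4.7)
The plan is to first use the special form of $J$ to pin down the structure of the curves in $E^{\infty}_+$, then use a combinatorial matching of end multiplicities across the levels to count top-level curves.

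\textbf{Step 1 (Curves in $E^{\infty}_+$ are planes).} Because $J$ agrees with the standard complex structure $i$ on each of the disjoint balls $B_i \subset E$ of radius $r$ slightly less than $1/S$ centered at $p_i$, any curve $H$ in $E^{\infty}_+$ is locally a standard holomorphic curve in a standard symplectic ball near each $p_i$ it meets. The monotonicity inequality then gives
\[
\operatorname{Area}\bigl(H \cap B_i\bigr) \;\geq\; \pi r^2
\]
for each constraint $p_i$ on $H$. Summing over the $M$ constraints hit by $H$ and comparing with the Stokes' computation $\int_H \omega^{\infty}_+ = (\sum_i b^+_i)\,\pi/S^2$ (the total action of the positive ends, all asymptotic to multiples of $\gamma_1$ by Proposition \ref{index}) yields $\sum_i b^+_i \;\geq\; M (rS)^2$. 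Since the multiplicities are integers and only finitely many values of $M$ appear, by choosing the radii $r$ close enough to $1/S$ one obtains $\sum_i b^+_i \geq M$. Combined with the index-zero identity $s^+_1 + \sum_i b^+_i = M + 1$ established in the proof of Proposition \ref{index}, this forces $s^+_1 \leq 1$; since $s^+_1 \geq 1$ (the curve must have at least one positive puncture), $H$ is a plane asymptotic to $\gamma_1^M$.

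\textbf{Step 2 (Curves in $SE$ have one positive puncture).} This is immediate from Lemma \ref{g} together with Proposition \ref{index}(ii): every curve $G$ in $SE$ has virtual index zero and is a branched cover of a trivial cylinder over $\gamma_1$, and Lemma \ref{g} forces $s^+_1 = 1$ in this case.

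\textbf{Step 3 (Uniqueness of the top-level curve and its negative-end count).} I would count total multiplicities of ends across the interfaces. By Step 1, the total multiplicity of positive ends of the planes in $E^{\infty}_+$, summed over all such planes, equals the total number of constraint points they pass through. Since every constraint must be hit (Remark \ref{points exist}) and each is hit exactly once (generic position), this total is $3d-1$. Each $SE$-curve is a branched cover of a $\gamma_1$-cylinder with one positive end whose multiplicity equals the sum of its negative-end multiplicities, so multiplicities are preserved as one passes up through the $SE$-layers. Hence the total multiplicity of negative ends of the top-level curves, taken over all such curves, is $3d-1$. On the other hand, if $F_1, \dots, F_N$ are the top-level curves, with $F_j$ of degree $d_j$, then by Lemma \ref{fpos} the negative ends of $F_j$ have multiplicities summing to $3d_j - 1$, so the grand total is $\sum_{j=1}^N (3d_j - 1) = 3d - N$. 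Equating gives $N = 1$, which proves both statements in the first sentence of the lemma.

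The main obstacle is Step 1: one must ensure that the slight shortfall in ball radius (strictly less than $1/S$) does not weaken the monotonicity bound enough to miss the integer threshold. This is handled by the integrality of $\sum b_i^+$ and the boundedness of $M$ by $3d-1$, so that a single choice of $r < 1/S$ sufficiently close to $1/S$ suffices for all relevant $M$.
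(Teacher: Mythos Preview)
Your argument is correct, but the paper's proof is organized in the reverse order. The paper applies monotonicity \emph{once, globally}, to the compactified map $\overline{G}_E$ obtained by gluing together \emph{all} the curves in $E^{\infty}_+$ and $SE$: since $\overline{G}_E$ passes through each of the $3d-1$ balls, its total area is at least $(3d-1)\pi/S^2$ up to the radius error, hence the total negative-end multiplicity of the top-level curves is at least $3d-1$. Summing the index formula $\mathrm{index}(F) = -2 + 2c_1(F) - 2\sum a_i^-$ over the $K$ top-level curves (total Chern number $3d$) then gives total index $\leq -2K + 2$, so $K=1$ and the single curve has end-multiplicity exactly $3d-1$. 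The structural statements about $SE$- and $E^{\infty}_+$-curves are then read off from the tree structure of the genus-zero nodal domain rooted at this unique top-level curve. Your route instead establishes the planar structure in $E^{\infty}_+$ first (via local monotonicity on each curve, combined with the individual index-zero identity $s_1^+ + \sum b_i^+ = M+1$), then propagates multiplicities up through the $SE$-levels to obtain the exact count $3d-1$, and finally matches this against $\sum_j(3d_j-1) = 3d - N$ to force $N=1$. The paper's argument is a bit shorter (one monotonicity application, one summed index inequality), while yours yields the finer statement $b_H^+ = M_H$ for each plane along the way, which is in fact used later in the paper.
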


\begin{proof}

Our choice of $J$ forces the  negative ends of the  curves  of $\mathbf{ F}$ with image in $(\CC P^2 \smallsetminus E)^{\infty}_-$
to cover $\gamma_1$ at least $3d-1$ times and hence, by Lemma \ref{f}, exactly $3d-1$ times.
To see this, let $G_E$ be the collection of curves of $\mathbf{F}$ with images in
either $SE$ or $E^{\infty}_+ $. Denote by $\overline{G}_E$ the map to $E \# \overline{\C P^2}$ formed by fitting together compactifications of the curves of $G_E$.  By the existence of the map $\overline{\mathbf{ F}} \colon S^2 \to \CC P^2 \# \overline{\C P^2} $, the ends of $\overline{G}_E$ cover $\gamma_1$ the same number of times, say $t$,
as the  curves  of $\mathbf{ F}$ with image in $(\CC P^2 \smallsetminus E)^{\infty}_-$.

Arguing as in the proof of Lemma \ref{rclose}, we see that the symplectic area (see Definition \ref{symarea}) of $\overline{G}_E$ is $t \pi / S^2 - (d-1)\pi r^2$
where the negative terms correspond to the $d-1$ intesections of $\overline{G}_E$ with  $\Sigma$. The monotonicity theorem
for holomorphic curves and our choice of $J$ implies that the intersections of $\overline{G}_E$ with the balls centered at the points $p_i$ each have symplectic area bounded below by $\pi r^2$. Hence, $t \pi / S^2 - (d-1)\pi r^2 \ge 2d \pi r^2$ and for $r$ sufficiently close to $1/S$ we can conclude that $t \ge 3d-1$.

In summary, the curves  of $\mathbf{F}$ with connected domain and image in $(\CC P^2 \smallsetminus E)^{\infty}_-$  have a  total Chern number of $3d$ and, collectively, their  negative ends cover
$\gamma_1$ exactly $3d-1$ times. As shown in the proof of Proposition \ref{index}, the deformation index
of a curve $F$ in $\mathbf{ F}$ whose image lies in  $(\CC P^2 \smallsetminus E)^{\infty}_-$ is given by
$$
\mathrm{index}(F) = -2+ 2c_1(F) -2\sum_{i=1}^{s^-_1} a^-_i .
$$ where $s^-_1$ represents the number of negative
ends and $a^-_i$ is the number of times the $i^{th}$
negative end covers $\gamma_1$. If there are $K$ such
curves, then their
 total deformation
index is  $$-2K +2(3d) - 2(3d-1) = -2K+2.$$
 Since the total index
must be nonnegative we have $K=1$. Hence, there is exactly one curve, say $F$,
of $\mathbf{ F}$ in $(\CC P^2 \smallsetminus E)^{\infty}_-$, and $F$ has index zero and exactly $3d-1$ negative ends,
when counted with multiplicity.

The remaining statements  of Lemma \ref{limit} follows easily from the first as $F$ is a limit of genus zero curves.
\end{proof}

Henceforth, we will always assume that the blow-up radius has been chosen sufficiently close to $1/S$ for Proposition \ref{index}
and Lemma \ref{limit} to hold.


\subsubsection{Limits of finite energy curves in $(\CC P^2 \smallsetminus E)^{\infty}_-$}

The compactness theorems of \cite{BEHWZ} also cover appropriate sequences of  finite energy curves  mapping to $(\CC P^2 \smallsetminus E)^{\infty}_-$, and in our specific setting we can again obtain some finer restrictions on the possible limits.


\begin{proposition} \label{anotherlimit}
Let $F_i$ be a sequence of finite energy holomorphic curves with connected domains and image in $(\CC P^2 \smallsetminus E)^{\infty}_- $,  such that each $F_i$ has genus zero, degree $k<d$, index $0$, and exactly $s$ negative ends (not counting multiplicities) each asymptotic to $\gamma_1$. There is a subsequence of the $F_i$ which converges in the sense  of \cite{BEHWZ} to a pseudo-holomorphic building consisting of curves mapping to either $(\CC P^2 \smallsetminus E)^{\infty}_-$ or $SE$. Moreover, the top level curve of any such limit, say $\mathbf{F}$, consists of  a single curve  in $(\CC P^2 \smallsetminus E)^{\infty}_- $ with a connected domain. This curve has  degree $k$ and $s' \le s$ negative ends. If $s'=s$ then we may assume that $\mathbf{F}$ has no additional curves with image in $SE$.
\end{proposition}

\begin{proof}

The existence of a convergent subsequence follows immediately from Theorem 10.2 of \cite{BEHWZ}, so for simplicity let us assume that the $F_i$ converge to $\mathbf{F}$ in the sense of \cite{BEHWZ}.  The nature of this convergence has, among others, the following four implications: (i)  the curves of $\mathbf{F}$ with image in $(\CC P^2 \smallsetminus E)^{\infty}_-$ have total degree $k<d$ (first Chern number $3k<3d$); (ii) the compactifications of the curves of $\mathbf{F}$  fit together to form a continuous map  $\overline{\mathbf{F}}$ from a connected compact surface with genus zero and $s$ boundary circles to $\overline{\CC P^2 \smallsetminus E}$; (iii) the sum of the virtual indices of all curves  of $\mathbf{F}$ is equal to $0$, the virtual index of the $F_i$; and (iv) $\mathbf{F}$ has a level structure and the lowest level curves of $\mathbf{F}$ (with connected domains) have, collectively,  the same asymptotic behavior as the $F_i$, i.e., a total of $s$ negative ends (not counting multiplicities) each asymptotic to $\gamma_1$.

Lemma \ref{new2} together with property (i) above, implies that  the curves of $\mathbf{F}$ with image  in $(\CC P^2 \smallsetminus E)^{\infty}_- $, like the $F_i$ themselves, have nonnegative deformation index and their (negative) ends are all asymptotic to multiples of $\gamma_1$ and cover  $\gamma_1$, in total, at most  $3k-1$ times.

This asymptotic behavior  of the the curves of $\mathbf{F}$ with image  in $(\CC P^2 \smallsetminus E)^{\infty}_- $ together with the existence of the map $\overline{\mathbf{F}}$ implies that  each positive end of the top level curves of $\mathbf{F}$ with image in $SE$ must also be asymptotic to multiples of $\gamma_1$. By Lemma \ref{new3} it then follows that the negative ends of the top level curves with image in $SE$ are also asymptotic to multiples of $\gamma_1$. By iteration on the level, we see that for each curve of $\mathbf{F}$ with image in $SE$ both the positive and negative ends are asymptotic to some multiple of $\gamma_1$. Lemma \ref{new3} also implies that
the virtual index of each of these curves is nonnegative and is strictly positive unless the curve has one positive end and is a multiple cover of a cylinder over $\gamma_1$.

It now follows from the discussion above  and property (iii)  that each curve in $\mathbf{F}$  has  index $0$, and that each  curve of $\mathbf{F}$  with image in $SE$ has a single positive end and is a multiple cover of a cylinder over $\gamma_1$. Together with the fact that  the map $\overline{\mathbf{F}}$  has a connected domain, this implies that there is a single curve, say $F$, of  $\mathbf{F}$ with connected domain and image in $(\CC P^2 \smallsetminus E)^{\infty}_- $. Again, by the discussion above we also see that $F$ has degree $k$.

Finally, to verify the statement on the asymptotics of $F$, we utilize property (iv). If the collection of  lowest level curves  of $\mathbf{F}$ is $F$ itself, then (iv) implies that $s'$, the number of negative ends of $F$, is equal to $s$ and we are done. Suppose then that the lowest level curves  of $\mathbf{F}$ instead map to $SE$. As described above, each of these curves has exactly one positive end and is a multiple cover of a cylinder over $\gamma_1$. Hence, the existence if $\overline{\mathbf{F}}$ implies that $s' \le  s$. If $s'=s$, then all curves  of $\mathbf{F}$ with image in $SE$ are unbranched multiple covers of the trivial cylinder over $\gamma_1$. In this case, one can omit these curves from the limit and our sequence still converges to $\mathbf{F} =F$ (see Remark \ref{uniquelimit}).


\end{proof}

\subsection{Holomorphic curves with varying point constraints} \label{varypoint}

In this section we prove two results involving families of (moduli spaces of)  curves
with varying  point constraints.

\subsubsection{Curves in $\CC P^2 \# \overline{\CC P^2}$} \label{firstmove}

Let $J^N$ be the almost-complex structure on $((\CC P^2 \# \overline{\CC P^2})^{N}, \omega_{R,r}^{N})$ defined by  $J \in {\mathcal J}_E^{\star}$
in the manner  described in Section \ref{split}.

Consider the space of smooth paths
$$
\Lambda(E \smallsetminus \Sigma) = C^{\infty}([0,1], (E \smallsetminus \Sigma)^{2d}).
$$
For  $\bar{p}(t) = \{p_i(t)\}_{i=1}^{2d} \in \Lambda(E \smallsetminus \Sigma)$, a $J$ in ${\mathcal J}_E^{\star}$
and an $N \in \mathbb{N}$ set
$${\mathcal N}(J^N, \bar{p})=\{(C,t)|C \in {\mathcal M}_d(J^N, p_1(t), \dots, p_{2d}(t))
\text{ and } t \in [0,1]\}.$$


\begin{proposition}\label{path1} Suppose that for a $J$ in ${\mathcal J}_E^{\star}$, the almost-complex structure $J^N$ is regular for every moduli space of holomorphic spheres $\mathcal{M}_A(J^N)$. Then there exists a subset of $\Lambda(E \smallsetminus \Sigma)$ of the second category such that the corresponding moduli space
${\mathcal N}(J^N, \bar{p})$ is a compact $1$-dimensional manifold and the
natural projection onto $[0,1]$ is a diffeomorphism.
\end{proposition}

\begin{proof} By Corollary \ref{regularcor11}, there is a set of the second category such that for all $t$ the $\{p_i(t)\}_{i=1}^{2d}$ are in general position with respect to $J^N$. By Lemma \ref{compactCP22} this implies that ${\mathcal N}(J^N, \bar{p})$ is a compact $1$-dimensional manifold. Finally, by Proposition \ref{compactCP23}, the space ${\mathcal N}(J^N, \bar{p})$ intersects the fibres of the projection transversally in single classes and so the projection is indeed a diffeomorphism.

\end{proof}


\subsubsection{Stretching the neck}

Given an almost-complex structure $J^N$ and path  $\bar{p}$ as in Proposition \ref{path1}, let $f_N(t)$ be a smooth family of $J^N$-holomorphic curves such that $[f_N(t)]$ is the unique class of holomorphic spheres for which $([f_N(t)],t) \in {\mathcal N}(J^N, \bar{p})$.

\begin{proposition}
\label{path2} There is a subset of $ {\mathcal J}_E^{\star}$ of the second category such that for each $J$ in this subset the induced $J^N$ are all regular in the sense of Lemma \ref{regular1} and the corresponding limiting almost-complex structures on
$(\CC P^2 \smallsetminus E)^{\infty}_- $, $SE$  and $E_+^{\infty}$ are regular for somewhere injective finite energy pseudo-holomorphic curves of genus
zero. For such $J$ there exist a subset of paths in $\Lambda(E \smallsetminus \Sigma)$ of the second category such that the conclusions of Proposition \ref{path1} hold for all $N$. Furthermore, if subsequences $f_{N_j}(0)$ and $f_{N_j}(1)$ both
converge, then the top level  curves of the two corresponding limiting holomorphic buildings (that is, the two collections of limiting curves  which map into
$(\CC P^2 \smallsetminus E)^{\infty}_- $)  have identical images.
\end{proposition}

\begin{proof}
The first statements on regular almost-complex structures and paths follow from Lemmas \ref{regular1} and \ref{regular2}.

In particular by Lemma \ref{regular2} we may assume that for paths $\bar{p}$ in our subset the corresponding points $\{p_i(t)\}$ are in general position for all $t$ for the limiting almost-complex structure on $E^{\infty}_+$.
We conclude that the restrictions  of Proposition \ref{index} hold for any holomorphic building which arises as the limit of a subsequence of curves of the form
$f_{N}(t_{N})$.

Let $\mathbf{ F}^0$ and $\mathbf{ F}^1$ be the limiting buildings of $f_{N_j}(0)$ and $f_{N_j}(1)$, respectively. Denote the
top level curves of $\mathbf{ F}^0$ and $\mathbf{ F}^1$ by $v^0=\{F^0_1, \ldots, F^0_{m_0}\}$ and $v^1=\{F^1_1, \ldots, F^1_{M_1^{\epsilon}}\}$.  Each of the holomorphic curves $F^0_i$ and $F^1_j$ here has a connected domain of genus zero, finite energy, index zero, and is regular. Arguing  by contradiction, we assume that the images of $v^0$ and $v^1$ are distinct.

For any sequence of points $t_{N_j} \in [0,1]$  we may assume that, after passing to  a further subsequence, the $t_{N_j}$ converge and the corresponding holomorphic spheres $f_{N_j}(t_{N_j})$ converge to a holomorphic building  such that, by Proposition \ref{index},  the curves of this building with  image in $(\CC P^2 \smallsetminus E)^{\infty}_- $  have finite energy, genus zero and index zero.   Choose a sequence $t_{N_{j_k}}$ in $[0,1]$ such that $t_{N_{j_k}} \to t_{\infty}$ and  the curves $f_{N_{j_k}}(t_{N_{j_k}})$ converge to a holomorphic building $\mathbf{G}$ whose top level curve $v_G$ is of the form $\{G_1, \ldots, G_{m_0}\}$, where the set of degrees  of the $G_i$ is equal to the set of degrees  of  the $F^0_i$. (For example, we can choose $t_{N_{j_k}} =0$). Assume further that the total number of negative ends of $v_G$ (not counting multiplicity) is maximal amongst all such limits. This last condition implies that $v_G$ is isolated in the following sense.

\begin{lemma}\label{isolate}
If $\mathcal{V}$ is the set of all top level curves of  limiting buildings of sequences  of the form $f_{N_j}(t_{N_j})$, then there is no nontrivial sequence (that is, a sequence in which infinitely many terms are distinct) $v_j$ in $\mathcal{V}$ which converges in the sense of \cite{BEHWZ} to a holomorphic building whose top level curve is $v_G$.
\end{lemma}

\begin{proof}
Suppose
that such a sequence $v_j$ exits. Without loss of generality we may assume that  $v_j =\{G_j^1, \ldots, G_j^m\}$  where the asymptotic behavior and the degree of  $G_j^i$ do not depend on $j$. Applying Proposition \ref{anotherlimit} to each
sequence  $G^i_j$ it immediately follows that $m = m_0$, and the set of degrees of the component curves of the $v_j$ must match the set of degrees of the component curves of $v_G$. Relabeling,  if necessary, we may assume that the top level of the
limit of $G^i_j$ is $G_i$. Now the curves $G_i$ are regular and have index zero, and thus are isolated among curves with the same degree and asymptotic behaviour. Therefore, if the $v_i$ is a nontrivial convergent sequence the asymptotic behavior of at least one of the $G^i_j$ must be different than that of $G_i$ and the limiting building of $G^i_j$ must have nontrivial components in $SE$. In this case, Proposition \ref{anotherlimit} now implies that the $G^i_j$ have more negative ends than $G_i$ and hence  the total number of negative limits of the $v_j$ must be strictly larger then the total  number of negative ends of  $v_G$. The existence of any such $v_j$ contradicts the maximality condition for $v_G$.
\end{proof}

By assumption, the images of  $v^0$ and  $v^1$ are not equal  and so the image $v_G$ must differ from one of them. As the argument will be identical in both cases, let us assume that the image of $v_G$ is not equal to the image of $v^1$.

Let $\mathcal{I}_G$ and $\mathcal{I}_1$ be the intersection of  $\overline{\CC P^2 \smallsetminus E}$  with the images of
$v_G$ and $v^1$, respectively. It then follows from the unique continuation principle that $\mathcal{I}_G$ and $\mathcal{I}_1$
 are  distinct compact subsets of $\overline{\CC P^2 \smallsetminus E}$.
Fixing  a metric on $\overline{\CC P^2 \smallsetminus E}$ we consider the corresponding Hausdorff metric on compact subsets of $\overline{\CC P^2 \smallsetminus E}$.
For large $k$, the sets   $f_{N_{j_k}}(t)(S^2) \cap \left(\overline{\CC P^2 \smallsetminus E}\right)$ are arbitrarily close to $\mathcal{I}_G$ for $t$ near $t_{N_{j_k}}$, and arbitrarily close to $\mathcal{I}_1$ for $t$ near $1$.
Since $\mathcal{I}_G \neq \mathcal{I}_1$, we can then choose,  for all large $k$, a time $t^{\epsilon}_{N_{j_k}} \in (t_{N_{j_k}}, 1)$
such that $f_{N_{j_k}}(t^{\epsilon}_{N_{j_k}})(S^2) \cap \left(\overline{\CC P^2 \smallsetminus E}\right)$ is a fixed distance, say $\epsilon>0$,  from $\mathcal{I}_G$.  Passing to a subsequence, if necessary,
we may then assume that the $t^{\epsilon}_{N_{j_k}}$ converge to some $t^{\epsilon}_{\infty} \in [0,1]$ and that  $f_{N_{j_k}}(t^{\epsilon}_{N_{j_k}})$ converges to a holomorphic building, $\mathbf{ F}_{\epsilon}$. By the choice of the $t^{\epsilon}_{N_{j_k}}$, the image of the top level curve $v_{\epsilon}$  of $\mathbf{ F}_{\epsilon}$  intersects  $\overline{\CC P^2 \smallsetminus E}$ in a set whose distance  from $\mathcal{I}_G$ is $\epsilon$ in the Hausdorff metric. Letting $\epsilon \to 0$ we can find a  nontrivial sequence of top level curves $v_{\epsilon_l}$ converging to a building whose top level curve has image $\mathcal{I}_G$ when intersected with $\overline{\CC P^2 \smallsetminus E}$, and is somewhere injective by Proposition \ref{index}. By the unique continuation principle, this top level curve must be $v_G$ itself. This contradicts the fact that $v_G$ is isolated in the sense of Lemma \ref{isolate}.

\end{proof}


Let $f_N(t)$ and $N_j$ be a family of curves and a sequence as in the statement of Proposition \ref{path2}. The above proof gives the following information on the images of the $f_{N_j}(t)$, which is a convenient way to apply Proposition \ref{path2}. Let $K \subset (\CC P^2 \smallsetminus E)^{\infty}_-$ be a compact subset with smooth boundary $\partial K$ such that the limiting component of the $f_{N_j}(0)$ in $(\CC P^2 \smallsetminus E)^{\infty}_-$, say $v$, intersects $\partial K$ transversally. Define $D = v^{-1}(K)$ and $D_j(t) = f_{N_j}(t)^{-1}(K)$ where in the latter expression we have identified $K$ with a compact subset of $(\CC P^2 \# \overline{\CC P^2})^{N_j}$ for large $j$.

\begin{corollary} \label{closeimage}
Given any  $\epsilon>0$  there exists a $j_0$ such that if $j \ge j_0$ then for all $t \in [0,1]$ there is a diffeomorphism $g_j(t):D \to D_j(t)$ such that $f_{N_j}(t) \circ g_j(t)$ is $\epsilon$-close to $v|_D$ in a $C^{\infty}$-norm (defined  by fixing metrics on $D$ and $K$).
\end{corollary}

Indeed, if the $f_{N_j}(t)$ converge uniformly to a building with top level component identical to $v$ then we can take the $g_j$ to be a small perturbation of the maps $\sigma_{N_j}^{-1}$ implicit in the definition of convergence in Section \ref{split}. We notice that if $f_{N_j} \circ \sigma_{N_j}^{-1}$ is sufficiently $C^{\infty}$-close to $v$ and $v$ intersects $\partial K$ transversally then by the implicit function theorem  $f_{N_j} \circ \sigma_{N_j}^{-1}$ can be slightly perturbed such that the preimage of $\partial K$ coincides with that of $v$. On the other hand, if the $f_{N_j}(t)$ do not converge uniformly to a building with top level component identical to $v$ (up to reparameterization, of course) then after taking a subsequence and reparameterizing our path we will derive a contradiction to Proposition \ref{path2}.

%



\subsection{An existence theorem}

The key to the proof of Theorem \ref{thm}
is the following existence result.

\begin{theorem}
\label{key}
For any integer $d \geq 1 $ and  a suitable choice of almost-complex structure $J \in {\mathcal J}_E^{\star}$, there exists a regular, finite energy holomorphic
plane of degree $d$ in $(\CC P^2 \smallsetminus E)^{\infty}_-$ whose
negative end covers the periodic orbit $\gamma_1$ precisely $3d-1$
times.
\end{theorem}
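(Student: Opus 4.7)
The plan is a continuity argument: start with a generic configuration of $3d - 1$ point constraints, neck-stretch the family of degree $d$ rational curves through them, and then deform the points along a generic path to a specially arranged target configuration for which the limiting building is forced to have a top-level component that is a plane with a single negative end covering $\gamma_1$ exactly $3d - 1$ times.

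First, I would fix a generic $J \in \mathcal{J}_E$ satisfying the regularity conditions of Proposition \ref{index} together with a generic initial configuration $p_1(0), \ldots, p_{3d-1}(0)$ in $E$. Proposition \ref{number} yields $n_d$ elements of $\mathcal{M}_d(J^N, p_1(0), \ldots, p_{3d-1}(0))$ for every $N$, and (along a subsequence) Lemma \ref{limit} gives a limiting holomorphic building with a unique top-level component in $(\CC P^2 \smallsetminus E)^{\infty}_-$ of degree $d$ carrying $3d - 1$ negative ends (with multiplicity), all asymptotic to covers of $\gamma_1$, together with bottom-level planes in $E^{\infty}_+$ absorbing all $3d - 1$ point constraints.

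Next, I would introduce a generic deformation $p_i(t)$, $t \in [0, 1]$, chosen so that at $t = 1$ the points are arranged to force maximal end-merging. A natural target is one in which all $3d - 1$ points lie on a single somewhere-injective, finite-energy plane $H$ in $E^{\infty}_+$ of index $0$ with positive end $\gamma_1^{3d-1}$, and such that $H$ is the unique plane of its kind through those points. After a small perturbation of $J$ near $\gamma_1$, Fredholm theory ensures that the moduli space of such planes is cut out transversely and has dimension matching the $3d - 1$ real degrees of freedom in the point constraints, so $H$ exists and is generically unique. Transporting one of the $n_d$ initial buildings along the deformation, Propositions \ref{path1} and \ref{path2} together with Remark \ref{closeimage} imply that in each connected component of the parameterized moduli space the top-level image is preserved essentially unchanged. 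By Remark \ref{points exist} and the uniqueness of $H$, the bottom level of the limiting building at $t = 1$ must equal $H$, so the top-level component at $t = 1$ is a plane with a single negative end matching the positive end of $H$, namely $\gamma_1^{3d-1}$, as required.

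Regularity of the resulting plane follows from its index being $0$ and $J$ being generic, via Dragnev's theorem \cite{dr}. The main obstacle is a twofold verification: one must confirm (a) the existence, uniqueness, and regularity of the target plane $H$ after a suitable perturbation of $J$, which requires a delicate application of the perturbation theory of finite-energy curves in the symplectization setting, and (b) that at least one connected component of the parameterized moduli space transports all the way to the target configuration rather than degenerating en route. The latter relies on the positivity of the enumerative invariant $n_d$ for $\CC P^2$, together with the connectedness provided by Proposition \ref{connected} and the absence of wall-crossings guaranteed by the genericity statements of Propositions \ref{path1} and \ref{path2}.
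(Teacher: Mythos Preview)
Your plan has a circular gap at its core. The entire argument rests on step (a): the existence of a somewhere-injective finite-energy plane $H$ in $E^{\infty}_+$ with a single positive end on $\gamma_1^{3d-1}$. You write that ``Fredholm theory ensures that the moduli space of such planes is cut out transversely and has dimension matching the $3d-1$ real degrees of freedom in the point constraints, so $H$ exists.'' But transversality only tells you the moduli space is a manifold of the expected dimension; it gives no information about nonemptiness. For the standard complex structure on $E^{\infty}_+$ the only somewhere-injective plane asymptotic to a cover of $\gamma_1$ is the disk $\{z_2=0\}$, asymptotic to $\gamma_1$ simply covered; planes with end $\gamma_1^{3d-1}$ arise only as $(3d-1)$-fold covers of this disk. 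Producing a somewhere-injective such plane after perturbation is not a ``delicate application of perturbation theory'' --- it is an existence problem of exactly the same character as the theorem itself. Indeed, if you had such an $H$, gluing it to the sought plane in $(\CC P^2\smallsetminus E)^{\infty}_-$ would recover a degree-$d$ sphere; conversely the existence of $H$ is precisely what one would try to extract from a splitting argument. So you are assuming the dual of what you want to prove.

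There is a second problem even granting $H$. Placing all $3d-1$ constraint points on a single index-$2(3d-1)$ curve in $E^{\infty}_+$ violates the genericity hypothesis in the proof of Proposition~\ref{path2} (which requires that no index-$2k$ curve pass through more than $k$ of the $p_i(t)$ only in the borderline sense, and you need strict inequality for all competing lower-index curves to rule them out). Nothing you have said excludes the bottom level at $t=1$ from being, say, several planes each through a proper subset of the points, in which case the top level retains multiple ends and you learn nothing.

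The paper's argument is entirely different. It does not try to construct a target configuration. Instead it lets $m$ be the maximal multiplicity of any single negative end appearing in any limit building, shows $m>1$ by colliding all points (Proposition~\ref{mnotone}), and then assumes $1<m<3d-1$ for a contradiction. The contradiction comes from tracking how the \emph{partition} of the constraint points induced by the building changes as one swaps two points and then forgets one: connectedness of $\mathcal{M}_d(J,p_1,p_3,\dots,p_{3d-1})$ forces the topology of certain preimages to change along a path of curves, which produces a non-rigid limit component and, after reinserting the forgotten point, a block of size $m+1$. A substantial technical device (domain-dependent almost-complex structures near the neck) is needed to make the relevant level-$r_0$ curve rigid, since in $SE$ index-zero curves are multiply covered cylinders and are never isolated.
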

\noindent Here, as before, $E$ is the (the image of the) ellipsoid $E(1/S,1)$ and we assume that $S>\sqrt{3d}$.

\subsubsection{The proof of Theorem \ref{key}}

\noindent{\bf An overview.}
As the proof is quite long  we will begin with an outline of its structure. The proof is divided into 7 steps. In the first of these we observe that for a suitable choice of $J$, which we fix,  Lemma \ref{limit} provides us with a viable candidate for the desired curve  in $(\C P^2 \smallsetminus E)^{\infty}_-$. This candidate, $F$, is the unique top level curve of a holomorphic building $\mathbf{F}^0$ which arises as the limit of $J$-holomorphic spheres satisfying special point constraints.

In Step 2, we observe that Proposition \ref{path2} allows us to view $F$  as part of the limit  $\mathbf{F}$ of  a sequence of curves which  are also $J$-holomorphic but which satisfy a different, essentially arbitrary, choice of point constraints. This freedom to choose point constraints  will be crucial  in what follows.  Also in Step 2, we  use Corollary \ref{closeimage} to explore some relationships between $\mathbf{F}$ and $\mathbf{F}^0$. These are expressed in terms of the behavior of $\mathbf{F}$ on the complement  of a subset of its domain of the form $D= F^{-1}(K)$ where $K$ is a fixed compact subset of  $(\C P^2 \smallsetminus E)^{\infty}_-$.

To verify that $F$ is the desired curve it suffices at this point to show that $F$ has exactly one negative end. In Step 3 we prove that, in fact, it suffices to prove that there is a marked point in each component  of the complement of $D$ in the domain of $\mathbf{F}$. This leaves us with the task of choosing  point constraints so that  the resulting limit $\mathbf{F}$ has this property.

In Step 4, we define the desired point constraints ( in {\it good position relative to $J$}) and prove that they exist.  These points are first chosen to lie in a neighborhood of the exceptional divisor $\Sigma$ where $J$ is standard. Identifying  this neighborhood with a small disk bundle $V_{\Sigma}$ in the tautological line bundle $L$ over $\Sigma = \C P^1$, we then place further restrictions on the points. These restrictions are expressed in terms of the almost-complex structures which occur in the process of splitting along $\partial E$ and/or $\partial V_{\Sigma}$  and the holomorphic curves in the resulting symplectic completions.



%
%

In Step $5$ we begin the final formal argument. Arguing by contradiction, we assume that for our special point constraints there are no marked points in some component of the complement of $D=F^{-1}(K)$ in the domain of  $\mathbf{F}$. We then discuss a few immediate implications of this assumption and define a function which measures the distance to $F$ of finite energy curves in $(\C P^2 \smallsetminus E)^{\infty}_-$, or in fact of any curves mapping to $(\C P^2 \#\overline{ \C P^2})^N$  for $N$ sufficiently large.

In Step 6, we prove that for all small $\epsilon>0$ there exist curves $f^\epsilon_N$ which map into $(\C P^2 \#\overline{ \C P^2})^N$ and are a distance $\epsilon$ away from $F$ in the sense of Step 5.
The key result is Proposition \ref{mty} whose proof is the most technical section of the total argument.


Finally, in Step $7$, we complete the proof by taking a limit of the curves $f^\epsilon_N$ as $N \to \infty$.  This limit, $\mathbf{F^{\epsilon}}$, will have total index zero.  However, the fixed distance requirement, for a generic choice of $\epsilon$,  implies that some curve of $\mathbf{F^{\epsilon}}$ which maps into $(\C P^2 \smallsetminus E)^{\infty}_-$ must have positive index. Our various regularity conditions imply that all the other curves  of  $\mathbf{F^{\epsilon}}$ have nonnegative indices. Thus, we will obtain the desired contradiction.

\bigskip

\noindent{\bf Step 1.} {\it Identifying a candidate.} We begin by fixing points $p^0_1, \dots , p^0_{2d}$ in $E \smallsetminus \Sigma$ and an almost-complex structure  $J \in {\mathcal J}_E^{\star}$ which meets all the conditions required for us to apply Lemma \ref{limit} and Proposition \ref{path2}.
In particular, we assume that the points $p^0_1, \dots , p^0_{2d}$ lie at the center of disjoint Darboux balls in $E \smallsetminus \Sigma$ of radius $r$ (arbitrarily close to $1/S$) and that $J$ agrees with the standard complex structure in
these balls. By Lemma \ref{regular2} we may assume that the $J^N$ are all regular and that $J$ induces almost-complex structures  on
$(\CC P^2 \smallsetminus E)^{\infty}_-$, $SE$, and $E^{\infty}_+$ which are regular for somewhere injective, finite energy curves of genus zero. We may also suppose that the $p^0_i$ are in general position for all of the almost-complex structures induced by $J$.

Let $f^0_{N}$ be a sequence of curves with $[f^0_{N}]={\mathcal M}_d(J^{N}, p^0_1, \dots, p^0_{2d})$, and let $\mathbf{F}^0$ be the limiting building of a convergent subsequence of the $f^0_N$. By  Lemma \ref{limit},
$\mathbf{F}^0$ has a single curve, say $F$, with image in $(\CC P^2 \smallsetminus E)^{\infty}_-$.
Moreover, the domain of $F$ is connected, its negative ends are all asymptotic to $\gamma_1$ and they cover $\gamma_1$ a total of
$3d-1$ times.  To prove that $F$ is the desired holomorphic plane it suffices to show the following.

\begin{proposition}\label{oneend}
The curve $F$ has exactly one negative end.
\end{proposition}

\begin{remark}\label{up-reg}
The point-wise restrictions we have imposed on $J$ all occur within $E \#\overline{ \CC P^2 }$. Hence, the conclusion of Theorem \ref{key} holds for
any almost-complex structure on  $(\CC P^2 \smallsetminus E)^{\infty}_-$ which is obtained by a splitting along $\partial E$, is regular for somewhere injective, finite energy curves of genus zero in $(\CC P^2 \smallsetminus E)^{\infty}_-$, and for which  $\CC P^1(\infty)$ is complex.
\end{remark}

\noindent{\bf Step 2.} {\it Reidentifying our candidate.} To prove Proposition \ref{oneend} we will need to view $F$  as the unique  top level curve in the limiting building of a different sequence of $J$-holomorphic curves which satisfy different, carefully chosen, point constraints. The following immediate consequence of Proposition \ref{path2} gives us the freedom to do so.

\begin{lemma}\label{move} Let $p_1, \dots , p_{2d}$ be any collection of points in $E \smallsetminus \Sigma$ which are in general position for all of the almost-complex structures induced by $J$. There is a convergent sequence of curves $f_{N_j}$ representing classes in  ${\mathcal M}_d(J^{N_j}, p_1, \dots, p_{2d})$ whose limiting building, $\mathbf{F}$, has a single curve with image in $(\CC P^2 \smallsetminus E)^{\infty}_-$ and this curve has the same image as $F$.
\end{lemma}
In what follows, we will simplify notation by writing $N$ instead of $N_j$ and calling the top level curve of the new building $\mathbf{F}$, $F$.

We now use Corollary \ref{closeimage} to establish a deeper relationship between the curves of $\mathbf{F}^0$ and those of the limiting building $\mathbf{ F}$.
As in  Corollary \ref{closeimage}, we first choose  a compact subset $K$ of $(\CC P^2 \smallsetminus E)^{\infty}_-$ such that $F$ (the unique top level curve of both $\mathbf{ F}^0$ and $\mathbf{ F}$) intersects $\partial K$ transversally. Here it will be useful to choose a $K$ of the form  $$K= (\C P^2 \smallsetminus E) \cup (\partial E \times [\tau,0])$$ for some $\tau<0$.
Choosing $\tau$ to be sufficiently negative we may assume that the complement of $D = F^{-1}(K)$ (in the domain of $F$) consists of a collection of once punctured disks, one for each negative end of $F$, where the boundaries of the disks are mapped to $\partial K$, and $F$ is asymptotic at each puncture to a cover of $\gamma_1$. Fixing a $\delta>0$ and with area defined as in Definition \ref{symarea} we may also assume that the total area of the punctured disks is at most $\delta$.
In other words,  the area of $F|_D$ is bounded below by $d\pi R^2 - (3d-1)\frac{\pi}{S^2} - \delta$.

Denoting the domain of $\mathbf{ F}^0$ by $\mathcal{S}^0$  and the domain of $\mathbf{ F}$  by $\mathcal{S}$, we now prove the following result.
\begin{lemma}\label{components}
There is an unambiguously defined bijection between the components of $\mathcal{S}^0 \smallsetminus D$  and the components of $\mathcal{S} \smallsetminus D$. Moreover, if $c_0$ and $c$ are corresponding components of $\mathcal{S}^0 \smallsetminus D$  and  $\mathcal{S} \smallsetminus D$, respectively, then $c_0$ and  $c$ have the same number of marked points and the collections of curves $\mathbf{ c}_0=\mathbf{ F}^0|_{c_0}$ and $\mathbf{ c}=\mathbf{ F}|_{c}$ have same total symplectic areas and intersection numbers with $\Sigma$.
\end{lemma}

\begin{proof}
Choose a path $\bar{p}(t) = \{p_i(t)\}_{i=1}^{2d} \in \Lambda(E \smallsetminus \Sigma)$ from the points $\{p^0_i\}_{i=1}^{2d}$ to the points $\{p_i\}_{i=1}^{2d}$
such that the spaces $${\mathcal N}(J^N, \bar{p})= \{([f_N(t)],t) \mid [f_N(t)] = {\mathcal M}_d(J^N, p_1(t), \dots, p_{2d}(t))\text{ and } t \in [0,1]\}$$
are as in Proposition \ref{path1}, where $f_N(0)= f_N^0$ and $f_N(1)=f_N$. Now choose a subsequence $N_j \to \infty$, such that sequences $f^0_{N_j}$ and $f_{N_j}$ both converge.

Corollary \ref{closeimage} implies that, for $j$ sufficiently large, up to reparameterization, the curves $f_{N_j}(t)$, for all $t \in [0,1]$, are all $C^{\infty}$-close to one another when restricted to the preimage of $K$. Thus we can identify the components of $S^2 \smallsetminus (f_{N_j}(0))^{-1}(K)$ with the components of $S^2 \smallsetminus (f_{N_j}(t))^{-1}(K)$, continuously in $t$.
As marked points  must map to constraint points in $E \smallsetminus \Sigma$, they cannot enter the preimage of $K$ and so corresponding components of $S^2 \smallsetminus(f^0_{N_j})^{-1}(K)$ and $S^2 \smallsetminus(f_{N_j}(t))^{-1}(K)$  have the same number of marked points for all $t \in [0,1]$. Corresponding components also have approximately the same boundary image in $K$. Thus, the symplectic areas of the images of corresponding components are arbitrarily close for large $j$, and the intersections numbers of corresponding components with $\Sigma$ are constant in $t$ for large $j$. Passing to the limit $j\to \infty$ the result follows.

\end{proof}

It will be useful to group the components of  $\mathcal{S} \smallsetminus D$ into two subsets, $\mathcal{C}$,  the collection of components which contain no marked points, and $\tilde{\mathcal{C}}$, the remaining components.  Set $\mathbf{ C}= \mathbf{ F}|_{\mathcal{C}}$.

\begin{lemma}\label{delta}
For any $\delta>0$ and $r$ sufficiently close to $1/S$  the total area of the curves in $\mathbf{C}$ is less than $2\delta$.
\end{lemma}

\begin{proof}
Let $\mathcal{C}^0$ be the collection of components of $\mathcal{S}^0 \smallsetminus D$ which contain no marked points and set $\mathbf{ C}^0 = \mathbf{ F}^0|_{\mathcal{C}^0}$.
By Lemma \ref{components} it suffices to show that the statement of the lemma holds for $\mathbf{C}^0$ in place of $\mathbf{C}$.

It follows from our choice of $K$, that the total area of $\mathbf{ F}^0|_{\mathcal{S}^0 \smallsetminus  D}$ is at most $$(d-1)\pi \left(\frac{1}{S^2}-r^2\right)+ \frac{2d\pi}{S^2} + \delta.$$
Since the point constraints $p^0_1, \dots , p^0_{2d}$ lie at the center of disjoint Darboux balls in $E \smallsetminus \Sigma$ of radius $r$ and  $J$ agrees with the standard complex structure in
these balls, we can invoke the monotonicity theorem as in Lemma \ref{limit}.
Let $\tilde{\mathcal{C}}^0$ be the  components of  $\mathcal{S} ^0\smallsetminus D$ not in $\mathcal{C}^0$ (which therefore contain all the marked points) and set $\tilde{\mathbf{ C}}^0 = \mathbf{ F}^0|_{\tilde{\mathcal{C}}^0}$.
Monotonicity then implies that the curves in $\tilde{\mathbf{ C}}^0$   have area bounded below by $2d \pi r^2$. From this and the upper bound for the area of $\mathbf{ F}^0|_{\mathcal{S}^0 \smallsetminus  D}$ above, it follows that  for $r$ sufficiently close to $1/S$ the total area of the curves in $\mathbf{C}^0$ is less than $2\delta$, as required.

\end{proof}

At this point we fix our set $K$ with $\delta <\pi r^2/3$. This is possible as long as $r^2 > \left(\frac{9d-3}{9d-2}\right)\frac{1}{S^2}.$

\bigskip

 \noindent{\bf Step 3.} {\it A sufficient condition.} A careful choice of the point constraints $\{p_i\}_{i=1}^{2d}$ will give us enough control over the limiting building $\mathbf{F}$ to establish a condition which is sufficient to imply  Proposition \ref{oneend}. In this step of the proof we describe this condition and establish its sufficiency.

\begin{proposition}\label{alt} If each component of the complement of $D=F^{-1}(K)$ in $\mathcal{S}$ contains at least one marked point, then $F$ has exactly one negative end.
\end{proposition}

\begin{proof}
Arguing by contradiction we assume that each component of the complement of $D$ contains at least one marked point and $F$ has more than one negative end.
Given this  we can find two marked points, say $y_1$ and $y_2$,  which lie in different components of $\mathcal{S} \smallsetminus D$. Choose a path  of constraints $\bar{p}(t) = \{p_i(t)\}_{i=1}^{2d} \in \Lambda( E \smallsetminus \Sigma)$ which switches $p_1$ and $p_2$ and leaves the other points fixed. More precisely, suppose that  $p_i(0)=p_i$ for all $i$, $p_1(1) =p_2$, $p_2(1) =p_1$, and
$p_i(1)=p_i$ for all $i>2$. By Lemmas \ref{regular1} and \ref{regular2}, for our fixed regular $J$ we may assume that the $p_i(t)$ are in general position for all $t$. By Proposition \ref{path1} for each $N$ there exist corresponding families of curves $f_N(t)$ with $([f_N(t)],t) \in {\mathcal N}(J^N, \bar{p})$. Moreover, our choice of the path $\bar{p}$ implies that the curves $f_N(0)$ and $f_N(1)$ intersect the same constraint points. By Proposition \ref{compactCP23}, it follows that
their images coincide.

Now choose a subsequence $N_j$ as in Proposition \ref{path2}. As described in the proof of Lemma \ref{components}, Corollary \ref{closeimage} implies that, for $j$ sufficiently large,
we can identify the components of $S^2 \smallsetminus (f_{N_j}(0))^{-1}(K)$ with the
 components of $S^2 \smallsetminus (f_{N_j}(t))^{-1}(K)$, continuously in $t$. Consider the component of $S^2 \smallsetminus (f_{N_j}(0))^{-1}(K)$ containing the marked point $y_1$. The image of this component  under $f_{N_j}(0)$ intersects $p_1$ and doesn't intersect $p_2$ (as our curves are embedded), whereas the image of the corresponding  component of $S^2 \smallsetminus (f_{N_j}(1))^{-1}(K)$ must intersect $p_2$ and not $p_1$. This implies that the curves $f_{N_j}(0)$ and $f_{N_j}(1)$ must have different images which is the desired contradiction.
\end{proof}

\bigskip

\noindent{\bf Step 4.} {\it Good point constraints.}  We now define and establish the existence of special point constraints $\{p_j\}_{j=1}^{2d}$ which will allow to prove Proposition \ref{oneend} using Proposition \ref{alt}.
The conditions imposed on these points all refer to a new splitting defined near the  exceptional divisor $\Sigma$ which we now describe.
\medskip

\noindent{\bf A splitting near $\Sigma$.}
Recall that the assumption that  $J$ belongs to $\mathcal{J}^{\star}_E$ implies  that $J$ is equal to the standard integrable complex structure in a small open neighborhood $U_{\Sigma}$ of $\Sigma$.
For a sufficiently small $\epsilon_{\Sigma}>0$ we can choose a closed subset $V_{\Sigma}$ of $U_{\Sigma}$ which can be identified with a small disk bundle in the the holomorphic line bundle $L$ of degree $-1$ over $\Sigma = \CC P^1$,
in such a way that $V_{\Sigma} \smallsetminus \Sigma$ equipped with $J$  is biholomorphic to $B^4(\epsilon_{\Sigma}) \smallsetminus \{0\} \subset \CC^2$.

For $N \in \mathbb{N}$ and $M \in [0, \infty)$, let $J^N_M$ denote the result of  stretching $J$ to length $N$ along $\partial E$ and to length $M$ along $\partial V_{\Sigma}$. Note that each $J^N_M$ is biholomorphic to $J^N$ by a biholomorphism which equals the identity away from $V_{\Sigma}$ and simply contracts the stretched ball onto the original one. We will also denote by $J_M$ the almost complex structure on $E^{\infty}_+$ given by stretching $J$ to length $M$ along $\partial V_{\Sigma}$. Again, $J_M$ is biholomorphic to $J$ and so if $J$ is regular on $E^{\infty}_+$ then so are all $J_M$.

In the limit  $M \to \infty$ (with $N$ fixed) we will obtain two new almost-complex manifolds with cylindrical ends which are not symplectizations. One of these will be  the (positive) completion of $V_{\Sigma}$. This is just the full line bundle $L$. The Reeb orbits on the boundary correspond exactly to the fibers of our line bundle, or Hopf fibers on $S^3 = \partial B^4$. Finite energy curves in $L$ are algebraic curves with poles corresponding to asymptotic limits on Reeb orbits.
If the projection of a curve to the zero section $\Sigma$ has degree $a$ then $a \ge 0$, and if the curve has $k$ poles and $l$ zeros counted with multiplicity, and does not cover $\Sigma$, then $k-l =a$. Curves of degree zero are just multiple covers of fibers. If we choose $\epsilon_{\Sigma}$ small with respect to $r$ then the symplectic area (see Definition \ref{symarea}) of a curve in $L$ of degree $a$ is approximately $a\pi r^2$. In particular curves with symplectic area less than $\pi r^2$ must cover fibres. (This fact informs our choice of $\delta< \pi r^2/3$ at the end of Step 2.) We also remark that the integrable complex structure on $L$ is regular in that somewhere injective curves appear in manifolds of dimension equal to their index, as do curves with poles constrained to lie on certain orbits. As well, multiply covered curves cover curves of strictly smaller index.

The other manifold obtained in the limit $M \to \infty$ is the (negative) completion of the complement of $V_{\Sigma}$. Holomorphically this is just $\CC P^2 \# \overline{\CC P^2} \smallsetminus \Sigma$ (in particular the negative cylindrical end is biholomorphic to our punctured ball) and under this identification finite energy curves are closed holomorphic curves in $\CC P^2 \# \overline{\CC P^2}$ with the preimage of $\Sigma$ removed.

If we also let $N \to \infty$ then we will also need to study $E \smallsetminus V_{\Sigma}$ with a positive completion at $\partial E$ and a negative completion at $\partial V_{\Sigma}$. We can identify this with $E^{\infty}_+ \smallsetminus \Sigma$ and so finite energy holomorphic curves here correspond to finite energy curves in $E^{\infty}_+$ with the preimage of $\Sigma$ removed. The index of these curves is given by the expression in Proposition \ref{indexE} where the term $H \cdot \Sigma$ now represents the number of negative ends counted with multiplicity.

\medskip

\noindent{\bf Point constraints in good position.}
To each moduli space ${\cal P}$ of finite energy curves in $E^{\infty}_+ \smallsetminus \Sigma$ of dimension zero we associate a family of moduli spaces of curves in $L$ each of which has a nonnegative even dimension. Let ${\cal B}^{\cal P}_n$ denote the class of curves in $L$ with one positive end (that is, one pole) and virtual deformation index $2(n+1)$ such that the positive end is asymptotic to a negative asymptotic orbit of a curve in ${\cal P}$. Set ${\cal B}_n = \cup_{\cal P} {\cal B}^{\cal P}_n$. As we assume that $J$ restricts to a regular almost-complex structure on $E^{\infty}_+ $ and hence $E^{\infty}_+ \smallsetminus \Sigma$, the set of possible limiting orbits coming from somewhere injective curves in some ${\cal P}$ is of dimension zero. The same is true of the ends coming from multiply covered curves by equation (\ref{coversofH}). Thus, by regularity of the standard complex structure on $L$ and because the limiting Reeb orbits appear in $2$ dimensional families, each moduli space ${\cal B}_n$ has virtual index $2n$.

Following Definition \ref{general2} we will say that
a collection of points $\{p_i\}_{i=1}^{2d}$ in $L$ is in {\it general position} if no somewhere injective finite energy holomorphic curve of genus zero and  virtual index $2k$ has image intersecting more than $k$ of the points.

\begin{definition}\label{good}
A set of points $\{p_i\}_{i=1}^{2d}$ in $V_{\Sigma}$ is in {\it good position relative to $J$} provided the following conditions hold.
\renewcommand{\theenumi}{(\roman{enumi})}
\begin{enumerate}
\item The set $\{p_i\}_{i=1}^{2d}$ is in general position relative to  $J^N_M$ on $(\CC P^2 \# \overline{\CC P^2})^N$ and to $J_M$ on $E^{\infty}_+$ for all  $N \in \Bbb N$ and $M \in [0,\infty)$.
\item The set $\{p_i\}_{i=1}^{2d}$ is in general position relative to the standard complex structure on  $L$.
\item No curve representing ${\cal B}_n$ has image intersecting $n+1$ of the $\{p_i\}_{i=1}^{2d}$.
\end{enumerate}
\end{definition}

\begin{lemma} \label{goodexist} A generic set of points $\{p_i\}_{i=1}^{2d}$ in $V_{\Sigma}$ is in good position relative to $J$.
\end{lemma}

\begin{proof}
We study the points which are not in good position. By Lemma \ref{regular1} the points which are not in good position for a given $J^N_M$ is of codimension $2$. Therefore letting $M$ vary, sets of points failing to satisfy condition $(i)$ for the $J^N_M$ are of codimension $1$. Similarly, by Lemma \ref{regular2}, recalling that each $J_M$ is regular (as they are biholomorphic to $J$), points which are not in good position for each $J_M$ are also of codimension $2$ and so points failing to satisfy condition $(i)$ for the $J_M$ are again of codimension $1$. As the complex structure on $L$ is regular, index decreases under multiple covers, and all moduli spaces have even dimension, the points failing to satisfy condition $(ii)$ have codimension $2$ as in Lemma \ref{regular2}. Finally, as ${\cal B}_{n}$ has dimension $2n$ the same argument as Lemma \ref{regular2} shows that points failing to satisfy $(iii)$ also have codimension $2$.
\end{proof}

 \begin{remark}\label{buildgs} We are really interested in curves in $L$ whose poles correspond to negative ends  of index zero holomorphic {\it buildings} in $E^{\infty}_+ \smallsetminus \Sigma$. We recall that a holomorphic building consists of finite energy curves in $E^{\infty}_+ \smallsetminus \Sigma$ and the symplectizations $SE$ and $SV_{\Sigma}$ of $\partial E$ and $\partial V_{\Sigma}$ respectively, with corresponding ends identified. As the Reeb orbits on $\partial V_{\Sigma} = S^3$ come in $2$-parameter families, the virtual index of such a holomorphic building is defined to be the sum of the indices of the various curves minus $2$ times the number of limits required to match on $\partial V_{\Sigma}$. With this definition, a convergent sequence of finite energy curves in $E^{\infty}_+ \smallsetminus \Sigma$ converges to a holomorphic building of the same index.

Now, by Proposition \ref{indexSE} curves in $SE$ have strictly positive index unless they are multiple covers of a trivial cylinder. Similarly, curves in $SV_{\Sigma}$ have index strictly greater than $2$ times the number of positive ends unless the curve is a cover of a trivial cylinder. (This can be seen intuitively without an explicit calculation. Indeed, for algebraic curves we always have freedom to move the locations of the poles in their $2$-dimensional families of Reeb orbits, and if the curve does not cover a trivial cylinder then scaling by non-zero complex numbers also acts nontrivially.) As the almost-complex structure on $E^{\infty}_+$ and hence $E^{\infty}_+ \smallsetminus \Sigma$ is assumed to be regular, by Lemma \ref{h} all finite energy curves have nonnegative index. Together, these inequalities imply that the negative limits of finite energy curves of index $0$ in $E^{\infty}_+ \smallsetminus \Sigma$ are exactly the negative limits of finite energy buildings of index $0$ in $E^{\infty}_+ \smallsetminus \Sigma$.
\end{remark}

\bigskip

\noindent{\bf Step 5.} {\it Proving Proposition \ref{oneend} by contradiction.} In this stage of the argument we begin the formal proof of a result which will imply  Proposition \ref{oneend} and hence Theorem \ref{key}. We also outline the argument to come and introduce a useful measure of distance.

Given Lemma \ref{goodexist}, we can choose points $\{p_i\}_{i=1}^{2d} \subset V_{\Sigma}$ in good position relative to $J$. With these points set, we view $F$ as the unique top level curve of the limit $\mathbf{F}$ of a convergent sequence of curves $f_N$ representing  ${\mathcal M}_d(J^N, p_1, \dots, p_{2d})$ as in Lemma \ref{move}. Let $K$ be the subset of $(\C P^2 \smallsetminus E)^{\infty}_-$ from Step 2. By Proposition \ref{alt} we will be done if we can prove the following.

\begin{proposition} \label{mkd} Each component of the complement of $D=F^{-1}(K)$ in $\mathcal{S}$ contains at least one marked point.
\end{proposition}

Arguing by contradiction, we assume that  $\mathcal{C}$, the collection of components of $\mathcal{S} \smallsetminus D$ which contain no marked points, is nonempty. We will derive a contradiction from this as follows. In the next step we will use the splitting along $\partial V_{\Sigma}$ to find, for any sufficiently small $\epsilon>0$, a sequence of curves $f^{\epsilon}_N$ a fixed {\it distance} $\epsilon$ from the candidate $F$.
In the last  step of the proof we will consider a general limit point  $\mathbf{ F}^{\epsilon}$ of the curves $f^{\epsilon}_N$ and prove that for a generic choice of $\epsilon>0$ no such limits can  exist. This  contradicts
the compactness theorem of \cite{BEHWZ}. The assumption that $\mathcal{C}$ is nonempty will be invoked twice in this process, once in each of the two steps to come.

Before proceeding, we observe some immediate implications of our assumption that $\mathcal{C}$ is nonempty. Let $\tilde{\mathcal{C}}$ be the remaining components of  $\mathcal{S} \smallsetminus D$ and set $\mathbf{ C} = \mathbf{ F}|_{\mathcal{C}}$  and $\tilde{\mathbf{ C}} = \mathbf{ F}|_{\tilde{\mathcal{C}}}$. As described in Remark \ref{new1}, every component of $\mathcal{C}$  must  include the domain of a curve of $\mathbf{ F}$ which intersects $\Sigma$. We denote by  $\mathbf{C}\cdot \Sigma $ the total number of these intersections, counted with multiplicity.  Since $J$ belongs to $\mathcal{J}^{\star}_E$, the exceptional divisor $\Sigma$ is itself $J$-holomorphic.
We also recall that, by Lemma \ref{delta} and our choice of $\delta<\pi r^2/3$, the total symplectic area of the curves in $\mathbf{ C}$ is less than $2\pi r^2/3$. In particular no curve of $\mathbf{ C}$ can cover $\Sigma$. So, by positivity of intersection, our present assumption implies that $\mathbf{C}\cdot \Sigma >0.$

\medskip

\noindent{\bf A measure of the distance from $F|_D$.} It will be useful to consider curves which are close to $F$ when restricted to the preimage of $K$. We make this precise using a map $d_K$ defined as follows.
Let $G$ be a smooth map from a Riemann surface to $(\C P^2 \smallsetminus E)^{\infty}_-$ or to $(\CC P^2 \# \overline{\CC P^2})^N$ where $N$ is large enough for this set to include $K$. Let $D_G = G^{-1}(K)$.
If $G$ does not intersect $\partial K$ transversally or $D_G$ is not diffeomorphic to $D$ then we set $d_K(G) = \infty$. Otherwise, we set  $$d_K(G) = \inf \|G\circ \sigma - F|_D\|_{C^{\infty}},$$
where the infimum is over all diffeomorphisms $\sigma : D \to D_G$ and the norm is defined by fixing metrics on $D$ and $K$.

For example, consider the case of the maps $f_N$. Since they converge to $\mathbf{F}$ in the sense of \cite{BEHWZ}, for $N$ large there are (approximately holomorphic) diffeomorphisms $\sigma_N : D \to f_N^{-1}(K)$ such that $f_N \circ \sigma_N$ is $C^{\infty}$-close to $F$.  In particular $d_K(f_N) \to 0$ as $N \to \infty$.

\bigskip

\noindent{\bf Step 6.} {\it Detecting the curves $f^{\epsilon}_N$.} At this point we establish the existence of the curves $f^{\epsilon}_N$ whose limits will lead us to our contradiction. For $\epsilon>0$, we define
$$
{\mathcal U}^N_M(\epsilon)=\{f\mid  [f] ={\mathcal M}_d(J^N_M, p_1, \dots, p_{2n}), \text{ and  } d_K(f) < \epsilon\}.
$$
Note that for $N$ sufficiently large $f_N \in {\mathcal U}^N_0(\epsilon)$.

\begin{proposition}\label{mty} Given $\epsilon>0$ sufficiently small, for all $N$ sufficiently large, there exists an $M_0=M_0(\epsilon,N)$ such that ${\mathcal U}^N_{M_0}(\epsilon)$ is empty.
\end{proposition}

\begin{proof}
Since  our points are in general position for each $J^N_M$, by Proposition \ref{path1}  we can find a smooth family of curves $f^N_M$ for $M \in [0,\infty)$ such that  $f^N_0 = f_N$ and  $[f^N_{M}] = {\mathcal M}_d(J^N_M, p_1, \dots, p_{2n})$. Here we recall that the $J^N_M$ are all biholomorphic to the fixed almost-complex structure $J^N$, and so varying $M$ is equivalent to moving the point constraints in $V_{\Sigma} \subset (\CC P^2 \# \overline{\CC P^2})^N$.

Arguing by contradiction we assume that the lemma is false. In this case, for an $N$ which we may assume to be arbitrarily large, $d_K(f^N_{M}) < \epsilon$ for all $M$, and so setting $D^N_{M} = (f^N_{M})^{-1}(K) \subset S^2$ there exist diffeomorphisms $\sigma^N_{M} \colon D \to D^N_{M}$ such that
\begin{equation}
\label{mclose}
\|f^N_{M} \circ \sigma^N_{M}-F|_D\|_{C^{\infty}} < \epsilon \,\,\,\text{  for all  } M.
\end{equation}

Now choose a sequence $M_i \to \infty$ and curves $f^N_{M_i} \in {\mathcal U}^N_{M_i}(\epsilon)$ which converge in the sense of \cite {BEHWZ} to a building $\mathbf{ F}^N$ with domain $\mathcal{S}^N$.
As the $f^N_{M_i}$ converge to $\mathbf{ F}^N$ uniformly on compact sets,
there is a curve $F^N$ of $\mathbf{ F}^N$ such that  $d_K(F^N|_{D^N})\leq \epsilon$ for $D^N = (F^N)^{-1}(K)$. Arguing as in Lemma \ref{components} one gets the following.
\begin{lemma}\label{componentsN}
For sufficiently large $N$, there is an unambiguously defined bijection between the components of $\mathcal{S} \smallsetminus D$  and the components of $\mathcal{S}^N \smallsetminus D^N$. Corresponding components have the same number of marked points and their images under $\mathbf{ F}$ and $\mathbf{ F}^N$ have same intersection numbers with $\Sigma$. The difference between the symplectic areas of corresponding components is of order $\epsilon$  as $N \to \infty$.
\end{lemma}

\begin{proof} By the nature of the convergence of the $f_N$ to $\mathbf{ F}$, for all sufficiently large $N$ there is natural bijection between the components of $\mathcal{S} \smallsetminus D$  and the components of $S^2 \smallsetminus (f_N)^{-1}(K)$ such that corresponding components have the same number of marked points, their images under $\mathbf{ F}$ and $f_N$ have same intersection numbers with $\Sigma$, and  the difference between the symplectic areas of corresponding components goes to zero as $N \to \infty$. Hence it suffices to prove the lemma with $\mathcal{S} \smallsetminus D$ replaced by $S^2 \smallsetminus (f_N)^{-1}(K)$ and $\mathbf{ F}$ replaced by $f_N$.

 Fixing a large $i$ one replaces the role of the family $\{f_N(t)\}_{t \in [0,1]}$ in the proof of Lemma \ref{components} with the family $\{f^N_M\}_{M \in [0,M_i]}$. Starting with inequality \eqref{mclose}, one can then establish the desired correspondences between the components of $S^2 \smallsetminus (f_N)^{-1}(K)$ and those of $S^2 \smallsetminus D^N_{M_i}$ as well as the equality of the intersection numbers with $\Sigma$ for corresponding components. The statement about the symplectic areas of corresponding components also follows easily from \eqref{mclose}. Passing to the limit $i \to \infty$ this time,  the proof is complete.

\end{proof}

The building $\mathbf{ F}^N$ has components mapping to $(\CC P^2 \# \overline{\CC P^2})^N \smallsetminus \Sigma$ and to $L$.
Denote by $\mathcal{C}^N$ and $\tilde{\mathcal{C}}^N$ the collections of the components of $\mathcal{S}^N \smallsetminus D^N$ which correspond, via the bijection of Lemma \ref{componentsN} to $\mathcal{C}$ and $\tilde{\mathcal{C}}$, respectively. Set
$\mathbf{ C}^N= \mathbf{ F}^N|_{\mathcal{C}^N}$  and $\tilde{\mathbf{ C}}^N = \mathbf{ F}^N|_{\tilde{\mathcal{C}}^N}$.

For $\epsilon>0$ sufficiently small, it follows from Lemma \ref{componentsN}, Lemma \ref{delta}, and our choice of $\delta$ in Step 2 that for all sufficiently large $N$ the total symplectic area of the curves in $\mathbf{ C}^N$ must also be less than $\pi r^2$. Henceforth we will assume that $\epsilon$ has  been chosen small enough for this to hold. With this, it follows that for large enough $N$ the curves of  $\mathbf{ C}^N$ with image in $L$ are all multiple covers of fibers (see Step 4).

Lemma \ref{componentsN} also implies that  $\mathbf{C}^N \cdot \Sigma = \mathbf{C}\cdot \Sigma$ where $\mathbf{C}^N \cdot \Sigma$ is the number of intersections between $\Sigma$ and the curves of $\mathbf{ C}^N$ counted with multiplicity.  Put another way, for large $N$  the curves of $\mathbf{C}^N$ which have image in $L$ are all covers of fibres and there are $\mathbf{C}\cdot \Sigma$ such fibers (when counted with multiplicity).

As $F$ is embedded and $F^N|_{D^N}$ is close to $F$, the boundaries of $\mathbf{ C}^N$ and $\tilde{\mathbf{ C}}^N$ are disjoint in $\partial K$.
We define the intersection number $\mathbf{ C}^N\cdot \tilde{\mathbf{ C}}^N$ by compactifiying the constituent curves to get maps into the complement $K^c$ of $K$ in $(\C P^2 \# \overline{\C P^2})^N$. Then, as the maps have disjoint boundaries the intersection number can be defined as usual and must be $0$ as we are dealing with limits of embedded curves.

Before completing the proof of Proposition \ref{mty}, we establish some additional restrictions on the curves of $\mathbf{ C}^N$  and $\tilde{\mathbf{ C}}^N$ with image in $L$.

\begin{lemma} \label{inL} For all sufficiently large $N$ the following statements hold. The curves of $\mathbf{C}^N$ with image in $L$ all cover the same fibre, $V$. All curves of  $\tilde{\mathbf{C}}^N$ with image in $L$ either have a single positive end asymptotic to the fiber $V$ or are covers of $\Sigma$. At least one of these curves has positive degree.
\end{lemma}

\begin{proof}
First we show that all curves of  $\tilde{\mathbf{C}}^N$ with image in $L$ and not covering $\Sigma$ have a single positive end. More generally, we show in fact that any component of $\tilde{\mathbf{C}}^N$ in $L$ consists of curves only one of which can have a single positive end. This follows from genus considerations as in Lemma \ref{limit}. Indeed, as $\mathbf{ F}^N$ is a limit of curves of genus zero the compactifications of its curves must fit together to form a map from $S^2$ to $(\C P^2 \# \overline{\C P^2})^N$. But suppose that a component with image in $L$ has more than one positive end. As $D^N$ is connected this implies that $\mathbf{ F}^N$ has a curve in $(\C P^2 \# \overline{\C P^2})^N \smallsetminus \Sigma$ with image contained in $K^c$. By the removable singularity theorem any such curves can be compactified to give a closed curve with image contained in $K^c$. This compactified closed curve  is therefore homologous to a multiple of $[\Sigma]$. However, it cannot cover $\Sigma$ itself (as the original  curve has image  in $(\C P^2 \# \overline{\C P^2})^N \smallsetminus \Sigma$).  This is a contradiction to positivity of intersection.

Fixing a suitably large $N$ we may assume that \ the curves of  $\mathbf{ C}^N$ with image in $L$ are all multiple covers of fibers. Let $V$ be one of these fibres. Consider a curve $H$ of $\tilde{\mathbf{ C}}^N$ with image in $L$. As described in Step 4, $H$ either has degree zero, in which case it covers a fibre, or it has positive degree.

Suppose $H$ has positive degree and is not a cover of $\Sigma$. We claim that $H$ and $V$ cannot intersect. Indeed, if there were an intersection point then as $H$ and $V$ have distinct images (as $H$ has positive degree) the intersection point must be isolated. Also as $H$ has positive degree it is the only curve in its component of $\tilde{\mathbf{C}}^N$ in $L$ with a positive end, and so the other curves are all covers of $\Sigma$. These also intersect $V$ at an isolated point. Therefore by positivity of intersection, any such intersections would imply  self-intersections of the $f^N_{M_i}$ for large $i$, and as the $f^N_{M_i}$ are embedded this is a contradiction. From this we see that
the single positive end of $H$ must be asymptotic to the fibre $V$. We also see that the corresponding component contains no copies of $\Sigma$.

Next suppose that $H$ is a cover of $\Sigma$. Its component of $\tilde{\mathbf{C}}^N$ in $L$ has a single curve with a positive end and from the last paragraph we see that this curve has degree $0$. By positivity of intersection again it must be a cover of the fiber $V$.

Similarly, any $H$ of positive degree not covering $\Sigma$ cannot intersect any fibre in $L$ covered by a degree $0$ curve in $\tilde{\mathbf{ C}}^N$, and curves which cover $\Sigma$ can only intersect degree $0$ curves of $\tilde{\mathbf{ C}}^N$ which cover $V$.

In summary, if the set of curves of  $\tilde{\mathbf{ C}}^N$ with image in $L$ and positive degree is nonempty, then every  curve of $\mathbf{C}^N$ with image in $L$ must cover the same fibre, $V$, every curve of  $\tilde{\mathbf{C}}^N$ with image in $L$ and degree zero must also cover $V$, and every curve of  $\tilde{\mathbf{C}}^N$ with image in $L$ and positive degree is either a cover of $\Sigma$ or has a single positive end which is asymptotic to the fiber $V$.

It remains to show that there must be a curve of $\tilde{\mathbf{ C}}^N$ with image in $L$ and positive degree. Assume that  all the curves of $\tilde{\mathbf{C}}^N$ in $L$ have degree zero and hence cover fibres of $L$. In this case, since we have $$\tilde{\mathbf{C}}^N \cdot \Sigma = d-1-\mathbf{C}^N \cdot \Sigma,$$  there are at most $d-1-\mathbf{C}^N \cdot \Sigma$ curves of $\tilde{\mathbf{C}}^N$ with image in $L$. Since our constraint points are in good position relative to $J$ it follows, from condition (ii) of the definition of being in good position, that each curve of  $\tilde{\mathbf{C}}^N$ with image in $L$ can intersect only one of the constraint points. Since the curves of $\tilde{\mathbf{C}}^N$ must hit all $2d$ constraints this is a contradiction.

\end{proof}

We can now complete the proof of Proposition \ref{mty}. Choose $\epsilon>0$ as above and let $N$ be large enough for Lemma \ref{inL} to hold.
By the removable singularity theorem, the curves of $\mathbf{ C}^N$ and $\tilde{\mathbf{ C}}^N$ in $(\C P^2 \# \overline{\C P^2})^N \smallsetminus \Sigma$ can be completed  to give collections of curves in $(\C P^2 \# \overline{\C P^2})^N$ with disjoint boundaries on $\partial K$, say $\underline{\mathbf{ C}}^N$ and $\underline{\tilde{\mathbf{ C}}}^N$. The intersection number of these curves is then given by the formula
\begin{equation}
\label{inter}
\underline{\mathbf{ C}}^N \cdot \underline{\tilde{\mathbf{ C}}}^N = \mathbf{ C}^N \cdot \tilde{\mathbf{ C}}^N - k \mathbf{ C}^N \cdot \Sigma.
\end{equation}
where  $k$ is the sum of the degrees of the curves of $\tilde{\mathbf{ C}}^N$ in $L$ (including any mapping to the symplectization $SV_{\Sigma} = L \setminus \Sigma$). This holds since $\underline{\tilde{\mathbf{ C}}}^N$ is homologous to $\tilde{\mathbf{ C}}^N - k \Sigma$ relative to its boundary.
As described above, we have $\mathbf{ C}^N \cdot \tilde{\mathbf{ C}}^N=0$ since $\mathbf{ F}^N$ is a limit of embedded curves. Lemma \ref{inL} implies that $k >0$ and Lemma \ref{componentsN}  implies that  $\mathbf{ C}^N \cdot \Sigma= \mathbf{ C} \cdot \Sigma$. Finally, our assumption that $\mathcal{C}$ is nonempty  implies that $\mathbf{ C} \cdot \Sigma >0$.
Hence, from equation \eqref{inter} we derive the inequality
 $$
 \underline{\mathbf{ C}}^N \cdot \underline{\tilde{\mathbf{ C}}}^N<0.
 $$
As $\underline{\mathbf{ C}}^N$ and $\underline{\tilde{\mathbf{ C}}}^N$ are genuine holomorphic curves this contradicts positivity of intersection and therefore  completes the proof of Proposition \ref{mty}.

\end{proof}

Using Proposition \ref{mty} we now detect the curves $f^{\epsilon}_N$. Consider again the smooth family of curves $f^N_M$ for $M \in [0,M_0]$ such that $[f^N_{M}]$ is equal to ${\mathcal M}_d(J^N_M, p_1, \dots, p_{2n})$.


\begin{lemma}\label{boundary} Given  $\epsilon>0$ sufficiently small, for all $N$ sufficiently large there exists an $M_1^{\epsilon}=M_1^{\epsilon}(N)>0$ and a curve $f^{\epsilon}_N$ such that $[f^{\epsilon}_N] = {\mathcal M}_d(J^N_{M_1^{\epsilon}}, p_1, \dots, p_{2n})$
and $d_K(f^{\epsilon}_N) =\epsilon$.
\end{lemma}

\begin{proof}
Let $M_1^{\epsilon}$ be the minimal $M \in [0,M_0]$ such that $d_K(f^N_M) \ge \epsilon$. Such an $M_1^{\epsilon}$ exists by Proposition \ref{mty}. We claim that $d_K(f^N_{M_1^{\epsilon}}) = \epsilon$ and so setting $f^{\epsilon}_N=f^N_{M_1^{\epsilon}}$ we will be done. By the definition of $d_K$, it is sufficient to show that $(f^N_{M_1^{\epsilon}})^{-1}(K)$ has a single component and $f^N_{M_1^{\epsilon}}$ is transverse to $\partial K$. By the definition of $M_1^{\epsilon}$ we know that $(f^N_M)^{-1}(K)$ has a single component for $M<M_1^{\epsilon}$. Hence if $(f^N_{M_1^{\epsilon}})^{-1}(K)$ had multiple components then all but one of these must be intersections with $K$ of holomorphic curves tangent to $\partial K$ from the outside. But, by the maximum principle, no such tangencies can occur since $K$ was chosen, in Step $2$, to be a subset of $(\C P^2 \smallsetminus E)^{\infty}_-$ of the form  $\C P^2 \smallsetminus E \cup( \partial E \times [\tau,0])$ . Next, if $f^N_{M_1^{\epsilon}}$ is somewhere tangent to $\partial K$ then, provided $\epsilon$ is chosen sufficiently small, as $F$ is transverse to $\partial K$ we must also have $d_K(f^N_M) \ge \epsilon$ for $M$ slightly less than $M_1^{\epsilon}$. This is also a contradiction.
\end{proof}

\bigskip

\noindent{\bf Step 7.} {\it Completion of the proof of Proposition \ref{mkd}.} To complete  the proof of Proposition \ref{mkd} we first make the following simple but crucial observation.

\begin{lemma}
For almost every  $\epsilon>0$ there are no  finite energy $J$-holomorphic curves $\widetilde{F}$  in $(\C P^2 \smallsetminus E)^{\infty}_-$ with deformation index zero and $d_K(\widetilde{F}) = \epsilon$.
\end{lemma}

\begin{proof} There are only countably many moduli spaces of such curves, which by regularity and Lemma \ref{f} are of dimension zero. Hence $d_K$ takes countably many finite values on these spaces. \end{proof}

With this, we can choose our $\epsilon>0$ to be arbitrarily small, so that, for example Lemma \ref{boundary} holds,  and we may assume that there are no  rigid  finite energy $J$-holomorphic curves $\widetilde{F}$  in $(\C P^2 \smallsetminus E)^{\infty}_-$ with  $d_K(\widetilde{F}) = \epsilon$. By  Lemma \ref{boundary},  there is a sequence of  curves $f^{\epsilon}_N$ with $[f^{\epsilon}_N] = {\mathcal M}_d(J^N_{M_1^{\epsilon}}, p_1, \dots, p_{2n})$ and $d_K(f^{\epsilon}_N)=\epsilon$. Passing to a subsequence, if necessary, we may assume that the $f^{\epsilon}_N$ converge as $N \to \infty$ to a limiting building $\mathbf{ F}^{\epsilon}$. We will prove that any such limit $\mathbf{ F}^{\epsilon}$ must have a rigid top level curve $F^{\epsilon}$ with $d_K(F^{\epsilon}) = \epsilon$. Since such curves are forbidden by our choice of $\epsilon$ we will have arrived at the desired contradiction.

There are two cases to consider.

\medskip

\noindent{\it Case 1.}
In the first case we suppose that  $M_1^{\epsilon}( N)$ remains bounded as $N \to \infty$. It can then be assumed to converge to, say,  $\overline{M}_1$.
The components of $\mathbf{ F}^{\epsilon}$ with image in $E^{\infty}_+$ are then holomorphic with respect to $J_{\overline{M}_1}$. As our $2d$ points are in general position with respect to this almost-complex structure (by condition $(i)$ of being in good position relative to $J$), it follows from Proposition \ref{index} that the component  of the limit with image in $(\C P^2 \smallsetminus E)^{\infty}_-$, say $F^{\epsilon}$,  is rigid. However,  by uniform convergence on compact subsets we also have $d_K(F^{\epsilon})= \epsilon$ which contradicts our choice of $\epsilon$ above.

\medskip

\noindent{\it Case 2.}
If  $M_1^{\epsilon}(N)$ is unbounded we may assume that $M_1^{\epsilon}(N) \to \infty$  as $N \to \infty$. In this case ${\mathbf{ F}}^{\epsilon}$ has components with images in $(\C P^2 \smallsetminus E)^{\infty}_-$, $E^{\infty}_+ \smallsetminus \Sigma$ and $L$.  It may also have components with images in the symplectizations $SE$ and $SV_{\Sigma}$ but, for convenience, we will group  these with the components in $E^{\infty}_+ \smallsetminus \Sigma$, see Remark \ref{buildgs}.

We first observe that $\mathbf{ F}^{\epsilon}$ has exactly one component with image in $(\C P^2 \smallsetminus E)^{\infty}_-$. To see this note
that at least one component of $\mathbf{ F}^{\epsilon}$ with image in $(\C P^2 \smallsetminus E)^{\infty}_-$, say $F^{\epsilon}$,  is close (in the sense of $d_K$) to $F$. The curve  $F^{\epsilon}$ therefore has degree $d$. Since $\mathbf{ F}^{\epsilon}$ has total degree $d$ and curves in $(\C P^2 \smallsetminus E)^{\infty}_-$ of nonpositive degree do not exist (they would have negative area), $F^{\epsilon}$ is the only such curve. As the almost-complex structure is regular, it also follows from Lemma \ref{f} that all the ends  of $F^{\epsilon}$ are asymptotic to covers of $\gamma_1$.
Again, by our choice of the curves $f^{\epsilon}_N$ with $d_K(f^{\epsilon}_N) =\epsilon$ and the fact that they converge  to $\mathbf{ F}^{\epsilon}$ uniformly on compact sets, we get  $d_K(F^{\epsilon}) =\epsilon$. To derive a contradiction as in the previous case it now suffices for us  to show that the index of $F^{\epsilon}$ is zero. To do this we must first manage the other curves of $\mathbf{ F}^{\epsilon}$ more carefully and then invoke our assumption that $\mathcal{C}$ is not empty.

 Let ${\mathcal{S}}^{\epsilon}$ be the domain of $\mathbf{ F}^{\epsilon}$.  Arguing as in the proof of Lemma \ref{componentsN}, we can conclude that  there is a subset ${D}^{\epsilon}$ of ${\mathcal{S}}^{\epsilon}$ which is diffeomorphic to $D$, and an unambiguously defined bijection between the components of $\mathcal{S} \smallsetminus D$ and those of ${\mathcal{S}}^{\epsilon} \smallsetminus {D}^{\epsilon}$.
 Denote the collections of the components of $\mathcal{S}^{\epsilon} \smallsetminus D^{\epsilon}$ which correspond to $\mathcal{C}$ and $\tilde{\mathcal{C}}$, by $\mathcal{C}^{\epsilon}$ and $\tilde{\mathcal{C}}^{\epsilon}$, respectively, and set
$\mathbf{ C}^{\epsilon}= \mathbf{ F}^{\epsilon}|_{\mathcal{C}^{\epsilon}}$  and $\tilde{\mathbf{ C}}^{\epsilon} = \mathbf{ F}^{\epsilon}|_{\tilde{\mathcal{C}}^{\epsilon}}$. As before, corresponding components have the same number of marked points,  the curves of $\mathbf{ C}^{\epsilon}$  intersect the exceptional divisor $\mathbf{ C} \cdot \Sigma$ times, counted with multiplicity, and the curves of $\mathbf{ C}^{\epsilon}$  have total area less than  $\pi r^2$ (as $\epsilon$ is arbitrarily small).  With this area bound  and the uniqueness of the top-level curve $F^{\epsilon}$ established above, one can argue precisely as in Lemma \ref{inL} to prove the following.

\begin{lemma} \label{inL2}  The curves of $\mathbf{C}^{\epsilon}$ with image in $L$ all cover the same fibre, $V$. All curves of  $\tilde{\mathbf{C}}^{\epsilon}$ with image in $L$ either have a single positive end asymptotic to the fiber $V$ or are covers of $\Sigma$, and at least one of these curves has positive degree.
\end{lemma}

We also have the following additional constraint on $\mathbf{C}^{\epsilon}$.

\begin{lemma} \label{domainzero}
The curves of $\mathbf{C}^{\epsilon}$ with image in  $E^{\infty}_+ \smallsetminus \Sigma$ have deformation index equal to zero.
\end{lemma}
\begin{proof}

As described above, $\mathbf{ F}^{\epsilon}$ has a single curve, $F^{\epsilon}$,   in $(\C P^2 \smallsetminus E)^{\infty}_-$ and its ends are all asymptotic to multiples of $\gamma_1$. Since $\mathbf{ F}^{\epsilon}$ is the limit of curves of genus zero, the curves of $\mathbf{ F}^{\epsilon}$ with image in $E^{\infty}_+ \smallsetminus \Sigma$ must all have a single positive end  and these must be asymptotic to multiples of $\gamma_1$. Let $G^{\epsilon}$ be a curve of $\mathbf{C}^{\epsilon}$ with image in  $E^{\infty}_+ \smallsetminus \Sigma$ and suppose that the positive end of $G^{\epsilon}$ covers $\gamma_1$ a total of $a^+$ times, and the number of negative ends of $G^{\epsilon}$, counted with multiplicity, is $a^-$.  The index of $G^{\epsilon}$ is then $2(a^+-a^-)$ (see Proposition \ref{indexE} and the description of $E^{\infty}_+ \smallsetminus \Sigma$ in Step 4). It now suffices to show that $a^+=a^-$. As it is a curve with image in $E^{\infty}_+ \smallsetminus \Sigma$,  the area of $G^{\epsilon}$ is $\pi a^+ / S^2 - \pi a^- r^2$. Since this area is bounded from above by  $\pi r^2$ and from below by $0$ we then have $$a^- r^2 S^2 < a^+ < (1 +a^- )r^2 S^2.$$ Thus, for $r$ close enough to $1/S$ the inequalities above imply that $a^+ = a^-$. (Recalling that $a^-$ is bounded by $\mathbf{C}\cdot \Sigma \le d-1$ it suffices to have $r^2  > \left(\frac{d-2}{d-1}\right)\frac{1}{S^2}$).
\end{proof}

With Lemma \ref{inL2} and Lemma \ref{domainzero} in hand we can now show that the index of $F^{\epsilon}$ must be zero, and thus derive the desired contradiction. Let $x$ and $y$  denote the sums of the indices of  the curves of $\mathbf{ F}^{\epsilon}$ with images in $E^{\infty}_+ \smallsetminus \Sigma$ and $L$, respectively. Here the contribution to $y$ of a curve in $L$ with marked points is defined to be its constrained index.
As described in Remark \ref{buildgs}, the fact that $\mathbf{F}^{\epsilon}$ has index zero  implies that $$\mathrm{index}(F^{\epsilon})+x+y-2p =0$$ where $p$ is the number of positive ends of the curves of $\mathbf{F}^{\epsilon}$ with image in $L$ (and hence the number of components in $L$).  Now, in the present context, our (still) standing assumption that $\mathcal{C}$ is nonempty implies that there is at least one curve, $G^{\epsilon}$, of  $\mathbf{C}^{\epsilon}$ with image in $E^{\infty}_+ \smallsetminus \Sigma$ and another curve, $H^{\epsilon}$, of  $\mathbf{C}^{\epsilon}$ with image in $L$ such that the positive end of $H^{\epsilon}$ is asymptotic to a negative end of $G^{\epsilon}$. Lemma \ref{inL2} applies here to say that every curve of $\mathbf{ F}^{\epsilon}$ with image in $L$ is either a cover of $\Sigma$ or has a single positive end and this is equal to the negative asymptotic limit of $G^{\epsilon} \in \mathbf{ C}^{\epsilon}$. By Lemma \ref{domainzero},  the index of $G^{\epsilon}$ is zero. Therefore,  it follows from condition $(iii)$ of  the definition of being in good position, that any curve of $\tilde{\mathbf{ C}}^{\epsilon}$ with image in $L$ and containing a marked point has constrained index at least $2$ (twice its number of positive ends). The curves of $\mathbf{ C}^{\epsilon}$ in $L$ also have index at least $2$, as do any curves of $\tilde{\mathbf{ C}}^{\epsilon}$ with a positive end but without marked points, and so summing over all curves of  $\mathbf{ F}^{\epsilon}$ with image in $L$ we get $$y-2p \ge 0.$$  This implies that $\mathrm{index}(F^{\epsilon})+x \le 0$. But as the almost complex structures on $(\C P^2 \smallsetminus E)^{\infty}_-$ and $E^{\infty}_+ \smallsetminus \Sigma$ are regular, by Lemma \ref{f} and Lemma \ref{h} both $\mathrm{index}(F^{\epsilon})$ and $x$ must be nonnegative. Thus, we have $\mathrm{index}(F^{\epsilon})=0$ and the desired contradiction.

\medskip

The contradictions at the ends of both these cases complete the proof of Proposition \ref{mkd} and hence Theorem \ref{key}.

\section[The proof of  Theorem \ref{thm2}]{The proof of  Theorem \ref{thm2}}

Let $M = \CC P^2(R) \times \CC^{n-2}$ and denote the obvious split symplectic form on $M$ by $\omega_R$. Let $E(a_1,a_2,\ldots,a_n)$ be the ellipsoid
$$\left\{ (z_1, \ldots, z_n) \in \CC^n \bigmid \sum_{i=1}^n\frac{|z_i|^2}{a_i^2} \leq1 \right\}.$$
Suppose that for any $S>0$ there exists a symplectic
embedding $$\phi(S):E(1,S,\ldots,S) \hookrightarrow M.$$
To prove Theorem \ref{thm2} we must show that this implies
that $R \geq \sqrt{3}$.

Fix an integer $d\geq1$, and a positive real number $S$ such that $S^2$ is irrational and $S^2> 3d$. Set $\phi= \phi(S)$.

\begin{lemma} \label{ellfam} For $t \in [1/S,1]$, there exists a smooth family of symplectic
embeddings
$$\phi_t : E_t=E(u(t),u(t)S,\dots,u(t)S)  \hookrightarrow M$$ such that:
\begin{itemize}
\item $u(t) \colon [1/S,1] \to (0,1]$ satisfies $u(t)=t$ for $t$ close to $1$ and $u(1/S)=1/S$;
  \item for $t$ in some  neighborhood of $1$ the embeddings $\phi_t$ are just the restrictions of $\phi$ to $E_t$;
  \item $\phi_{1/S}$ coincides with the inclusion $$i_S \colon E(1/S, 1, \dots, 1) \to E(1,1) \times \CC^{n-2} \subset  M.$$
\end{itemize}
\end{lemma}

\begin{proof}
This is a simple application of the Extension after Restriction Principle, \cite{eh1}.
For an $\epsilon  \in (0, 1/S]$, let  $u \colon [1/S,1] \to (0,1]$ be any smooth function which equals $1/S$ near $t=1/S$, is nonincreasing  on $[1/S, 1/3]$, is equal to $\epsilon$ on $[1/3,2/3]$, is increasing on $(2/3,1]$, and equals $t$ near $t=1$. This choice fixes the domains $E_t$ of the desired embeddings $\phi_t$.

Now choose an embedded Darboux ball $B^{2n}(\delta)$ in $M$ for some $\delta>0$. Without loss of generality we may assume that $\phi(0)=0 \in B^{2n}(\delta)$ and that the linearization of $\phi$ at zero, in the standard symplectic coordinates on  $E(1, S, \dots, S)$ and $B^{2n}(\delta)$, is the identity.

For $z \in E_t$, set
$$
\phi_t(z) =
\begin{cases}
i_S|_{E_t }     & \text{for $1/S \leq t \leq 1/3$}, \\
\frac{1}{3t-1}\phi((3t-1)z))      & \text{for $1/3 < t < 2/3$},\\
\phi|_{E_t}     & \text{for $2/3\leq t\leq1$}.
\end{cases}
$$
When  $\epsilon$ is sufficiently small the middle piece is then a well-defined continuous isotopy of smooth embeddings  from
$i_S|_{E_{1/3}}$ to $\phi|_{E_{2/3}} = \phi|_{E_{1/3}}$ (with images in  $B^{2n}(\delta)$), as constructed in the  Extension after Restriction Principle. Overall, we have a continuous isotopy of smooth embeddings with the desired properties. Reparameterizing the dependence of the $\phi_t$ on $t$ appropriately, as in say Appendix A of \cite{schl},  we then get the desired smooth isotopy.

\end{proof}

To the family of embeddings $\phi_t$  we will associate a family of moduli spaces of holomorphic curves.  Theorem \ref{key} will allow us to
prove that the moduli space corresponding to $t=1/S$ is nontrivial. With this we will prove that the moduli space for $t=1$ is also nontrivial.
The holomorphic curves which represent this nontrivial space will then yield the proof Theorem \ref{thm2}.

\subsection{Moduli spaces associated to $\phi_t$} \label{threeone}

Before defining our moduli spaces we first compactify an open subset of the target $(M, \omega_R)$ which is large enough to contain the desired curves. We do this so that we may later use the standard compactness theorems.

Let $\CC P^1(2T)$ denote $\CC P^1$ equipped with its standard  symplectic structure multiplied by $4T^2$. We complete the open subset $\CC P^2(R) \times (B^2(T))^{n-2}$ of $M$, by embedding each $B^2(T)$--factor into $\CC P^1(2T)$ as the lower hemisphere. We will denote the resulting manifold by
$$\widehat{M}(T)= \CC P^2(R) \times (\CC P^1(2T))^{n-2}.$$
When convenient we will
equip the $\CC P^1(2T)$--factors of $\widehat{M}(T)$ with complex coordinates $z_3, \dots, z_n$ such that $z_j=0$ corresponds to the center of the appropriate copy of $B^2(T)$.  We will also consider  the $(n-2)$-dimensional torus, $\mathbb{T}^{n-2}$, acting symplectically on $\widehat{M}(T)$ by acting on the $\CC P^1(2T)$ factors in the standard way, by rotations.

Now we need to set the size of $T$. Choose a $T_1=T_1(S)>0$ such that
$$\phi_t(E_t) \subset  \CC P^2(R) \times (B^2(T_1))^{n-2}$$ for all $t \in [1/S,1]$. We first assume that  $T>T_1$. With this, each $\phi_t$ can be viewed as a symplectic embedding of $E_t$ into $\widehat{M}(T)$. We can then consider the
negative symplectic completion of each $\widehat{M}(T) \smallsetminus E_t$ which, as a set, is given by
 $$
 (\widehat{M}(T) \smallsetminus E_t)^{\infty}_- = (\widehat{M}(T) \smallsetminus E_t) \cup (\partial E_t \times(-\infty,0]).
 $$
 (Here, and in what follows, we identify $E_t$ with its image $\phi_t(E_t)$.)

 Now let $$U(T) = \CC P^2(R) \times ((\C P^1(2T))^{n-2} \smallsetminus (B^2(T))^{n-2})$$ and
 $$U(T_1) = \CC P^2(R) \times ((\C P^1(2T))^{n-2} \smallsetminus (B^2(T_1))^{n-2}).$$
 These sets are contained in $\widehat{M}(T) \smallsetminus E_t$ and can thus be considered as subsets of $(\widehat{M}(T) \smallsetminus E_t)^{\infty}_-$.
 We will always view them as such.  However, it will be useful to sometimes identify the complements of these sets (in $(\widehat{M}(T) \smallsetminus E_t)^{\infty}_-$)  as subsets of $(M \smallsetminus E_t)^{\infty}_-$. In particular, we will choose $T$ so that certain holomorphic curves in $(\widehat{M}(T) \smallsetminus E_t)^{\infty}_-$ must be contained in $(U(T))^c$ and can thus be identified with curves in $(M \smallsetminus E_t)^{\infty}_-$.

Let $\mathcal{J}_t(T)$ be the space of almost-complex structures on $(\widehat{M}(T) \smallsetminus E_t)^{\infty}_-$ which turn $(\widehat{M}(T) \smallsetminus E_t)^{\infty}_-$ into an almost-complex manifold with cylindrical end and which are adjusted to the
symplectic form on $\widehat{M}(T) \smallsetminus E_t$ in the sense of \cite{BEHWZ}.


\begin{definition} \label{acs} Let $\mathcal{J}_{t,R}(T) \subset \mathcal{J}_t(T)$ be the collection of almost-complex structures $J$ such that any connected finite energy $J$-holomorphic (cusp) curve in $(\widehat{M}(T) \smallsetminus E_t)^{\infty}_-$  with at least one asymptotic end and area bounded by $d\pi R^2$ has image contained in the interior of $(U(T))^c$.
\end{definition}

\begin{lemma}\label{diamond}
For sufficiently large $T$, the space $\mathcal{J}_{t,R}(T)$ is open and nonempty.
\end{lemma}

\begin{proof}
The fact that each $\mathcal{J}_{t,R}(T)$ is open follows immediately from the compactness theorem for finite energy curves of bounded symplectic area.
It suffices to show that for large enough $T$, $\mathcal{J}_{t,R}(T)$ is nonempty.

Now choose $T$ sufficiently large that any point $p \in \partial (B^2(\frac{T+ T_1}{2})^{n-2}) \subset \C^{n-2}$ lies at the center of a ball of radius $R \sqrt{d}$. For example we may take $T = T_1 + 2R \sqrt{d}$.

Let $J$ be an almost complex structure on $M = \C P^2(R) \times \C^{n-2}$  of the form $J_R \oplus J_0$ where $J_0$ is the standard complex structure on $\C^{n-2}$.
Then the projection to the $\C^{n-2}$ factor of a holomorphic curve in $M$ is holomorphic. Suppose that such a holomorphic curve mapping to $\C^{n-2}$ has boundary contained in $(B^2(T_1))^{n-2} \cup (\C^{n-2} \smallsetminus (B^2(T))^{n-2})$. Then the interior of the curve necessarily intersects a point $p \in \partial (B^2(\frac{T+ T_1}{2})^{n-2}) \subset \C^{n-2}$ and its boundary is disjoint from the ball centered at $p$ of radius $R \sqrt{d}$. Thus by the monotonicity theorem (see for instance \cite{gr}, section $2.3.E'_2$) the curve has area at least $\pi d R^2$.


Note that $$U(T_1) \smallsetminus U(T) = (B^2(T))^{n-2} \smallsetminus (B^2(T_1))^{n-2}$$ can be viewed as a subset of both $\widehat{M}(T)$ and $M$. Choose a $J^T \in  \mathcal{J}_t(T)$ such that  $J^T$ has the form $J_R \oplus J_0$  on  $U(T_1) \smallsetminus U(T)$.  Since any connected finite energy $J^T$-holomorphic (cusp) curve in $\widehat{M}(T)$ with at least one asymptotic end must intersect the complement of $U(T_1)$, it follows from the monotonicity argument above  that if the symplectic area of such a curve is at most $d\pi R^2$ then  its image must be contained in the interior of $(U(T))^c$. Thus, $J^T$ is in $\mathcal{J}_{t,R}(T)$ and we are done.
\end{proof}

Fixing a $T$ such that  $\mathcal{J}_{t,R}(T)$ is nonempty we will henceforth simplify our notation by writing $\widehat{M}$ instead of $\widehat{M}(T)$, $\mathcal{J}_t$ instead of $\mathcal{J}_t(T)$, and $\mathcal{J}_{t,R}$ instead of $\mathcal{J}_{t,R}(T)$.

We now define a moduli space of curves in each  $(\widehat{M} \smallsetminus E_t)^{\infty}_-$.
First note that the standard contact form $\alpha_{E_t}$ on $\partial E_t$ has a nondegenerate closed Reeb orbit $\partial E_t\cap \{z_j=0; j \neq 1\}$ of shortest action and a Morse-Bott family of Reeb orbits in $\partial E_t \cap \{z_1=0\}$. For simplicity, we will denote the closed Reeb orbit on $\partial E_t$ with the smallest  action by $\gamma_1$, i.e., we suppress the dependence on $t$.  The action of $\gamma_1$ is $\pi u(t)^2$ (where $u(t)$ is the function from Lemma \ref{ellfam}) and the
Conley-Zehnder index of its $r$-fold cover, $\gamma_1^{(r)}$, is given by
$$\mu(\gamma_1^{(r)})=2r+ (n-1)\left(2 \Big\lfloor  \frac{r}{S^2}  \Big\rfloor +1\right).$$

%
For a $J_t \in \mathcal{J}_t$,
%
consider a $J_t$--holomorphic curve in $(\widehat{M} \smallsetminus E_t)^{\infty}_-$ which has genus zero,  first Chern number $e$, and  $s^-$
negative ends asymptotic to multiples of $\gamma_1$ such that  the $i^{th}$ such end  covers $\gamma_1$ a total of $a^-_i$ times.  (We refer to the discussion after Lemma \ref{even} for the definition of the Chern number of a finite energy curve. See also Remark \ref{fchern}, which is valid if our curve happens to lie in $M$.)
The virtual dimension of the moduli space represented by this curve is
\begin{equation}
\label{indexx}
2(n-3)(1-s^-) -2s^- + 2e -2\sum_{i=1}^{s^-}\left(a^-_i+(n-1)\left( \Big\lfloor  a^-_i/S^2  \Big\rfloor \right)\right).
\end{equation}

We define $\mathcal{K}_t$ to be the moduli space of somewhere injective $J_t$--holomorphic planes in $(\widehat{M} \smallsetminus E_t)^{\infty}_-$ which have finite energy, first Chern number $3d$, and whose negative end is asymptotic to
$\gamma_1^{(3d-1)}$. Since $S> \sqrt{3d}$, the formula above implies that the virtual dimension of each $\mathcal{K}_t$ is  zero.

\subsection{A compact cobordism}

Now, let $\{\mathcal{J}_{t,R}\}$ be the space of smooth $[1/S,1]$--families  of almost-complex structures $\{J_t\}$ such that $J_t$ belongs to $\mathcal{J}_{t,R}$ for all $t \in [1/S,1]$.
For $\{J_t\} \in \{\mathcal{J}_{t,R}\}$ set $$\mathcal{K} = \{ (C,t) \mid t \in [1/S, 1],\, C \in \mathcal{K}_t\}.$$
Any curve representing a class $C$ that appears in $\mathcal{K}$ must intersect
$(U(T_1))^c$. On this subset of $\widehat{M}$ we are free to perturb the family $\{J_t\}$ arbitrarily and still remain in $\{\mathcal{J}_{t,R}\}$.
Hence for a generic choice of the family $\{J_t\} \in \{\mathcal{J}_{t,R}\}$, the space $\mathcal{K}$ is  an oriented (from \cite{bormon} for instance), $1$-dimensional manifold with boundary equal to $\mathcal{K}_{1/S} \amalg \mathcal{K}_1$. By \cite{BEHWZ},
$\mathcal{K}$ is compact modulo convergence to equivalence classes of  holomorphic buildings in the spaces $(\widehat{M} \smallsetminus E_t)^{\infty}_-$. In this section we prove that in fact all buildings which occur in these limits represent classes in $\mathcal{K}$.

\begin{proposition}
\label{compactt}
For a generic choice of the family $\{J_t\} \in \{\mathcal{J}_{t,R}\}$ the space $\mathcal{K}$ is compact.
\end{proposition}

\subsubsection{Proof of Proposition \ref{compactt}}

\noindent{\bf Step 1.} We begin by specifying our choice of the family $\{J_t\} \in \{\mathcal{J}_{t,R}\}$. This choice is motivated by the desire to avoid certain holomorphic curves with negative virtual indices.
\begin{lemma}
\label{goodJt}
For a generic choice of the family  $\{J_t\} \in \{\mathcal{J}_{t,R}\}$, for each $t \in [1/S,1]$ every simple,  genus zero, finite energy,  $J_t$-holomorphic curve in $(M \smallsetminus E_t)^{\infty}_-$ which has negative ends, all of which are asymptotic to multiples of $\gamma_1$, has a nonnegative virtual index.
\end{lemma}

\begin{proof}

For any $e \in \mathbb{Z}$, $s^- \in \mathbb{N}$  and collection $$\vec{a}= (a^-_1, \dots, a^-_{s^-}) \in \mathbb{N}^{s^-}$$ let $\mathcal{K}_t(e, s^-,\vec{a})$ be the moduli space of simple genus zero $J_t$--holomorphic curves in $(M \smallsetminus E_t)^{\infty}_-$ which have finite energy, first Chern number $e$, and  $s^-$ negative ends, the $i^{th}$ of which is asymptotic to $\gamma_1^{(a^-_i)}$. For a given family $\{J_t\}$ set $$\mathcal{K}(e, s^-,\vec{a}) = \{ (C,t) \mid t \in [1/S, 1],\, C \in \mathcal{K}_t(e, s^-,\vec{a})\}.$$
For a generic choice of $\{J_t\} \in \{\mathcal{J}_{t,R}\}$ the space $\mathcal{K}(e, s^-,\vec{a})$ is a  manifold of dimension
\begin{equation*}
1+2(n-3)(1-s^-) + 2e -2\sum_{i=1}^{s^-}\left(a^-_i+(n-1)\left( \Big\lfloor  a^-_i/S^2  \Big\rfloor \right)\right).
\end{equation*}
This number is either negative, in which case $\mathcal{K}(e, s^-,\vec{a})$ is empty, or, as it is necessarily an odd number, it is strictly positive and hence we can show that for  any $t \in [1/S,1]$  the virtual index of every simple $J_t$-holomorphic curve which represents a class in  $\mathcal{K}_t(e, s^-,\vec{a})$  must be nonnegative. Since the collections $(e,s^-, \vec{a})$ are countable we are done.

\end{proof}

In what follows we will assume that  the family $\{J_t\}$ has been chosen as in Lemma \ref{goodJt}.

\bigskip

\noindent{\bf Step 2.}
Let $\mathbf{ F}$ be a holomorphic building in  $(\widehat{M} \smallsetminus E_t)^{\infty}_-$, for some $t \in [1/S,1]$, that represents a limit point of $\mathcal{K}$. Then $\mathbf{ F}$ consists of a finite collection of holomorphic curves with  images in either $(\widehat{M} \smallsetminus E_t)^{\infty}_-$ or $SE_t$,  the symplectization of $\partial E_t$ equipped with the cylindrical almost-complex structure induced by $J_t$.
We are still working with the notational convention established at the end of Section \ref{split}.  In particular by a curve of $\mathbf{ F}$ we mean a single component of the building $\mathbf{F}$ whose domain is a (possibly) punctured sphere.

To prove Proposition \ref{compactt} it suffices to prove that for our current (generic) choice of the family $\{J_t\}$ the following holds (see Remark \ref{uniquelimit}).
\begin{proposition}
\label{compactF}
The limiting building $\mathbf{F}$ consists of a single curve with image in $(\widehat{M} \smallsetminus E_t)^{\infty}_-$. It is simple, has one negative end, and this end is asymptotic to $\gamma^{(3d-1)}$.
\end{proposition}

In this second step, we establish some initial restrictions on the individual curves of $\mathbf{F}$ following Section \ref{finer}.  We first consider curves of $\mathbf{ F}$ with nonpunctured domains.

\begin{lemma} \label{Fclosed}
Let $F$ be a nonconstant closed curve of $\mathbf{F}$ with image  in $(\widehat{M} \smallsetminus E_t)^{\infty}_-$.  Then $c_1(F)>0$ and $\mathrm{index}(F)>0$.
\end{lemma}

\begin{proof}
By Definition \ref{acs}, the building $\mathbf{ F}$ is the limit of curves with image in
the interior of $(U(T))^c$. Hence, $F$ (and every other curve of $\mathbf{ F}$ with image in $(\widehat{M} \smallsetminus E_t)^{\infty}_-$)  has image contained
in  $(U(T))^c.$ By definition, this latter space is
simply the negative symplectic completion of $$(\CC P^2(R) \times (B^2(T))^{n-2}) \smallsetminus E_t.$$ Since $F$ is closed we have, by monotonicity of $\CC P^2(R)$,  $$c_1(F) = \frac{3}{\pi R^2} \omega_R(F)>0.$$ Then $\mathrm{index}(F) = 2(n-3) + 2c_1(F)>0$, since $n \geq3$.

\end{proof}

Now we consider curves with negative ends.

\begin{lemma} \label{F}
Let $F$ be a finite energy curve in $(\widehat{M} \smallsetminus E_t)^{\infty}_-$ of genus zero such that $F$ has at least one negative end and $c_1(F)=e \le 3d$. Then the ends of $F$ are all asymptotic to multiples of $\gamma_1$, $e>0$, and the total multiplicity of all negative ends is at most $e-1$.
\end{lemma}

\begin{proof}
Suppose that $F$ has $s^-_1 \ge 0$ negative ends asymptotic to multiples of $\gamma_1$,  and $s^-_2 \ge 0$ negative ends asymptotic to multiples of periodic orbits in the Morse-Bott family, where now $s^-_1 + s^-_2 \ge 1$.  Say that the $i^{th}$ negative end covering  $\gamma_1$ does so $a^-_i$ times,  and the $i^{th}$ negative end covering an orbit in the Morse-Bott family does so $b^-_i$ times. Then, as computed by Bourgeois
in \cite{bo}, the  virtual deformation index of $F$ (in the moduli space of finite energy curves with the same asymptotics, modulo reparameterization) is

\begin{eqnarray*}
\mathrm{index}(F) & = & (n-3)(2-s^-_1-s^-_2) +2e-\sum_{i=1}^{s^-_1}\left( 2a^-_i +(n-1)(2 \lfloor a^-_i / S^2 \rfloor +1)\right)\\
{} & {} & - \sum_{i=1}^{s^-_2} \left( 2b^-_i +2 \lfloor b^-_i  S^2 \rfloor +1\right) + \frac{1}{2} s^-_2 (2(n-2))\\
{} & = & 2(n-3)(1-s^-_1) - 2s^-_1 +2e-2\sum_{i=1}^{s^-_1}\left( a^-_i +(n-1) \lfloor a^-_i / S^2 \rfloor \right)\\
{} & {} &- 2\sum_{i=1}^{s^-_2} \left( b^-_i + \lfloor b^-_i  S^2 \rfloor \right)
\end{eqnarray*}
Since $S^2 > 3d$, it follows from this equation that if $\mathrm{index}(F) \geq 0$, then
$e>0$, $s^-_2 =0$ (and hence $s^-_1 \ge 1$), $\sum_{i=1}^{s^-_1} a^-_i \le e-1$ and
\begin{equation}
\label{positiveindex}
\mathrm{index}(F) = 2(n-3)(1-s^-_1) - 2s^-_1 +2e-2\sum_{i=1}^{s^-_1} a^-_i.
\end{equation}
If $F$ is simple, then our choice of the family $\{J_t\}$ implies that $\mathrm{index}(F)\geq 0$ and we are done.

Assume then that $F$ is not simple. By Proposition \ref{multiplecover}, the curve $F$ is  the $p$-fold cover of a simple curve $\widetilde{F}$ for some $p > 1$. The discussion above now implies that  $\widetilde{F}$ has no ends asymptotic to orbits in the Morse-Bott family, $c_1(\widetilde{F}) >0$, and if $\widetilde{F}$ has $\tilde{s}^-_1$ negative ends asymptotic to $\gamma_1$ with the $i^{th}$ such end covering it $\tilde{a}^-_i$ times, then
\begin{equation*}
\label{ }
\sum_{i=1}^{\tilde{s}^-_1} \tilde{a}^-_i \le e/p-1.
\end{equation*}
For $F$ itself, this implies $s^-_2=0$,  $e= p c_1(\widetilde{F})>0$, and
\begin{equation}
\label{cover}
\sum_{i=1}^{{s}^-_1}{a}^-_i = p \sum_{i=1}^{\tilde{s}^-_1} \tilde{a}^-_i \le e-p < e-1,
\end{equation}
as desired.
\end{proof}

\begin{lemma} \label{lowindex}
Let $F$ be a finite energy curve  in $(\widehat{M} \smallsetminus E_t)^{\infty}_-$ of genus zero such that $F$ has at least one negative end and $c_1(F)=e \le 3d$. If the number of negative ends of $F$ is $s$, then
\begin{equation}
\label{lowerindex}
\mathrm{index}(F) \ge 2(n-2) -2(n-2)s.
\end{equation}
If $s=1$, then $\mathrm{index}(F) \geq 0$ with equality if and only if $F$ is simple.
\end{lemma}

\begin{proof}
Lemma \ref{F}  implies that the ends of $F$ are all asymptotic to multiples of $\gamma_1$. It also implies that  the index formula  \eqref{positiveindex} holds (with $s^-_1$ replaced here by $s$) regardless as to whether or not the virtual index of $F$ is positive. By Proposition \ref{multiplecover} we may assume that $F$  is the $p$-fold cover of a simple curve $\widetilde{F}$ with $\tilde{s}$ negative ends. Using  formula \eqref{positiveindex} for $F$ and $\widetilde{F}$, together with
inequality \eqref{cover}, we then get
\begin{eqnarray*}
\mathrm{index}(F)  & \geq  & 2(n-3)[(p\tilde{s} -s) +(1-p)] +2(p\tilde{s} -s)  + p (\mathrm{index} (\widetilde{F}))\\
{}  & \geq  & p(2(n-2)(\tilde{s}-1) +2 \tilde{s}) - 2(n-2)(s -1) -2 s\\
{}  & \geq  & 2(n-2) - 2(n-2)s
\end{eqnarray*}
as $\mathrm{index}(\widetilde{F})\ge 0$ by our choice of $\{J_t\}$, $p\geq 1$, $s \ge 1$, and $ s \le p\tilde{s}$.

When $s=1$ the second to last inequality above yields $\mathrm{index}(F) \geq 2(p-1).$
This implies the last statement of Lemma \ref{lowindex}.
\end{proof}

\begin{lemma}
\label{G}
Let $G$ be a curve of $\mathbf{ F}$ with image  in the symplectization $SE_t$. The positive and negative ends of $G$ are all  asymptotic to some multiple of $\gamma_1$
and the positive ends cover $\gamma_1$ at least as many times as the negative ends.
\end{lemma}

\begin{proof}
This follows exactly as in the proof of Lemma \ref{g}. That is,  one needs only to
apply Stokes' Theorem twice and to invoke the fact, which follows from Lemma \ref{F}, that
the curves of   $\mathbf{ F}$ with image in $(M \smallsetminus E_t)^{\infty}_-$ have at most $3d-1$
total negative ends when counted with multiplicity, and all these ends are asymptotic to multiples of $\gamma_1$.

\end{proof}

The following lemma summarizes some of the key global features of $\mathbf{ F}$ which we can now infer.
\begin{lemma}\label{properties}
Every end of every nontrivial curve of $\mathbf{ F}$ is asymptotic to a multiple of $\gamma_1$. The Chern number of each curve of $\mathbf{ F}$ with image in  $(\widehat{M} \smallsetminus E_t)^{\infty}_-$ lies in $(0,3d]$, and the sum of all these Chern numbers is $3d$.
 \end{lemma}

 \begin{proof}
 By \cite{BEHWZ}, the sum of the Chern numbers of all curves of $\mathbf{ F}$ with image in  $(\widehat{M} \smallsetminus E_t)^{\infty}_-$ is equal to $3d$. Lemma \ref{Fclosed} and Lemma \ref{F} imply that the Chern number of  each such nontrivial curve is positive. The statement about the  Chern numbers then follows. With the upper bound on the Chern numbers in hand  we can invoke  Lemma \ref{F} and Lemma \ref{G} to obtain the first statement.
 \end{proof}

\bigskip

\noindent{\bf Step 3.}
We are now in a position to prove Proposition \ref{compactF} and hence Proposition \ref{compactt}. By \cite{BEHWZ},  the compactifications of the curves of $\mathbf{F}$ fit together to form a continuous map $\overline{\mathbf{ F}}$ from  the unit disc to the closure of $\widehat{M} \smallsetminus E_t$ which takes the boundary of the disc to
 $\gamma_1^{(3d-1)}$. Moreover, there is a single curve $B$ of $\mathbf{ F}$ at the lowest level (in our notation we call this level $1$)  which has a negative end. In particular, $B$ has exactly one negative end and this covers
$\gamma_1$ precisely $3d-1$ times.

If $B$ maps to $(\widehat{M} \smallsetminus E_t)^{\infty}_-$ then we are done. For, in this case, Lemma \ref{F} implies that $c_1(B) \ge 3d$.  It then follows  from  Lemma  \ref{properties}  that $c_1(B) = 3d$ and $B$ is the only curve of $\mathbf{F}$. Since $B$ has index zero it follows from Lemma \ref{lowindex} that it is also simple.

It remains for us to deal with the case in which  the special curve $B$ of $\mathbf{ F}$ maps to $SE_t$.
We begin by replacing $\mathbf{ F}$ by a holomorphic building $\mathbf{\widetilde{ F}}$ consisting of a subset of the curves from $\mathbf{ F}$. This is defined to be the smallest subset of curves of $\mathbf{ F}$  which contains $B$ and satisfies the following condition: if $F$ belongs to $\mathbf{\widetilde{F}}$ and $F'$ shares a matching asymptotic end with  $F$ then $F'$ belongs to $\mathbf{\widetilde{F}}$. By construction, $\mathbf{\widetilde{F}}$ has genus $0$,  the sum of the Chern numbers of the  curves of $\mathbf{\widetilde{F}}$, $c_1(\mathbf{\widetilde{F}})$,  is at most $3d$, and $\mathbf{\widetilde{F}}$ has a distinguished level $1$ curve, $B$, with negative end asymptotic to $\gamma_1^{(3d-1)}$. The definition implies that $\mathbf{\widetilde{F}}$ contains no closed curves (or ghost bubbles). The key observation is that since $\mathbf{F}$ has genus $0$, the curves of $\mathbf{\widetilde{F}}$ in a fixed level  now have no common nodal points. Together with the fact that all the ends of all the  curves of $\mathbf{\widetilde{F}}$ are asymptotic to multiples of $\gamma_1$, this implies that
\begin{equation}
\label{indextilde}
\mathrm{index}(\mathbf{\widetilde{F}}) = 2(c_1(\mathbf{\widetilde{F}}) -3d)
\end{equation}
where $\mathrm{index}(\mathbf{\widetilde{F}})$
%
is the sum of the virtual indices of the curves of $\mathbf{\widetilde{F}}$ and the right hand side is
the index of an abstract finite energy plane with Chern number $c_1(\mathbf{\widetilde{F}})$ and a single negative end asymptotic to $\gamma_1^{(3d-1)}$. This fact will eventually yield the proof of Proposition \ref{compactF}. To make proper use of it though we must first define a special partition of the  curves of $\mathbf{ \widetilde{F}}$.

To proceed we first partition the curves of $\mathbf{ \widetilde{F}}$ with image in $SE_t$ into the smallest possible number  of disjoint subsets, $\mathbf{G}_1, \dots, \mathbf{G}_x$,
such that the compactifications of the curves in each $\mathbf{G}_k$ fit together to
produce a continuous map $\overline{\mathbf{G}}_k$ to $\partial E_t$ with a connected domain.
Since $\mathbf{\widetilde{F}}$ inherits a compacification from that of  ${\mathbf{ F}}$ this smallest partition is unique. We will denote the curves in each $\mathbf{G}_k$ by
$$
\mathbf{G}_k = \{ G^k_1, \dots, G^k_{n_k}\}.
$$

In what follows it will be useful to view each $\mathbf{G}_k$ as a curve itself by identifying the matching ends of its constituent curves, the $G^k_j$. From this perspective, we can consider the remaining {\it nonmatched} ends of the constituent curves as the  negative and positive ends of $\mathbf{G}_k$.

Let $\mathbf{G}_1$ be the subset containing the special curve $B$. It is the only one of the $\mathbf{ G}_k$ with a negative end. Suppose  $\mathbf{G}_1$
has $t_1$ positive ends with the $i^{th}$ one covering $\gamma_1$ exactly $a^1_i$ times. Setting
$$\mathrm{index}(\mathbf{G}_1) \eqdef  \sum_{j=1}^{n_1} \mathrm{index}(G^1_j)$$
we then have
\begin{eqnarray*}
\mathrm{index}(\mathbf{G}_1) & = & (n-3)(2-t_1-1) + \sum_{i=1}^{t_1}(2a^1_i +(n-1)) - (2(3d-1) +(n-1)) \\
{} & = & \sum_{i=1}^{t_1}2a^1_i -6d +2t_1.
\end{eqnarray*}
In particular, the contributions of the matching ends of the $G^1_j$ cancel.

\begin{lemma}
\label{G1}
The index of $\mathbf{G}_1$ is nonnegative and is zero if and only if $\mathbf{G}_1$ is a (stacked collection of)
cylinder(s)  over $\gamma_1^{(3d-1)}$.
\end{lemma}

\begin{proof}
Integrating $d \alpha_{E_t}$ over $\mathbf{ G}_1$ it follows from Stokes' Theorem that $$\sum_{i=1}^{t_1}a^1_i \ge 3d-1.$$ Together with the index formula above this implies
$\mathrm{index}(\mathbf{G}_1)\ge 0$. If $\mathrm{index}(\mathbf{G}_1) = 0$, then we must have $t_1=1$ and
$a^1_1 =3d-1$. It then follows from the definition of $\mathbf{G}_1$ and Lemma \ref{G} that that curves of $\mathbf{G}_1$ are all cylinders  over $\gamma_1^{(3d-1)}$ stacked end-to-end with $B$ at the lowest level.
\end{proof}

For the other subsets $\mathbf{G}_k$ with $k>1$ we have
\begin{eqnarray*}
\mathrm{index}(\mathbf{G}_k) & \eqdef & \sum_{j=1}^{n_k} \mathrm{index}(G^k_j) \\
{} & = & (n-3)(2-t_k) + \sum_{i=1}^{t_1}(2a^k_i +(n-1))  \\
{} & = & 2(n-1) +2t_k +2\sum_{i=1}^{t_1}a^k_i -4
\end{eqnarray*}
where $t_k$ is the number of positive ends of $\mathbf{ G}_k$ and the $i^{th}$ such end covers $\gamma_1$  $a^k_i$ times. In particular, we have the inequality
\begin{equation}
\label{Gk}
\mathrm{index}(\mathbf{G}_k)  \geq  2(n-3) + 4t_k.
\end{equation}

Now, let $F_1, \dots F_y$ be the curves of $\mathbf{\widetilde{F}}$ with image in  $(\widehat{M} \smallsetminus E_t)^{\infty}_-$.
The $t_1$ positive ends of $\mathbf{ G}_1$ determine a unique partition of
$$
\{\mathbf{ G}_2, \dots \mathbf{ G}_x, F_1, \dots, F_y\}
$$
into $t_1$ nonempty subsets $\mathbf{ H}_1, \dots, \mathbf{ H}_{t_1}$ as follows. Labeling  the positive ends of
$\mathbf{ G}_1$, one  defines $\mathbf{ H}_i$ to be the unique subset of $\{\mathbf{ G}_2, \dots \mathbf{ G}_x, F_1, \dots, F_y\}$ such that the compactifications of the constituent curves fit together to produce a map, $\mathbf{ \overline{H}}_i$, from  the unit disc to the closure of ${\widehat{M} \smallsetminus E_t}$ which can be matched  continuously with ${\mathbf{ \overline{G}}}_1$ along the boundary component of its domain that corresponds to the $i^{th}$ positive end of $\mathbf{ G}_1$.

As we did for the $\mathbf{ G}_k$, we define  $\mathrm{index}(\mathbf{ H}_i)$ to be the sum of the indices of the constituent curves of $\mathbf{ H}_i$.
\begin{lemma}
\label{H}
For each $i = 1, \dots t_1$ the index of $\mathbf{ H}_i$ is nonnegative and is zero
if and only if $\mathbf{ H}_i = \{F_l\}$ for some curve $F_l$ of $\mathbf{\widetilde{F}}$ with image in
$(\widehat{M} \smallsetminus E_t)^{\infty}_-$ and exactly one negative end.
\end{lemma}

\begin{proof}
Relabeling the curves we may assume for simplicity that
$$
\mathbf{ H}_i =\{\mathbf{ G}_2, \dots, \mathbf{ G}_{x_i}, F_1, \dots, F_{y_i} \}
$$
for some $x_i \le x$ and $y_i \le y$. Let $t_k$ be the number of positive ends of $\mathbf{ G}_k$
and let $s_l$ be the number of negative ends of $F_l$. The fact that the domain of $\mathbf{ \overline{H}}_i$ has exactly one boundary component and that this corresponds to a negative end,  implies
\begin{equation*}
\label{ }
\sum_{l=1}^{y_i} s_l-1 = \sum_{k=2}^{x_i} t_k.
\end{equation*}
If we let $\kappa \geq 0$ be this common value, then the fact that the domain of $\mathbf{ \overline{H}}_i$  is the unit disc yields the additional identity
\begin{equation*}
\label{ }
x_i+y_i-\kappa =1.
\end{equation*}
In particular, the graph whose vertices correspond to the curves of $\mathbf{ H}_i$ and whose edges
correspond to the matching ends of these curves (of which there are $\kappa$), has Euler characterstic equal to that of a point.

With these identities in hand, it now follows from Lemma \ref{lowindex} and inequality \eqref{Gk} that
\begin{eqnarray*}
\mathrm{index}(\mathbf{ H}_i) & = & \sum_{k=2}^{x_i}\mathrm{index}(\mathbf{ G}_k) + \sum_{l=1}^{y_i} \mathrm{index}(\mathbf{ F}_l)  \\
{} & \geq & 2x_i(n-3)  +4 \sum _{k=2}^{x_i} t_k + 2y_i(n-2) -2(n-2)\sum_{l=1}^{y_i}s_l  \\
{} & = & 2(n-3)(\kappa+1) +2y_i + 4 \kappa  -2(n-2)(\kappa +1)\\
{} & = & 2\kappa +2(y_i-1)\\
{} & \geq & 0.
\end{eqnarray*}
Moreover, equality holds if and only if $y_i=1$ and $\kappa =0$. In this case  $\mathbf{ H}_i$ consists of one curve  of $\mathbf{F}$ with image  $(\widehat{M} \smallsetminus E_t)^{\infty}_-$ (since $y_i=1$ and $x_i=0$) and this curve has exactly one negative end (since $\kappa= \sum s_l =1$).

\end{proof}

We can now complete the proof of Proposition \ref{compactt}. By definition, the disjoint subsets
$\mathbf{ G}_1, \mathbf{ H}_1, \dots \mathbf{ H}_{t_1}$ contain all the curves of $\mathbf{\widetilde{F}}$. By equation \eqref{indextilde}, we then have
\begin{equation}
\label{eqnfor35}
 \mathrm{index}(\mathbf{ G}_1) +  \mathrm{index}(\mathbf{ H}_1) + \dots +  \mathrm{index} (\mathbf{ H}_{t_1}) =  2(c_1(\mathbf{\widetilde{F}}) -3d) \leq0,
\end{equation}
as $c_1(\mathbf{\widetilde{F}}) \leq 3d$.
By Lemma \ref{G1} and Lemma \ref{H}, the summands on the left are all nonnegative. Hence they are all zero and $c_1(\mathbf{\widetilde{F}}) = 3d$. This last equality implies $\mathbf{\widetilde{F}}=\mathbf{F}$. Indeed, by Lemma \ref{properties} there can be no nontrivial curves of $\mathbf{F}$ with image in  $(M \smallsetminus E_t)^{\infty}_-$ which don't belong to $\mathbf{ \widetilde{F}}$. By Lemma \ref{G} any curves with image in $SE_t$ are equivalent under matching ends to a nontrivial curve with image in $(M \smallsetminus E_t)^{\infty}_-$. Finally, as $\mathbf{F}$ has genus $0$ it cannot contain trivial curves (ghost bubbles) which identify nodes of curves in $\mathbf{\widetilde{F}}$.

Since the indices on the left of equation \eqref{eqnfor35} are all zero, Lemma \ref{G1} and Lemma \ref{H} also imply that $t_1=1$, the subset $\mathbf{ G}_1$ is a stacked collection of trivial cylinders over $\gamma_1^{(3d-1)}$,  the subset $\mathbf{ H}_1$ consists of a single curve $F$ of $\mathbf{\widetilde{F}} = \mathbf{ F}$ with image  $(\widehat{M} \smallsetminus E_t)^{\infty}_-$, the curve $F$ has exactly one negative end,  and this end covers $\gamma_1$ precisely $3d-1$ times.  Since  $\mathrm{index}(F) = 0$, Lemma \ref{lowindex}  implies that $F$ is also simple and, with this, the proof of Proposition \ref{compactF}, and hence Proposition \ref{compactt} is complete.

\subsection{The space $\mathcal{K}_{1/S}$.} \label{threethree}

Here we prove the following result.

\begin{proposition}
\label{nonempty}
For sufficiently large $S>0$ and a generic almost-complex structure $J_{1/S}$ in $\mathcal{J}_{1/S,R}$, the corresponding moduli space $\mathcal{K}_{1/S}$ is an oriented,
compact, zero-dimensional manifold whose oriented cobordism class is nontrivial.
\end{proposition}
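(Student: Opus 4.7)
The plan is to exploit the product structure of $M$ near the embedding $\phi_{1/S}$, reducing $\mathcal{K}_{1/S}$ to the 4-dimensional situation of Theorem \ref{key}, and then invoking cobordism invariance. I would first choose $J_{1/S} = J_0 \oplus i_{\CC^{n-2}}$ with $J_0 \in \mathcal{J}_{E(1/S,1)}$ regular in the sense of Theorem \ref{key} and $i_{\CC^{n-2}}$ the standard complex structure on the second factor. Since $\phi_{1/S}(E_{1/S})$ is the standard ellipsoid in $\CC^n \subset M$, this split structure matches the Reeb flow on $\partial E_{1/S}$ and so lies in $\mathcal{J}_{1/S}$.

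For this $J_{1/S}$ and any $\tilde{u} \in \mathcal{K}_{1/S}$, projection gives holomorphic maps $u: \mathbb{D} \to \CC P^2$ and $v: \mathbb{D} \to \CC^{n-2}$, where $\mathbb{D}$ is the once-punctured sphere. Since $\gamma_1^{(3d-1)}$ lies in the $\CC P^2$-factor and projects to the origin of $\CC^{n-2}$, removable singularities force $v$ to extend to $S^2$ as the constant map $0$. Thus $\tilde{u} = (u, 0)$ where $u$ is a plane of degree $d$ in $(\CC P^2 \smallsetminus E(1/S,1))^{\infty}_-$ asymptotic to $\gamma_1^{(3d-1)}$; Theorem \ref{key} provides such a $u$, so this gives an explicit bijection between $\mathcal{K}_{1/S}$ and the corresponding 4-dimensional moduli space $\mathcal{K}^{4D}$, which is nonempty. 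Regularity of $(u,0)$ follows by splitting the linearized Cauchy-Riemann operator: the 4D factor is surjective by Theorem \ref{key}, and the normal operator on sections of the trivial bundle $u^* T\CC^{n-2}$ has asymptote of Conley-Zehnder index $1$ per complex line (since $\mu(\gamma_1^{(3d-1)}) = 2(3d-1) + (n-1)$ splits as $(2(3d-1)+1) + (n-2)$ for $S$ large), giving Fredholm index $0$ and surjectivity by standard asymptotic analysis.

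Compactness of $\mathcal{K}_{1/S}$ for this split $J_{1/S}$ reduces to compactness of $\mathcal{K}^{4D}$, which follows from the SFT compactness theorem of \cite{BEHWZ} together with the index and action arguments of Section \ref{restrictions}: the bound $S > \sqrt{3d-1}$ forces all negative ends to cover only $\gamma_1$, total index zero forces every component of a limiting building to be rigid, and a covering argument excludes any bubblings that would produce extra planes. For nearby regular perturbations needed to construct a cobordism to an arbitrary regular $J_{1/S}$, boundedness of the $\CC^{n-2}$-projection is arranged by taking $J$ to be suitably standard at infinity in $\CC^{n-2}$, and the same analysis applies. Orientability is standard from the determinant line bundle of the Cauchy-Riemann operator, and splits compatibly under the product structure.

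Finally, the signed count of $\mathcal{K}_{1/S}$ for the split structure agrees, up to an overall sign coming from the determinant of the normal Cauchy-Riemann operator, with that of $\mathcal{K}^{4D}$. I would compute the latter by a further neck-stretching along $\partial E(1/S,1)$ within $\CC P^2$ to match it with the signed count of degree $d$ holomorphic spheres in $\CC P^2$ through $3d-1$ generic points, which equals $n_d \neq 0$ by Proposition \ref{number}. The main obstacle I anticipate lies here: one needs a signed refinement of Lemma \ref{limit}, showing that among the limits of the $n_d$ spheres the contributions of buildings whose top-level component has several negative ends (rather than a single $(3d-1)$-fold cover of $\gamma_1$) cancel in pairs via an auxiliary cobordism among such ``partitioned'' moduli spaces, so that the net contribution to $\mathcal{K}^{4D}$ is exactly $\pm n_d$. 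Once this signed matching is verified, cobordism invariance propagates the nontriviality to every regular $J_{1/S} \in \mathcal{J}_{1/S}$.
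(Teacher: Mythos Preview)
Your overall strategy---reduce to the four-dimensional slice and invoke Theorem~\ref{key}---is the same as the paper's, but the execution diverges at two places, one of which is a genuine gap.

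The gap is in your argument for nontriviality of the signed count. You try to identify the count with $\pm n_d$ by a further neck-stretching and then hope that contributions from limiting buildings with multiple negative ends cancel; you yourself flag this as the main obstacle, and indeed nothing in the paper supplies such a cancellation. The paper avoids this entirely with a much simpler observation: by Proposition~\ref{reg} the normal bundle of a curve in the slice splits as a sum of holomorphic line bundles, and then Wendl's automatic transversality (\cite{wendl}, Theorem~1) applies factor by factor. Automatic regularity means there are no irregular linear operators along any path between two curves, hence no crossing numbers, and \emph{all} curves in $\mathcal{K}_{1/S}$ carry the same orientation. Thus nonemptiness (which you and the paper both get from Theorem~\ref{key}) already gives a nonzero count. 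No comparison with $n_d$ is needed.

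There are also some softer technical issues. First, a genuinely split $J_0 \oplus i$ is not obviously compatible with $E_{1/S}=E(1/S,1,\dots,1)$: the boundary $\partial E_{1/S}$ meets regions where $(z_1,z_2)$ lies in the \emph{interior} of $E(1/S,1)$, so compatibility with $E(1/S,1)$ in $\CC P^2$ does not pin down $J_0$ there. The paper instead works with the larger class $\bar{\mathcal J}_{1/S}$ of $\mathbb{T}^{n-2}$-invariant structures and never needs a global splitting. Second, your ``projection to $\CC^{n-2}$ plus removable singularities'' argument for forcing curves into the slice is delicate because the negative symplectic completion $(M\smallsetminus E_{1/S})^{\infty}_-$ is not a product near the cylindrical end. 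The paper's mechanism is different: for a \emph{regular} invariant $J$, any curve not lying in the slice would sit in a positive-dimensional $\mathbb T^{n-2}$-orbit of curves, contradicting $\dim\mathcal K_{1/S}=0$. The paper then shows, via a second neck-stretching along a nearby ellipsoid $\Sigma_a$, that one can actually find a regular $J\in\bar{\mathcal J}_{1/S}$ (the orbitally-simple argument). Finally, your regularity claim for the normal Cauchy--Riemann operator (``Fredholm index $0$ and surjectivity by standard asymptotic analysis'') is not a proof; index zero does not give surjectivity for free, which is precisely why the paper appeals to \cite{wendl}.
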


Much of this has already been established. We have already shown that the virtual dimension of $\mathcal{K}_{1/S}$ is $0$. For  a generic $J_{1/S}$ in $\mathcal{J}_{1/S,R}$, $\mathcal{K}_{1/S}$ is then a zero-dimensional manifold which can be oriented as in \cite{bormon}, see also \cite{EGH}. The compactness  of $\mathcal{K}_{1/S}$ follows as in Proposition \ref{compactt}.  It just remains to verify the nontriviality of the oriented cobordism class.

By our compactness result, Proposition \ref{compactt}, its oriented cobordism class is independent of  the choice of a generic  almost-complex structure in $\mathcal{J}_{1/S,R}$. So, it suffices to show that this class is nontrivial for a specific
regular almost-complex structure in $\mathcal{J}_{1/S,R}$. By our choice of $\phi_{1/S}$ the manifold $(\widehat{M} \smallsetminus E_{1/S})^{\infty}_-$ inherits the $\mathbb{T}^{n-2}$--action from $\widehat{M} =\CC P^4 (R) \times (\CC P^1(2T))^{n-2}$.  We will restrict our attention to
the subset $\bar{\mathcal{J}}_{1/S,R}$ of  $\mathcal{J}_{1/S,R}$ consisting of  almost-complex structures  which are invariant under this action.  Note that for  $J_{1/S} \in  \bar{\mathcal{J}}_{1/S,R}$ the submanifold $$( (\CC P^2(R) \smallsetminus E(1/S, 1)) \times \{0\})^{\infty}_-  \subset (\widehat{M} \smallsetminus E_{1/S})^{\infty}_-$$ is $J_{1/S}$-holomorphic. We
 say that  $J_{1/S} \in \bar{\mathcal{J}}_{1/S,R}$ is \emph{suitably restricted} if its restriction to $( (\CC P^2(R) \smallsetminus E(1/S, 1)) \times \{0\})^{\infty}_-  $ renders $\mathbb{C}P^1(\infty) \times \{0\}$ holomorphic, and is regular for all somewhere injective, finite energy curves of genus zero in $( (\CC P^2(R) \smallsetminus E(1/S, 1)) \times \{0\})^{\infty}_-  $.

The following two results will immediately imply Proposition \ref{nonempty}.

\begin{proposition}
\label{nontrivial}
If $J_{1/S} \in  \bar{\mathcal{J}}_{1/S,R}$ is regular for $\mathcal{K}_{1/S}$ and is suitably restricted, then the cobordism class of $\mathcal{K}_{1/S}$ is nontrivial.
\end{proposition}

\begin{proposition}
\label{regular}
There exists a $J_{1/S} \in  \bar{\mathcal{J}}_{1/S,R}$ which is regular for $\mathcal{K}_{1/S}$  and is suitably restricted.
\end{proposition}

The condition that $\bar{\mathcal{J}}_{1/S,R}$ is regular for $\mathcal{K}_{1/S}$ of course must also imply that curves appearing in holomorphic buildings in the boundary of $\mathcal{K}_{1/S}$ are also regular (so that Proposition \ref{compactt} ensures compactness).

\subsubsection{Proof of Proposition \ref{nontrivial}.}
We first note  that since $J_{1/S}$ is  suitably restricted the space $\mathcal{K}_{1/S}$ is nonempty. In particular,  Theorem \ref{key} yields  a curve with image in
$$
( (\CC P^2(R) \smallsetminus E(1/S, 1)) \times \{0\})^{\infty}_-  \subset (\widehat{M} \smallsetminus E_{1/S})^{\infty}_-
$$
that represents a class in $\mathcal{K}_{1/S}$, (see Remark \ref{up-reg}). It now suffices to show that all curves representing a
class in $\mathcal{K}_{1/S}$ have the same orientation.

Let $F$ be a curve representing a class in $\mathcal{K}_{1/S}$.  Since  $J_{1/S} \in \bar{\mathcal{J}}_{1/S,R}$ is regular for $\mathcal{K}_{1/S}$, the image of $F$ must be  contained in $( (\CC P^2(R) \smallsetminus E(1/S, 1)) \times \{0\})^{\infty}_- .$ For, if not,   the $\mathbb{T}^{n-2}$-action would produce a family of curves in $\mathcal{K}_{1/S}$, including $F$, of dimension at least $n-2 \ge 1$. As  $\mathrm{index}(F)=0$ this would contradict the regularity of $J_{1/S}$.

As curves $F$ in $\mathcal{K}_{1/S}$ are somewhere injective and of index $0$, they must be immersed, see \cite{wendl}, Corollary $3.17$. Now let $\nu$ be the subbundle of $F^*(T(\widehat{M} \smallsetminus E_{1/S})^{\infty}_- )$ corresponding to the normal bundle of the image of $F$. Denote the almost-complex structure induced by $J_{1/S}$ on the total space of $\nu$ by the same symbol, that is, $J_{1/S}$ is the almost-complex structure such that infinitesimal deformations of $F$ correspond to holomorphic sections of $\nu$.

\begin{lemma}
\label{reg}
The  bundle $\nu$  splits as a sum
of almost-holomorphic line bundles, that is, complex subbundles whose total spaces are invariant under the action of $J_{1/S}$. In particular, $\nu = H \oplus V_3 \oplus \ldots \oplus
V_n,$ where $H$ is the subbundle of normal vectors parallel to $\{z_j=0\mid
j =3, \dots n\}$ and $V_j$ is the subbundle of normal vectors parallel to
the $z_j$ factor.
\end{lemma}

\begin{proof}

It is clear that $H$ is an almost-holomorphic subbundle. It suffices to show that each $V_j$ is too.
Fixing a $p =F(z)$ we see that since $J_{1/S}$ belongs to $\bar{\mathcal{J}}_{1/S}(T_1)$, the space $J_{1/S}(p)(V_j(z))$ is an $S^1$-invariant, $2$-dimensional subspace of $T_p((M \smallsetminus E_{1/S})^{\infty}_-)$ which is transverse to $H(z) \oplus_{i \neq j} V_i(z)$. Here the $S^1$-action is the derivative of rotations in the $z_j$ plane acting on the tangent space at the fixed point $p$, and transversality follows because the action is by isomorphisms. Since the only such $2$-dimensional subspace is $V_j(z)$ itself we see that $V_j(z)$ is a complex subspace of $T_p((\widehat{M} \smallsetminus E_{1/S})^{\infty}_-)$ and hence the vector bundle $V_j$ has complex fibers. To see that the bundle is almost-holomorphic, we choose a connection on $V_j$ whose horizontal subspaces are all
invariant under our $S^1$ action of rotations in the $z_j$ plane. Then let $v$ be the horizontal lift of a vector in the image of $dF_{z}$. We can think of $v$ as a vector field tangent to the total space of $V_j$ defined along the fiber $V_j(p)$. Then $J_{1/S}(v)$ is a vector field tangent to the total space of $\nu$ defined along $V_j(p)$. Using the connection we can project $J_{1/S}(v)$ to the fibers of $\nu$ and then project along $V_j$ to $H \oplus_{i \neq j} V_i$. This gives a linear map $L_v$ from $V_j(p)$ to $H \oplus_{i \neq j} V_i(p)$, and since $J_{1/S}$ is invariant under rotations the map takes points on the same $S^1$-orbit to the same image. By linearity, this forces the map to be identically zero. As the horizontal planes of the connection are tangent to the total space of $V_j$ we conclude that $J_{1/S}(v)$ is also tangent to $V_j$ and it follows that $V_j$ is an almost-holomorphic subbundle as required.

\end{proof}

At this point we can make a key observation of the proof, that a version of automatic regularity can be applied in our present setting. Let $F$ be any  curve which  represents a class in $\mathcal{K}_{1/S}$. By  Lemma \ref{reg}, the linearized Cauchy-Riemann operator along $F$ (giving infinitesimal deformations of the finite energy plane) splits. This allows us to apply the (four-dimensional) automatic regularity results of \cite{wendl}. (The curve $F$ satisfies the hypotheses of Theorem 1 of  \cite{wendl}  since the normal first Chern number term, $c_N$, is negative in our case.) In particular,  no factor of the operator can have a nontrivial cokernel, and thus the Cauchy-Riemann operator itself is surjective.

To complete the proof of Proposition \ref{nontrivial} let us now suppose that there are at least two distinct classes in the zero dimensional moduli space $\mathcal{K}_{1/S}$. To determine the difference in orientation of these classes  we fix representative curves, $F$ and $F'$, identify their normal bundles, and choose a family of linear Cauchy-Riemann operators  interpolating between the induced operators
for $F$ and $F'$. The difference  in  orientations is then given by a sum of crossing numbers evaluated at parameter values for which the associated Cauchy-Riemann operators have nontrivial cokernel (see, for example, \cite{msa} Remark 3.2.5). But our version of  automatic regularity here means that, provided we choose our interpolation to preserve the splitting, there are no such  singular parameter values. Hence,  all classes in  $\mathcal{K}_{1/S}$ have the same orientation.

\subsubsection{Proof of Proposition \ref{regular}.}

For a fixed $J_{1/S} \in \bar{\mathcal{J}}_{1/S,R}$, a finite energy curve in $(\widehat{M} \smallsetminus E_{1/S})^{\infty}_-$ will be called {\it orbitally simple} if it intersects at least one
orbit of the $\mathbb{T}^{n-2}$--action exactly once, and furthermore the tangent space to the curve and the tangent space to the orbit together span a subspace of maximal dimension, namely $n$. There is  a subset of $ \bar{\mathcal{J}}_{1/S,R}$ of second category which consists of suitably restricted almost-complex structures for which all orbitally simple curves  are regular. This
follows from the standard methods, exactly as in, say, Section $3.2$ of \cite{msa}. Here, the condition of orbital simplicity replaces the assumption in \cite{msa} that all curves are simple. In particular, the analogue of Proposition $3.2.1$  of \cite{msa} allows one  to construct  sections of bundles over $(\widehat{M }\smallsetminus E_{1/S})^{\infty}_-$ which are both $\mathbb{T}^{n-2}$--invariant and, when restricted to the image of a given orbitally simple curve, have support contained in the neighborhood of a single point.

Thus, to detect the desired almost-complex structure $J_{1/S}  \in \bar{\mathcal{J}}_{1/S,R}$ it will suffice to find an open subset of $ \bar{\mathcal{J}}_{1/S,R}$ such that every curve for the corresponding spaces $\mathcal{K}_{1/S}$ is either contained in $( (\CC P^2(R) \smallsetminus E(1/S, 1)) \times \{0\})^{\infty}_- $ or is orbitally simple. In the case of a building in the boundary of $\mathcal{K}_{1/S}$ this should apply to every curve in the building mapping to $(\widehat{M }\smallsetminus E_{1/S})^{\infty}_-$ (then Proposition \ref{compactt} ensures compactness of $\mathcal{K}_{1/S}$). In fact, as orbital simplicity is an open  property  (by compactness), it will suffice to construct a single such almost-complex structure. Hence, Proposition \ref{regular} will be implied by the following result.

\begin{proposition}\label{ex}
 There exists a suitably restricted $J_{1/S}  \in \bar{\mathcal{J}}_{1/S,R}$ such that all curves (or buildings) which represent classes in $\mathcal{K}_{1/S}$ and which do not lie entirely in $( (\CC P^2(R) \smallsetminus E(1/S, 1)) \times \{0\})^{\infty}_-$ must intersect some orbit of the $\mathbb{T}^{n-2}$--action exactly once, and at the given intersection point the tangent space to the curve and the tangent space to the orbit span an $n$-dimensional subspace.
\end{proposition}

\noindent{\bf Proof of Proposition \ref{ex}.} For a real number $a$ slightly larger than $1$, let $\Upsilon_a$ denote the hypersurface $\{S^2|z_1|^2 + |z_2|^2 =a^2\} \subset (\widehat{M} \smallsetminus E_{1/S})^{\infty}_-$ which divides $(\widehat{M} \smallsetminus E_{1/S})^{\infty}_-$ into two regions; $V$ which contains the cylindrical concave end corresponding to $\partial E_{1/S}$, and $W = \{S^2|z_1|^2 + |z_2|^2 >a^2\}$. The hypersurface $\Upsilon_a$ is not of contact type, although its intersections with the levels $\{z_j=c_j | j=3,\dots ,n\}$ for $(c_3,\dots ,c_n) \in (\CC P^1(2T))^{n-2}$, are. The hypersurface $\Upsilon_a$ is {\it stable} in the sense that the symplectic form $\omega_R$ when restricted to $\Upsilon_a$ gives a {\it stable Hamiltonian structure}, for this see for example the discussion at the start of section $2$ of the paper \cite{ciemon}. Indeed, let $\lambda$ be the pull-back to $\Upsilon_a$ of the standard Liouville form under the projection from $\Upsilon_a$ into $B^4(R) \subset \CC P^2(R)$. We observe that $L=\text{ker}(\omega_R |_{\Upsilon_a})$ is $1$-dimensional (it is known as the Hamiltonian line field) and contained in $\text{ker} d \lambda$. Moreover, $\lambda |_L \neq 0$. Given this, the section $v$ of the Hamiltonian line field determined by $\lambda(v) \equiv 1$ is called the Reeb vector field corresponding to the stable Hamiltonian structure. In our case, $v$ preserves the levels $\{z_j=c_j | j=3,\dots ,n\}$ and restricted to each level is the Reeb vector field corresponding to the contact form there.
Observe then that the  flow of the Reeb vector field $v$ on $\Upsilon_a$ contains precisely two $2(n-2)$ parameter families of closed Reeb orbits, one family  of period $\pi a^2/S^2$ corresponding to translations of $a\gamma_1$ and another  {\it long} family  with period $\pi a^2$.

Given the stable Hamiltonian structure on $\Upsilon_a$ described above, we can define compatible almost-complex structures as in Section $2.2$ of \cite{BEHWZ} or section $2.5$ of \cite{ciemon}, see also Section \ref{split} above, and can invoke the compactness theorem of \cite{BEHWZ} (or \cite{ciemon}) as we split  $(\widehat{M} \smallsetminus E_{1/S})^{\infty}_-$ along $\Upsilon_a$. (The compactness theorem is valid provided we split along a stable hypersurface.) Indeed, we will need to consider this splitting to find the almost-complex structure in Proposition \ref{ex}. But first we must start with a rather special almost-complex structure on $(\widehat{M} \smallsetminus E_{1/S})^{\infty}_-$.

\begin{lemma}
There is a  $J$ in  $\bar{\mathcal{J}}_{1/S,R}$ which is suitably restricted, compatible with $\Upsilon_a$ and such that $\CC P^1(\infty) \times (\CC P^1(2T))^{n-2}$ is holomorphic, and the projection of $W$ onto $\{z_j=0 \mid j=3, \dots, n\}$ is holomorphic  with respect to the almost-complex structure induced by $J$. Furthermore the same holds for the almost-complex structures $J^N$ stretched to length $N \in \Bbb N$ along $\Upsilon_a$.
\end{lemma}

Here, $\CC P^1(\infty) \subset \CC P^2(R)$ denotes the line at infinity, and we use the fact that $W$ can be viewed as a subset of $\widehat{M}$  to which $J$ can be restricted and on which the projection to $\{z_j=0 \mid j=3, \dots, n\}$ is defined.

\begin{proof} Before describing how such a $J$ can be constructed we first enumerate its required properties beginning with the three which are implied by the restriction that it belong to  $\bar{\mathcal{J}}_{1/S,R}$.
\begin{enumerate}
  \item[(i)] $(\widehat{M} \smallsetminus E_{1/S})^{\infty}_-$ equipped with $J$ is an almost-complex manifold with a cylindrical end.
  \item[(ii)] $J$ is in ${\mathcal{J}}_{1/S,R}$,  that is, every connected finite energy $J$-holomorphic (cusp) curve in $(\widehat{M}(T) \smallsetminus E_{1/S})^{\infty}_-$  with at least one asymptotic end and area bounded by $d\pi R^2$ has image contained in the interior of $(U(T))^c$.
  \item[(iii)] $J$ is preserved by the  action $\mathbb{T}^{n-2}$ of  on $(\widehat{M} \smallsetminus E_{1/S})^{\infty}_-$.
  \item[(iv)] $J$ is compatible with $\Upsilon_a$.
  \item[(v)] $\CC P^1(\infty) \times (\CC P^1(2T))^{n-2}$ is $J$-holomorphic.
  \item[(vi)] $J$ is suitably restricted.
  \item[(vii)] The projection of $W$ onto $\{z_j=0 \mid j=3, \dots, n\}$ is $J$-holomorphic.
\end{enumerate}

Let $J_R$ be an almost-complex structure on $\C P^2(R)$ such that
\begin{enumerate}
  \item[(R1)] $J_R$ equals the standard complex structure on $E(1/S,1)$.
  \item[(R2)] $J_R$ is compatible with the hypersurface $\{S^2|z_1|^2 + |z_2|^2 =a^2\}$.
  \item[(R3)] The line at infinity, $\C P^1(\infty)$ , is $J_R$-holomorphic.
  \item[(R4)] The restriction of $J_R$ to $\C P^2(R) \smallsetminus E(1/S,1)$ induces an almost complex structure on $(\C P^2(R) \smallsetminus E(1/S,1))^{\infty}_-$ which is regular for somewhere injective, finite energy curves of genus zero.
\end{enumerate}
Then if $J_T$ is the standard complex structure on $(\C P^1(2T))^{n-2}$ the almost-complex structure $J=J_R \oplus J_T$ on $\widehat{M} = \C P^2(R) \times (\C P^1(2T))^{n-2}$ has a restriction to $\widehat{M} \smallsetminus E_{1/S}$ which is compatible with the boundary and so defines an almost-complex structure on $(\widehat{M} \smallsetminus E_{1/S})^{\infty}_-$ satisfying $(\mathrm{i})$. It follows from the argument of Lemma \ref{diamond}, that property $(\mathrm{ii})$ is also satisfied, as is $(\mathrm{iii})$ since $\partial E_{1/S}$ is itself $\mathbb{T}^{n-2}$ invariant. Properties $(\mathrm{iv})$  and $(\mathrm{v})$ are  implied by $(\mathrm{R}2)$ and $(\mathrm{R}3)$, respectively. Property $(\mathrm{vi})$ follows from condition $(\mathrm{R}4)$, and property $(\mathrm{vii})$ follows simply from the fact that $W$ is a product manifold and $J$ is split.

Finally we observe that if $J^N_R$ denotes $J_R$ stretched to length $N$ along $\{S^2|z_1|^2 + |z_2|^2 =a^2\}$ then $J^N = J^N_R \oplus J_T$ is the result of stretching $J$ to length $N$ along $\Upsilon_a$. Hence if we assume that $J^N_R$ also induces regular almost-complex structures on $(\C P^2(R) \smallsetminus E(1/S,1))^{\infty}_-$ we have constructed a $J$ as required.
\end{proof}

Fix a $J$ as in the previous lemma and as in the lemma denote by $J^N$ the corresponding sequence of almost-complex structures starting with $J$ which are stretched to a length $N$ along $\Upsilon_a$.
Letting $N \to \infty$, for the limiting almost-complex structure the submanifold  $X^{\infty}_- = W^{\infty}_- \cap \{z_j=0 \mid j=3, \dots, n\}$ will remain complex and is a manifold with cylindrical end (it can be written $(\CC P^2(R) \smallsetminus E(a/S, a))^{\infty}_-$) in its own right; the projection $\pi: W^{\infty}_- \to X^{\infty}_-$ will be holomorphic.

We now show that one of these $J^N$ will satisfy the requirements of Proposition \ref{ex}.

Arguing by contradiction, let us assume that for all such almost-complex structures $J^N$ there exist curves $F_N$ which represent classes in $\mathcal{K}_{1/S}$ (or possibly holomorphic buildings in the boundary) and  intersect the $\mathbb{T}^{n-2}$--orbits in multiple points, or not at all (or in the building case this is true for at least one component). Passing to a subsequence if necessary, we may assume by  \cite{BEHWZ}  that  these curves $F_N$ converge to a holomorphic building $\mathbf{F}$.

Let $G_k$ for $1 \le k \le K$ denote the components of $\mathbf{F}$ in $W^{\infty}_-$. As $\pi$ is holomorphic the projections $H_k = \pi \circ G_k$ are also finite energy curves in $X^{\infty}_-$. The idea of the proof is that although we cannot assume any regularity properties for the almost-complex structure on $W^{\infty}_-$ (as it is carefully chosen to be invariant under $\mathbb{T}^{n-2}$ and have a holomorphic projection) we are assuming regularity for $X^{\infty}_-$ and so can conclude index inequalities for the $H_i$.

The key result will be the following.

\begin{lemma} \label{ending}
All ends of all $G_k$ are asymptotic to translations of covers of $a \gamma_1$. Counting with multiplicity, these ends cover $a \gamma_1$ a total of at most $3d-2$ times.
\end{lemma}

\begin{proof}

Let $e_k$ be the Chern class of $H_k$ (which is the same as that of $G_k$). As the $F_N$ have Chern class $3d$ we deduce that $\sum_{k=1}^K e_k = 3d$. Therefore by Lemma \ref{new2} all ends of the $H_k$ are asymptotic to multiples of $a \gamma_1$, and hence the same is also true for the $G_k$.

Suppose that $G_k$ (and hence also $H_k$) has $s_k^-$ negative ends with the $i^{th}$ such end covering $a\gamma_1$ a total of  $c_{k,i}$ times. Using formula \eqref{indexf}, the deformation index of $H_k$ is
\begin{eqnarray*}
\mathrm{index}(H_k) & = & -2 + 2e_k - 2\sum_{i=1}^{s_k^-}c_{k,i}.
\end{eqnarray*}
By Lemma \ref{new2} this is nonnegative (and in fact is strictly positive if $H_k$ is a multiple cover), and so for $k=1, \dots K$, we have
\begin{equation}
\label{gk}
\sum_{i=1}^{s_k^-}c_{k,i} \le e_k-1.
\end{equation}

By summing inequality \eqref{gk} over $k$ we see immediately that if $K \ge 2$ then since $\sum_{k=1}^K e_k = 3d$ the total number of ends is at most $3d-2$ as required.

Suppose then that $K=1$. Then $H_1$ is necessarily a multiply covered curve. Indeed, the number of preimages of a point $z$ in the image of $H_1$ is at least the number of times $G_1$ intersects the fiber of $\pi$ over $z$. This number can be $1$ only if $G_1$ intersects the fiber in a single point and the fiber and the tangent space to $G_1$ span a subspace of maximal dimension. But if this were the case then the same would be true for the intersection of the $F_N$ with some nearby fibers when $N$ is very large, contradicting our hypothesis on the $F_N$.
\end{proof}

There is a unique curve of $\mathbf{F}$ at the lowest level (and hence with image not in $W^{\infty}_-$) which has a negative end, and this negative end is asymptotic to $\gamma_1^{(3d-1)}$.
Moreover,  $\mathbf{F}$ must include a curve in the symplectic completion of $V$ and this curve must have strictly positive area. Hence, it follows from Stokes' Theorem, applied here to the curves of $\mathbf{F}$ not in  $W^{\infty}_-$,  that for an $a$ sufficiently close to $1$ the negative ends of the curves of $\mathbf{F}$ in $W^{\infty}_-$ cover the corresponding translations of $a\gamma_1$ a total of at least $3d-1$ times. This is the desired contradiction to Lemma \ref{ending}.

\subsection{The completion of the proof}

 Theorem \ref{thm2} now follows almost immediately from Propositions \ref{compactt} and \ref{nonempty}. For any $S>0$ with $S^2 \in \mathbb{R} \smallsetminus \mathbb{Q}$,  and any positive integer $d$ satisfying $3d < S^2$, it follows from Proposition \ref{compactt} that for a generic family $J_t$  the space $\mathcal{K} = \{ \mathcal{K}_t \mid t \in [1/S, 1]\}$ is  a compact,  oriented, $1$-dimensional manifold whose boundary is $\mathcal{K}_{1/S} \amalg \mathcal{K}_1$. Proposition \ref{nonempty}  implies  that the moduli space $\mathcal{K}_{1/S}$ represents a nontrivial cobordism class, and so $\mathcal{K}_1$ is also nonempty. Hence, there exists  a holomorphic plane in $(\widehat{M} \smallsetminus E_1)^{\infty}_-$
whose negative end is asymptotic to $\gamma_1^{(3d-1)}$. The symplectic area of this curve is positive and equal to  $d\pi R^2 - (3d-1)\pi$. Thus, $R^2 >
\frac{3d-1}{d}$, and taking the limit as $d$ (and hence $S$) goes to  $\infty$ we have
$R^2 \ge 3$.

\section{Symplectic embeddings}

In this section we prove Theorems \ref{embed2} and \ref{embed}.

\subsection{The proof of Theorem \ref{embed}}

Let $\Sigma(\epsilon)$ be a punctured torus, i.e., a surface of genus one with one boundary component,
equipped with a symplectic form of total area $\epsilon$.
The Main Lemma of \cite{guth} implies the following.

\begin{proposition} For any $S, \epsilon>0$, there exists a symplectic embedding of
$B^{2(n-1)}(S)$ into $\Sigma(\epsilon) \times \RR^{2(n-2)}$.
\end{proposition}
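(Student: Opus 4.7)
The plan is to deduce this directly from Guth's Main Lemma by recognizing that the essential feature of $\Sigma(\epsilon)$ used in the embedding construction is not its total area (which is small) but the fact that its fundamental group is free of rank two. Consequently, $\Sigma(\epsilon)$ contains embedded arcs of arbitrary length, so arbitrarily long thin strips of area at most $\epsilon$ embed symplectically into $\Sigma(\epsilon)$.

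First I would reduce the ball $B^{2(n-1)}(S)$ to a polydisc $P(a,S',\dots,S')$ with $a$ much smaller than $\epsilon$ and $S'$ large. Such a reduction is standard: a ball of radius $S$ symplectically embeds into a polydisc with one short side at the cost of making the other sides long. It therefore suffices to produce a symplectic embedding of $P(a,S',\dots,S')$ into $\Sigma(\epsilon)\times\RR^{2(n-2)}$.

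Next I would slice the $(z_2,\dots,z_n)$-factor $P(S',\dots,S')$ into a large finite collection of thin rectangular slabs, and assign to each slab a different embedded path in $\Sigma(\epsilon)$, routed along distinct elements of $\pi_1(\Sigma(\epsilon))$. The $z_1$-disc of area $a<\epsilon$ is carried along each path as a thin symplectic strip inside $\Sigma(\epsilon)$. The slabs are separated in the $\RR^{2(n-2)}$-factor by translations, using symplectic shears whose displacement in $\RR^{2(n-2)}$ is controlled by a primitive of the area form along the winding path in $\Sigma(\epsilon)$. Iterating and interleaving these shears along the two independent generators of $\pi_1(\Sigma(\epsilon))$ produces disjoint images for distinct slabs. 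This is exactly the winding/shearing construction that Guth's Main Lemma implements.

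The main obstacle will be the gluing: one must ensure that the piecewise construction extends to a globally defined smooth symplectomorphism, which requires the shears to interpolate compatibly between neighboring slabs and across the transitions in $\Sigma(\epsilon)$ where the winding class changes. The puncture in $\Sigma(\epsilon)$ is essential here, providing room to smoothly redirect the strips between distinct homotopy classes. In this respect $\Sigma(\epsilon)$ is more permissive than the surface used in Guth's original construction, so once the strip decomposition is set up, no technical difficulty arises beyond those already resolved in \cite{guth}, and the Main Lemma applies.
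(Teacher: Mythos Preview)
The paper does not actually prove this proposition: it simply states that it is implied by the Main Lemma of \cite{guth} and moves on. So there is no detailed argument in the paper to compare against; both you and the paper ultimately defer to Guth.

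That said, your sketch contains a genuine error in its first step. You write that ``a ball of radius $S$ symplectically embeds into a polydisc with one short side at the cost of making the other sides long,'' and propose to reduce $B^{2(n-1)}(S)$ to $P(a,S',\dots,S')$ with $a\ll\epsilon$. This is precisely what Gromov's nonsqueezing theorem forbids: since $P(a,S',\dots,S')\subset B^2(a)\times\RR^{2(n-2)}$, there is no symplectic embedding of $B^{2(n-1)}(S)$ into it once $a<S$, regardless of how large $S'$ is. So the reduction you call ``standard'' does not exist.

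Guth's construction runs the other way. One first includes $B^{2(n-1)}(S)$ into the equilateral polydisc $P(S,\dots,S)$, whose first factor $B^2(S)$ is \emph{large}. That large disc is then symplectically \emph{immersed} (not embedded) into the small surface $\Sigma(\epsilon)$ as a long thin strip winding many times around the handles; the resulting self-intersections are displaced in the $\RR^{2(n-2)}$ directions by Hamiltonian shears, iterated over the remaining factors. The later part of your sketch---the slicing, winding, and shearing---is in the right spirit, but it applies to the large first factor directly, not after a preliminary squeezing step that cannot be carried out.
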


To establish Theorem \ref{embed} it suffices to find an embedding
of $\Sigma(\epsilon) \times B^2(1)$ into $B^2(\sqrt{R}) \times B^2(\sqrt{R})$, where $R>2$ is fixed and $\epsilon$ can
be arbitrarily small. The existence of such an embedding is essentially contained in
Lemma $3.1$ of \cite{guth}. We review the construction here for the sake of
completeness and because it will play an important role in the proof of Theorem \ref{embed2}.

Fix $R = 2 + 2\delta$ for $\delta>0$. We begin with the following elementary result.
\begin{lemma}
\label{push}
For every $\delta >0$ there is  a nonnegative function $H$ whose support is contained in $B^2(\sqrt{2+2\delta})$, whose maximum value is less than $\pi + \delta$, and whose time-$t$ Hamiltonian flow,
$\phi^t_H$, satisfies $$\phi^t_H(B^2(1)) \subset B^2(\sqrt{(1+t)(1 + \delta/\pi)})$$ and $$\phi^1_H(B^2(1))\cap B^2(1) = \emptyset.$$
\end{lemma}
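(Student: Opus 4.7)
\medskip
\noindent\textbf{Proof plan for Lemma~\ref{push}.} The plan is to construct $H$ explicitly, using the classical relationship between the period of the Hamiltonian flow on a level set and the area it encloses. I would first pass to action-angle coordinates $(I,\theta) = (|z|^2/2, \arg z)$ on $\R^2 \setminus \{0\}$, in which $\omega_0 = dI \wedge d\theta$ and $B^2(r) = \{I \le r^2/2\}$. Then $B^2(1)$ becomes the strip $\{I \le 1/2\}$, $B^2(\sqrt{2+2\delta})$ becomes $\{I \le 1+\delta\}$, and the containment condition becomes $\phi^t_H(\{I \le 1/2\}) \subset \{I \le (1+t)(\pi+\delta)/(2\pi)\}$. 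A first observation is that any Hamiltonian depending only on $I$ preserves each sublevel $\{I \le c\}$ and therefore cannot displace $B^2(1)$; hence $H$ must genuinely depend on $\theta$.

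Next, I would fix a base point $p$ on $\partial B^2(1)$ and design $H$ to be smooth, nonnegative, supported in $B^2(\sqrt{2+2\delta})$, with unique maximum $M < \pi + \delta$ at $p$, such that the level sets $\{H = c\}$ for $c \in (0, M)$ are simple closed ``bean-shaped'' curves surrounding $p$ and straddling $\partial B^2(1)$. By arranging for $H$ to be reflection-symmetric across $\partial B^2(1)$ in a neighbourhood of $p$, each bean is divided by $\partial B^2(1)$ into two congruent arcs of equal flow-time duration: an ``inside'' arc lying in $B^2(1)$ and an ``outside'' arc lying in $B^2(\sqrt{2+2\delta}) \setminus B^2(1)$. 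By the classical formula $T(c) = |dA/dc|$, where $A(c)$ denotes the area of $\{H > c\}$, choosing $H$ so that $A(c) = 2(M-c)$ makes every bean a period-$2$ orbit with each arc traversed in time exactly one. This forces $A(0) = 2M < 2\pi + 2\delta$, which fits comfortably inside the available region of area $2\pi + 2\pi\delta$.

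With this design, the displacement at $t=1$ is automatic: each point $q \in B^2(1) \cap \mathrm{supp}(H)$ lies on the inside arc of its bean and flows in unit time to the corresponding point on the outside arc, giving $\phi^1_H(B^2(1)) \cap B^2(1) = \emptyset$. For the intermediate containment, I would design the outside arcs so that their outward reach in the $I$-coordinate grows monotonically along the flow direction; this ensures that the maximum value of $I$ attained on $\phi^t_H(B^2(1))$ grows linearly with $t$, matching the required bound after an appropriate rescaling of the profile. The main obstacle is to arrange the bean geometry so that the period condition (an area relation), the reflection symmetry, and the monotonic outward extension all hold simultaneously. The tightness of $M < \pi + \delta$ relative to the Hofer displacement energy $\pi$ of $B^2(1)$ leaves little flexibility, and a careful smoothing near $p$, near $\partial B^2(1)$, and near the origin $\{I=0\}$ (where the action-angle coordinates are singular) will be required to verify all conclusions with quantitative estimates on the bean profiles.
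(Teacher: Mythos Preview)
Your plan is quite different from the paper's and considerably more delicate than it needs to be. The paper does not attempt to design $H$ directly on the disc. Instead it works first on a rectangle: take $K(x,y)=\sqrt{\pi+\delta}\,x$ on $[0,\sqrt{\pi+\delta}]^2$, whose flow is the vertical translation $(x,y)\mapsto(x,y+t\sqrt{\pi+\delta})$. For any $U$ compactly contained in the square one has $\phi^t_K(U)\subset[0,\sqrt{\pi+\delta}]\times[0,(1+t)\sqrt{\pi+\delta}]$ and $\phi^1_K(U)\cap U=\emptyset$, and cutting $K$ off gives $\hat K$ with $\max\hat K<\pi+\delta$. One then invokes a symplectomorphism $\psi$ of $\R^2$ (of the ``square--peg'' type in Schlenk's book) sending the rectangle into $B^2(\sqrt{2+2\delta})$ and, for each $t$, sending a large part of $[0,\sqrt{\pi+\delta}]\times[0,(1+t)\sqrt{\pi+\delta}]$ onto a centred disc. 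Setting $U=\psi^{-1}(B^2(1))$ and conjugating $\hat K$ by $\psi$ gives $H$ with all the stated properties at once. The whole argument is three lines of genuine content; the containment in the $t$--dependent ball is immediate because it is pulled back from containment in a $t$--dependent rectangle.

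Your bean construction, by contrast, has a real gap. You assert reflection symmetry of $H$ across $\partial B^2(1)$ only ``in a neighbourhood of $p$'', but then use it to conclude that \emph{every} level set is bisected by $\partial B^2(1)$ into arcs of equal flow time. That conclusion requires the symmetry on the whole support, not just near $p$; for the large beans (small $c$) you have given no mechanism forcing the inside arc to take exactly time~$1$. Relatedly, you argue displacement only for points in $B^2(1)\cap\mathrm{supp}(H)$, so you implicitly need $B^2(1)\subset\{H>0\}$; that forces the outermost inside arc to sweep all of $B^2(1)$, far beyond any neighbourhood of $p$. A global area--preserving reflection across a circle does exist in $(I,\theta)$ coordinates (namely $I\mapsto 1-I$), but it is anti--symplectic, so invariance of $H$ under it does not directly give equal flow times on the two arcs, and making this precise while also meeting the $t$--dependent containment and the smoothing at $I=0$ is exactly the ``main obstacle'' you flag but do not resolve. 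The paper's conjugation trick sidesteps all of this.
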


\begin{proof}


Let $U$ be any set whose closure is contained in the interior of the square $[0, \sqrt{\pi + \delta}] \times [0, \sqrt{\pi + \delta}] \subset \R^2.$ The time-$t$ Hamiltonian flow of the function
$K(x, y) = (\sqrt{\pi + \delta})x$ on $\R^2$ is given by $$\phi^t_K(x,y) = (x, y+ t \sqrt{\pi + \delta}).$$ (We use the convention that the Hamiltonian vectorfield,  $X_K$,  of $K$ is defined by the equation $i_{X_K} \omega_0 = -dK$.)
Hence, for all $t>0$ the set $\phi^t_K(U)$ is contained in the interior of the rectangle
$[0, \sqrt{\pi + \delta}] \times [ 0, (1+t)\sqrt{\pi + \delta}] $ and $\phi^1_K(U) \cap U =\emptyset$.
 Cutting $K$ off appropriately near $\bigcup_{t \in [0,1]} \phi^t_K(U)$, we get a nonnegative function $\hat{K}$ whose Hamiltonian flow still has these properties, but is now supported in $[0, \sqrt{\pi + \delta}] \times [ 0, 2\sqrt{\pi + \delta}] $ and satisfies
$\max(\hat{K}) < \pi +\delta$.

One can construct a symplectic diffeomorphism $\psi$ of $\R^2$ which maps $[0, \sqrt{\pi + \delta}] \times [ 0, 2\sqrt{\pi + \delta}] $ into $B^2(\sqrt{2+2\delta})$ and for $t \in [0,1]$ maps arbitrarily large subsets of each
rectangle $[0, \sqrt{\pi + \delta}] \times [ 0, (1+t)\sqrt{\pi + \delta}]$ onto balls centered at the origin. (Such maps are described and illustrated explicitly in Section $3.1$ of \cite{schl}.) We choose these arbitrarily
large subsets of the rectangles $[0, \sqrt{\pi + \delta}]\times [ 0, (1+t)\sqrt{\pi + \delta}]$ so that they contain $\phi^t_K(U)$
for all $t \in [0,1]$. Setting $U = \psi^{-1}(B^2(1))$ and $H = \hat{K} \circ \psi$, we are done.
\end{proof}

\begin{remark}
\label{error1}
It is clear from the definition of $H$ in terms of $\hat{K}$ that for all $t\in [0,1]$ the distance between $\phi^t_H(B^2(1))$ and the boundary of $B^2(\sqrt{(1+t)(1 + \delta/\pi)})$ is greater than zero and of order $\delta$.
\end{remark}

Consider an immersion $i_{\delta}$ of $\Sigma(\epsilon)$ into $\R^2$, as sketched in
Figure $1$, with the following properties:
\begin{itemize}
\item the double points are concentrated in an
arbitrarily small region around the origin $(0,0)$.
\item  the vertical and horizontal sections
crossing at $(0,0)$ are arbitrarily thin (and hence arbitrarily long if need be).
\item the areas of the regions $A$ and $B$ are both equal to $\pi+2\delta$.
\end{itemize}
By the last of these properties we may assume that, for sufficiently small $\epsilon>0$,  the immersion lies
 in a region symplectomorphic to
$B^2(\sqrt{2 + 2\delta})$.

\begin{figure}[h]
\label{torus1}
\begin{center}
\psfrag{B}[][][0.8]{$B$}
\psfrag{A}[][][0.8]{$A$}
\psfrag{C}[][][0.8]{$C^{\Lambda}$}
\psfrag{(0,0)}[][][0.8]{$(0,0)$}
\includegraphics{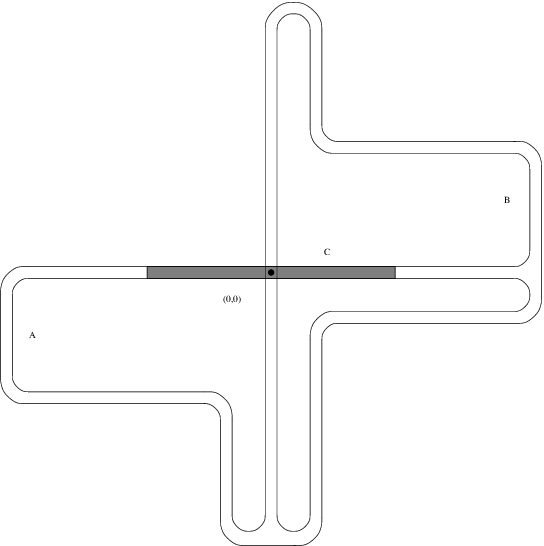} \caption{The symplectic immersion $i_{\delta}$ of the
punctured torus.}
\end{center}
\end{figure}

Let $I^0_{\delta}$ be the symplectic immersion of $\Sigma(\epsilon) \times B^2(1)$ into $B^2(\sqrt{2+2\delta})\times B^2(\sqrt{2+2\delta})$ which acts as $i_{\delta}$ on the first factor and as inclusion on the second.
We now alter the image of $I^0_{\delta} $ to obtain the desired embedding. In what follows, we will use coordinates $(x_1,y_1)$ on the plane containing the first copy of  $B^2(\sqrt{2+2\delta})$
 and coordinates $(x_2,y_2)$ on the plane  containing the second copy. The projection from $\R^4$ to the $x_1y_1$-plane will be denoted by $pr_1.$

The self-intersections of $I^0_{\delta}$ project under $pr_1$ to the self intersections of $i_{\delta}$.  The $x_1$-coordinates of these points take values in an interval of the form $[-\lambda, \lambda]$.
For $\Lambda> \lambda$, let  $C^{\Lambda}$ be the horizontal portion of the image of $i_{\delta}$ which  passes through the
 origin and whose first coordinates satisfy $x_1 \in [-\Lambda,\Lambda]$.
To remove the intersections of $I^0_{\delta}$,
we consider the Hamiltonian $\hat{H}= \chi(x_1)H(x_2,y_2)$ where  $H$ is the Hamiltonian from Lemma \ref{push} and $\chi$
is a bump-function which equals $1$ for $|x_1| \leq \lambda$ and equals $0$ for $|x_1| \geq \Lambda$.  The  time-$1$ Hamiltonian flow of $\hat{H}$ is
 $$\phi_{\hat{H}}(x_1,y_1,x_2,y_2)=(x_1, y_1 + \chi'(x_1) H(x_2,y_2), \phi^{\chi(x_1)}_{H}(x_2, y_2))
.$$ Let $I^1_{\delta}$ be the symplectic immersion of
  $\Sigma(\epsilon) \times B^2(1)$ into $B^2(\sqrt{2+2\delta})\times B^2(\sqrt{2+2\delta})$ obtained by
applying  $\phi_{\hat{H}}$ to $C^{\Lambda} \times B^2(1).$
Clearly, $I^1_{\delta}$ agrees with $I^0_{\delta}$ away from $i_{\delta}^{-1}(C^{\Lambda}) \times B^2(1)$, and shares none of the original double points of $I^0_{\delta}$. The immersion $I^1_{\delta}$ can only have new  double points in $\phi_{\hat{H}}(C^{\Lambda}_{\pm} \times B^2(1))$
where  $C^{\Lambda}_+$  and $C^{\Lambda}_-$ are the portions of $C^{\Lambda}$  corresponding to points with $x_1$-values in $[\lambda, \Lambda]$ and  $[-\Lambda,-\lambda]$,
respectively.
We now show that, for an appropriate choice of $i_{\delta}$ and $\chi$,  $I^1_{\delta}$ can be adjusted on $\phi_{\hat{H}}(C^{\Lambda}_{\pm} \times B^2(1))$ so that no new double points occur.

In the $x_1y_1$-plane, $\phi_{\hat{H}}$ only moves  points in $C^{\Lambda}_{\pm}$ and does so only in the $y_1$--direction.
Moreover, the maximum displacement in this direction  is bounded from above by $$\left|\int_0^1 \chi'(x_1) \max(H)\, dx_1\right| < \max(|\chi'|)(\pi + \delta).$$
Hence, the image of $\phi_{\hat{H}}(C^{\Lambda}_{+} \times B^2(1))$ under $pr_1$ is contained in the set $$C^{\Lambda}_+  + \left([\lambda, \Lambda] \times \left[0, \max(|\chi'|)(\pi+\delta)\right]\right),$$
 and the projection of $\phi_{\hat{H}}(C^{\Lambda}_{-} \times B^2(1))$ is contained in  $$C^{\Lambda}_-  + \left( [-\Lambda, -\lambda] \times \left[ -\max(|\chi'|)(\pi+\delta), 0\right]\right).$$
Choosing the width of $C^{\Lambda}$ to be sufficiently small,  and $\max(|\chi'|)$ sufficiently close to $ \frac{1}{\Lambda - \lambda}$,
we may assume that both $\phi_{\hat{H}}(C^{\Lambda}_{\pm} \times B^2(1))$ project to regions in the $x_1y_1$-plane whose
area is  less than $\pi + 2\delta$ and hence  less than the area of each of the regions
$A$ and $B$. Acting on $\phi_{\hat{H}} (C^{\Lambda}_{-} \times B^2(1))$ and  $\phi_{\hat{H}} (C^{\Lambda}_{+} \times B^2(1))$
by another symplectic diffeomorphism which acts nontrivially only in the $x_1y_1$--directions, we may then ensure that  they are mapped by $pr_1$ into $A$ and $B$, respectively.
The resulting symplectic immersion therefore has no double points and is the
desired embedding of Theorem \ref{embed}.

\subsection{The proof of Theorem \ref{embed2}}

Scaling things appropriately, the argument  above yields  a symplectic embedding of $\Sigma(\epsilon) \times B^2(r)$ into $B^2(\sqrt{2}) \times B^2 (\sqrt{2})$ for any $r<1$ provided that $\epsilon$ is sufficiently small.
In this section, we show that the previous embedding procedure can be refined to obtain
a symplectic embedding of  $\Sigma(\epsilon) \times B^2(r)$  into $B^4(\sqrt{3})$. This will prove Theorem \ref{embed2} which implies that Theorem \ref{thm} is sharp.

\begin{remark}
The bi-disc $B^2(\sqrt{2}) \times B^2 (\sqrt{2})$ can be symplectically embedded into $B^4(2)$, by inclusion. The second Ekeland-Hofer capacity implies that this is optimal in the sense that $B^2(\sqrt{2}) \times B^2 (\sqrt{2})$ can not be symplectically embedded into a smaller ball.
\end{remark}

As in the proof of Theorem \ref{embed}, we start with a symplectic immersion $I^0$ of  $\Sigma(\epsilon) \times B^2(r)$ into $B^2(\sqrt{2}) \times B^2 (\sqrt{2})$ which acts by an immersion $i \colon \Sigma(\epsilon) \hookrightarrow B^2(\sqrt{2})$ in the first factor, and by inclusion on the second factor. The immersion $i$ is chosen so that for some $\lambda >0$ we have:
\begin{itemize}
\item the vertical and horizontal crossing portions of the image have length equal to $2\pi /\lambda$.
\item the regions $A$ and $B$ have areas in the interval $(\pi r^2, \pi)$ and  all but an arbitrarily small amount of this area is concentrated within a distance $\lambda$ of the horizontal crossing portion, $C$.
\end{itemize}
\noindent See Figure $2$.

\begin{figure}[h]
\label{distance}
\begin{center}
\psfrag{B}[][][0.8]{$B$}
\psfrag{A}[][][0.8]{$A$}
\includegraphics{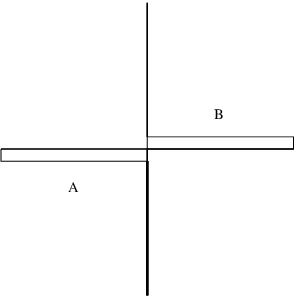} \caption{The symplectic immersion $i$, from a distance.}
\end{center}
\end{figure}

Set $\Lambda = \pi / \lambda$, so that $C^{\Lambda}=C$. Defining $\chi$ and $\hat{H}$ as in the proof of Theorem \ref{embed}, we remove the double points of $I^0$  by
applying the time-$1$
flow of $\hat{H}$ to $C\times B^2(r)$ to obtain a new immersion $I^1$. Choosing $\lambda$ and  $\left(\max(|\chi'|) - \frac{1}{\Lambda-\lambda}\right)$ to be sufficiently small, we may assume that the projection
$pr_1$ maps  $\phi_{\hat{H}}(C\times B^2(r))$ to the interior of the following  region of the $x_1y_1$-plane
\begin{equation*}
\label{region}
\mathbf{C}=C + \left\{([-\pi/\lambda, -\lambda] \times \left[ -\lambda, 0\right]) \cup ([\lambda,\pi/\lambda] \times \left[0, \lambda \right]) \right\}.
\end{equation*}
When the width of $C$ is small enough, the area of $\mathbf{C}$ is less than $2\pi$. As in the proof of Theorem \ref{embed} we can then apply a suitable symplectic map to $\phi_{\hat{H}}(C\times B^2(r))$
to shift the relevant parts of its projection into $A$ and $B$ and hence obtain a symplectic embedding of $\Sigma(\epsilon) \times B^2(r)$ into $B^2(\sqrt{2}) \times B^2 (\sqrt{2})$.

We now refine this embedding procedure by choosing a new immersion of $\Sigma(\epsilon)$. We  begin by analyzing the fibres of the projection map $pr_1$ acting on
$I^1(\Sigma(\epsilon) \times B^2(r))$. The  points of
$I^1(\Sigma(\epsilon) \times B^2(r))$ not in $\phi_{\hat{H}}(C\times B^2(r))$ belong to  fibres which can all be identified with  $B^2(r)$. The points in $\phi_{\hat{H}}(C\times B^2(r))$ belong to fibres of $pr_1$  determined
by the $x_1$-component
of their projections.  That is, the fibres of $pr_1$ corresponding to a fixed value of $x_1$ can all be included in a fixed subset of
$B^2(\sqrt{2})$, which we denote  by $F(x_1)$.  For $|x_1|\leq \lambda$, $F(x_1)$ is a fixed subset of the interior of $B^2(\sqrt{2})$. For  $\lambda < |x_1| \leq \pi/ \lambda$, each $F(x_1)$ is contained in the interior of
$B^2(\sqrt{1+\chi(x_1)})$ (see Lemma \ref{push}). We can choose the bump function $\chi(x_1)$ so that on $[-\pi/\lambda, -\lambda] \cup [\lambda, \pi/\lambda]$ it is arbitrarily $C^0$-close to the function $x_1 \mapsto 1- \frac{|x_1|-\lambda}{\pi/\lambda -\lambda}$. For all sufficiently small $\lambda>0$ we may then assume that  $F(x_1)$ is contained in the interior of  $B^2(\sqrt{2 -|x_1| \lambda/\pi})$ for all $x_1 \in [-\pi/\lambda, \pi/\lambda]$. By Remark \ref{error1},  the distance from $F(x_1)$ to the  boundary of $B^2(\sqrt{1+\chi(x_1)})$ is bounded from below by a positive constant which is independent of $\lambda$ and which goes to zero as $r$ approaches $\sqrt{2}$. For
$\chi$ as above, and $\lambda$ sufficiently small, we may assume the same is true of the   distance from $F(x_1)$ to the  boundary  of  $B^2(\sqrt{2 -|x_1| \lambda/\pi})$. In this case we denote the lower bound for this distance by
$D_r$.

%

%




We now apply a symplectomorphism $w$  to the
$x_1y_1$-plane which  winds $\mathbf{C}$
around itself as sketched in Figure $3$.
\begin{figure}[h]
\begin{center}
\label{twist}
\psfrag{C}[][][0.8]{$\mathbf{C}$}
\psfrag{w(C)}[][][0.8]{$w(\mathbf{C})$}
\includegraphics{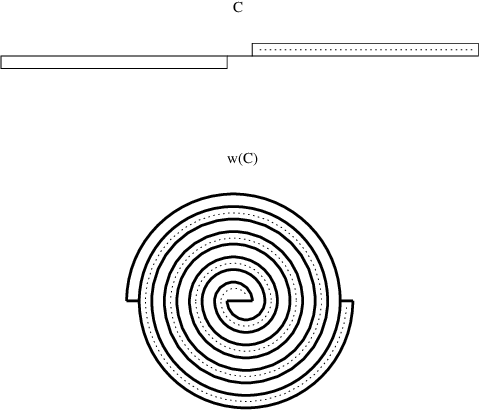} \caption{The winding map $w$ acting on $\mathbf{C}$.}
\end{center}

\end{figure}
This winding is nearly tight  but includes small  gaps (represented by the thicker lines in Figure $3$) to accommodate the winding of the
rest of $i(\Sigma(\epsilon))$.  Since the area of $\mathbf{C}$ is less than $2\pi$, for sufficiently small $\epsilon>0$
we may assume that the image of $i(\Sigma(\epsilon)) \cup \mathbf{C}$ under the winding map still lies in the ball $B^2(\sqrt{2})$.
Replacing the embedding $i$ in the previous construction with the
composition $w\circ i$, we get a new symplectic embedding $$I_w  \colon \Sigma(\epsilon) \times B^2(r) \hookrightarrow B^2(\sqrt{2}) \times B^2 (\sqrt{2}).$$
We now show that the image of $I_w$ is contained in
$B^4(\sqrt{3})$.

Let $(z_1, z_2) \in \C^2 \equiv \mathbb{R}^4$ be any point in the image of $I_w$. Then $z_1= w(x_1, y_1)$ for a unique point $(x_1, y_1)$ in $i(\Sigma(\epsilon))$
and $z_2$ belongs to $F(x_1)$.  Since $w(i(\Sigma(\epsilon)) \cup \mathbf{C}) \subset  B^2(\sqrt{2})$ we have
\begin{equation}
\label{1}
|z_1|< \sqrt{2}.
\end{equation}
By the analysis of the fibres  $F(x_1)$ above, we have
\begin{equation}
\label{error2}
|z_2| \leq \sqrt{2 -|x_1| \lambda/\pi}- D_r.
\end{equation}
On the other hand it follows from the definition  of the winding map $w$  that
\begin{equation}
\label{2}
|z_1| = \sqrt{\frac{2\lambda}{\pi }|x_1|} + O(\lambda) + O(\epsilon \lambda).
\end{equation}
The first approximation here comes from equating the area of the  portion of $C$ determined by $x_1$, $2|x_1| \lambda$,
with  $\pi |z_1|^2.$  The  error terms of \eqref{2} correspond, respectively,  to the discrepancy caused by the width of $\mathbf{C}$, and the discrepancy caused by the gap in the winding.

Together, equations \eqref{error2} and \eqref{2} yield
\begin{equation*}
\label{ }
(|z_2| + D_r)^2 +\frac{1}{2} (|z_1| - O(\lambda) - O(\epsilon \lambda))^2 \leq 2
\end{equation*}
The fact that $D_r$ is positive and independent of  $\lambda$, for sufficiently small $\lambda>0$, implies that
\begin{equation}
\label{3}
|z_2|^2   +  \frac{1}{2} |z_1|^2 \leq  2.
\end{equation}
Together, inequalities  \eqref{1} and  \eqref{3} then yield
\begin{equation*}
\label{ }
|z_1|^2 + |z_2|^2 \leq 3,
\end{equation*}
as desired.


\begin{thebibliography} {99}

\bibitem{bo}
F. Bourgeois,
A Morse-Bott approach to contact homology, PhD Thesis, Stanford University, 2002.

\bibitem{BEHWZ}
F. Bourgeois, Y. Eliashberg, H. Hofer, K. Wysocki, and E. Zehnder,
 Compactness results in symplectic field theory,
{\em Geom. Topol.}, \textbf{7} (2003), 799--888.

\bibitem{bormon} F. Bourgeois and K. Mohnke, Coherent orientations in symplectic field theory, {\em Math. Z.}, 248 (2004), no. 1, 123--146.

\bibitem{ciemon} K. Cieliebak and K. Mohnke, Compactness for punctured holomorphic curves, Conference on Symplectic Topology, {\em J. Symplectic Geom.}, 3 (2005), no. 4, 589--654.


\bibitem{eh1} I. Ekeland and H. Hofer, Symplectic topology and
Hamiltonian dynamics, {\it Math. Z.}, \textbf{200} (1989), 355--378.

\bibitem{ekehof} I. Ekeland and H. Hofer, Symplectic topology and
Hamiltonian dynamics II, {\it Math. Z.}, \textbf{203} (1990), 553--567.



\bibitem{EGH} Y. Eliashberg, A. Givental and H. Hofer, Introduction to symplectic field theory, GAFA 2000 (Tel Aviv, 1999), {\it Geom. Funct. Anal.}, 2000, Special Volume, Part II, 560--673.


\bibitem{gr} M. Gromov, Pseudo-holomorphic curves in symplectic manifolds, {\it Inv. Math.}, \textbf{82} (1985), 307--347.

\bibitem{guth} L. Guth, Symplectic embeddings of polydisks,
{\it Inv.  Math}, \textbf{172} (2008), 477--489.




\bibitem{hls} H. Hofer, V. Lizan and J.-C. Sikorav, On genericity of holomorphic curves in $4$-dimensional almost complex manifolds, {\it J. Geom. Anal.}, \textbf{7} (1997), 149--159.

\bibitem{hofa} H. Hofer, K. Wysocki and E. Zehnder, Properties of pseudoholomorphic curves in symplectisations I: Asymptotics, {\it Ann. Inst. H. Poincar\'{e} Anal. Non Lineaire}, \textbf{13} (1996) , 337--379.

\bibitem{hofi} H. Hofer, K. Wysocki and E. Zehnder, Properties of pseudoholomorphic curves in symplectisations II: Embedding controls and algebraic invariants, {\it Geom. Funct. Anal.}, \textbf{5} (1995),  337--379.





\bibitem{msa} D. McDuff and D. Salamon, $J$-holomorphic curves and
symplectic topology. American Mathematical Society Colloquium
Publications, 52. American Mathematical Society, Providence, RI,
2004.

\bibitem{pn} \'A. Pelayo, S. V\~u Ngoc, Sharp symplectic embeddings of cylinders, Preprint 2013, arXiv:1304.5250.

\bibitem{rs} J. Robbin and D. Salamon, The Maslov index for paths, {\it Topology}, \textbf{32} (1993), 827--844.


\bibitem{schl} F. Schlenk, Embedding problems in symplectic geometry
De Gruyter Expositions in Mathematics 40. Walter de Gruyter Verlag, Berlin. 2005.




\bibitem{tr} L. Traynor, Symplectic packing constructions, \emph{J. Diff. Geom.}, \textbf{42} (1995), 411--429.

\bibitem{wendl} C. Wendl, Automatic transversality and orbifolds
of punctured holomorphic curves in dimension four,  {\it Comment. Math. Helv.}, \textbf{85} (2010), 347--407.
\end{thebibliography}
\end{document}